\theoremstyle{definition}
\newtheorem{thm}{Theorem}[section]
\theoremstyle{definition}
\newtheorem{lem}[thm]{Lemma}
\theoremstyle{definition}
\newtheorem{defn}[thm]{Definition}
\theoremstyle{definition}
\newtheorem{prop}[thm]{Proposition}
\theoremstyle{definition}
\newtheorem{rmk}[thm]{Remark}
\theoremstyle{definition}
\newtheorem{cor}[thm]{Corollary}
\theoremstyle{definition}
\newtheorem{exm}[thm]{Example}
\newcommand{\twoheadrightarrowtail}{\mathrel{\mathrlap{\rightarrowtail}}\mathrel{\mkern2mu\twoheadrightarrow}}
\newcommand{\opc}{\raisebox{\depth}{\(\ocircle\)}}
\newcommand{\crefdefpart}[2]{%
  \hyperref[#2]{\namecref{#1}~\labelcref*{#1}(\ref*{#2})}%
  }
\newcommand{\tricref}[3]{%
  \hyperref[#3]{\namecref{#1}~\labelcref*{#1}(\ref*{#3})}%
  }
\newcommand{\crefpart}[3]{%
  {\namecref{#1}~\labelcref*{#1}(\labelcref{#2},\labelcref{#3})}%
  }
\crefname{lem}{Lemma}{Lemmas}
\crefname{prop}{Proposition}{Propositions}
\crefname{thm}{Theorem}{Theorems}
\title{The Monadic Grzegorczyk Logic}
\author{G. Bezhanishvili}
\address{New Mexico State University}
\email{guram@nmsu.edu}
\author{M. Khan}
\address{New Mexico State University}
\email{mashyk@nmsu.edu}
\subjclass[2020]{03B45, 06E25, 06E15, 03C90, 03F45}
\keywords{Modal logic, predicate 
logic, monadic fragment, Grzegorczyk logic, 
finite model property}
\date{}
\begin{document}

\begin{abstract}
    %
    We develop a semantic criterion 
    for determining whether a given monadic modal logic axiomatizes the one-variable fragment of a predicate modal logic. 
    We show that the criterion applies to the monadic Grzegorczyk logic \textbf{MGrz}, thus
    establishing that \textbf{MGrz} axiomatizes the one-variable 
    fragment of the predicate Grzegorczyk logic \textbf{QGrz}. This we do by proving the finite model property 
    of \textbf{MGrz}, which is achieved by strengthening the notion of a maximal point of a descriptive \textbf{MGrz}-frame and by refining the existing selective filtration methods. 
\end{abstract}

\maketitle

\tableofcontents


\section{Introduction}


The Grzegorczyk logic \textbf{Grz} plays a fundamental role in the study of the G\"{o}del translation. Indeed, it is the largest modal companion of 
the intuitionistic propositional calculus \textbf{IPC}, and the lattice of extensions of \textbf{IPC} is isomorphic to the lattice of normal extensions of \textbf{Grz} (see, e.g., \cite[Sec.~9.6]{CZ}). 


Our understanding of the G\"{o}del translation in the predicate case is much more limited. One reason for this is the lack of adequate semantics for predicate logics. Indeed, predicate modal logics are rarely Kripke complete \cite{SG}, and we also have frequent incompleteness with respect to the more general Kripke bundle semantics \cite{isoda, nagaoka}. For example, letting \textbf{QL} denote the predicate extension of a normal modal logic \textbf{L}, we have that \textbf{QS4} is Kripke complete (see, e.g., \cite[Thm.~15.3]{H&G}), while 
\textbf{QGrz} is not even Kripke bundle complete \cite{isoda, nagaoka}. 

It is known \cite{ras-sik} (see also  \cite[p.~156]{GSS}) 
that \textbf{QS4} is a modal companion of the  intuitionistic predicate calculus \textbf{IQC}. There have been claims in the literature that \textbf{QGrz} is also a modal companion of \textbf{IQC} \cite{pan89}, but that it is not the largest such \cite{nau91}. However, the proofs rely on the Flagg-Friedman translation \cite{flagg}, which turned out not to be faithful \cite{ino92}. Thus, these claims need to be reexamined (see \cite[Rem.~2.11.13]{GSS} 
and \cite{GLfe}).


The difficulties with predicate logics can often be avoided by restricting to fragments with one fixed variable, which we refer to as {\em monadic fragments}. On the one hand, such fragments minimally capture the complexity of modal quantification. On the other, they can be viewed as propositional modal logics with two modal operators (the second being an $\textbf{S5}$-modality  capturing the monadic quantification), and hence are amenable to the standard semantic tools of modal logic.  



The study of the monadic fragment of the classical predicate calculus was initiated by Hilbert and Ackermann \cite{hilbertackermann}. Wajsberg \cite{wajsberg} showed that this fragment is axiomatized by 
the modal logic \textbf{S5}. Prior \cite{prior} introduced the monadic intuitionistic propositional calculus \textbf{MIPC}, and Bull \cite{bull} proved that it axiomatizes the monadic fragment of \textbf{IQC}. For further results in this direction see
\cite{ono,onoszk,szk,BV1}. 
Monadic fragments of predicate modal logics were studied in \cite{FS,esa,japaridze88,japaridze,GBm}. 
In particular, Fischer Servi \cite{FS} introduced \textbf{MS4} (monadic \textbf{S4}) and showed that the G\"{o}del translation embeds \textbf{MIPC} into \textbf{MS4} faithfully, while Esakia \cite{esa} introduced \textbf{MGrz} (monadic \textbf{Grz}) and proved the same for \textbf{MGrz}. In the same paper, Esakia introduced \textbf{MGL} (monadic G\"{o}del-L\"{o}b logic),
and Japaridze \cite{japaridze88, japaridze} proved that Solovay's embedding \cite{solovay} of \textbf{GL} into Peano Arithmetic extends to \textbf{MGL}.  This was done by establishing the finite model property (fmp for short) of \textbf{MGL},  but the same question for \textbf{MGrz} remained open. 

Using predicate Kripke frames, Ono and Suzuki \cite{onoszk} developed a semantic criterion to determine when a given monadic intuitionistic logic axiomatizes the monadic fragment of a given predicate intuitionistic logic. This was further generalized by Suzuki \cite{szk} to Kripke bundles. We generalize this criterion to the setting of monadic modal logics (see \Cref{translation}). Among other things, this yields that \textbf{MS4} is the monadic fragment of \textbf{QS4}, 
and likewise that \textbf{MGrz} is the monadic fragment of \textbf{QGrz} (see \cite{FS,esa}, but note that full proofs were not given in these papers). The latter we do by establishing that \textbf{MGrz} has the 
fmp, which is our main result  and resolves in the positive the 
issue of completeness of $\textbf{MGrz}$ 
(see \cite{esa}). 
This result, in conjunction with our semantic criterion in \Cref{translation}, shows that the monadic fragment of $\textbf{QGrz}$ is Kripke bundle complete, in spite of the fact that the full predicate logic $\textbf{QGrz}$ is not. 

To establish our main result, we use a modified form of selective filtration---a standard technique in modal logic for proving completeness and decidability. For instance, selective filtration can be used to show that \textbf{GL} and \textbf{Grz} have the fmp (see, e.g., \cite[pp.~150--152]{CZ}). Our approach combines the methods of \cite{Grefe} and \cite{GBm}. More specifically, the selection method in \cite{Grefe} was used to show that Fischer Servi's  intuitionistic modal logic 
has the fmp (see also \cite[Thm.~10.19]{mdim}). This method hinges on the existence of certain maximal points, 
which are selected in the construction. In \cite{GBm}, this method was refined 
to prove that the logics $\textbf{M}^+\textbf{IPC}$ and $\textbf{M}^+\textbf{Grz}$ have the fmp (see \Cref{conclusion} for relevant definitions).  The importance of these logics stems from the fact that they embed faithfully into \textbf{MGL} \cite{GBm}, which admits a provability interpretation by Japaridze's theorem 
\cite{japaridze88,japaridze}. 
%
%


The construction we propose here is a further refinement of the one in \cite{GBm}.  
The key new notion is that of a strongly maximal point (see \Cref{strongly_maximal}), the existence of which is ensured by sharpening the well-known Fine-Esakia principle for \textbf{Grz} (see \Cref{fe_principle,smax}).
We show that our technique specializes to yield  the fmp for both $\textbf{M}^+\textbf{Grz}$ and $\textbf{MGL}$. The details are outlined in \Cref{conclusion}, together with further examples, which suggest that an appropriate modification of the construction could be used for a variety of other cases as well.

We conclude the introduction by a brief discussion of the organization of the paper. 
We assume 
familiarity with modal logic (see, e.g., \cite{CZ}), topology (see, e.g., \cite{engelking}), and category theory (see, e.g., \cite{maclane}).  In \Cref{predicate}, we provide a brief introduction to predicate modal logics (pm-logics for short), and in \Cref{monadic} to monadic modal logics (mm-logics for short). We also introduce the appropriate semantics for each: 
Kripke bundles for pm-logics and monadic Kripke frames 
for mm-logics. In \Cref{fnb}, we prove that there is an equivalence between the categories of Kripke bundles and monadic Kripke frames.
In \Cref{correspondence}, we develop a criterion 
for establishing whether a given mm-logic is the monadic fragment of a given pm-logic. In \Cref{MGrz}, we concentrate on \textbf{MGrz}, and provide an overview of its algebraic and descriptive frame semantics. The novelty of this section is in the introduction of strongly maximal points. These points are the crucial ingredients of our selection method, which is developed in \Cref{construction} to prove the fmp for \textbf{MGrz}. Finally, in \Cref{conclusion}, we provide concluding remarks, as well as further observations and open questions. 

\section{Predicate modal logic}{\label{predicate}}


Let $\mathcal{L}_Q$ be a predicate modal language with a countable set of individual variables $V$ and a countable set of predicates $\Sigma$. There are no function symbols or individual constants in $\mathcal{L}_Q$. We take $\neg$, $\vee$, $\exists$, and $\Diamond$ as our basic connectives and treat $\wedge$, $\to$, $\forall$, and $\Box$ as standard abbreviations. 
Let $\text{Form}(\mathcal{L}_Q)$ be the set of all formulae in $\mathcal{L}_Q$. 

For each propositional modal logic $\textbf{L}$, let $\textbf{QL}$ be the least predicate extension of $\textbf{L}$.   
We recall \cite[Def.~2.6.1]{GSS} that $\textbf{QL}$ is the least set of formulae in $\mathcal{L}_Q$ containing
\begin{itemize}
    \item the axioms of the classical predicate calculus,
    \item the theorems of $\textbf L$,
\end{itemize}
and closed under the inference rules

\begin{center}
    \renewcommand{\arraystretch}{2}
    \begin{tabular}{l  l}
        $\dfrac{\varphi, \varphi\to\psi}{\psi}$ & (Modus Ponens),\\
        $\dfrac{\varphi}{\Box\varphi}$ & (necessitation),\\
        $\dfrac{\varphi}{\forall x \varphi}$ & (generalization),\\
        $\dfrac{\varphi}{\varphi[\psi/P(\Vec{x})]}$ & (uniform substitution).\\
    \end{tabular}
\end{center}


Here, $P\in\Sigma$, $\Vec{x}=(x_1,\dots,x_n)$, and $\varphi[\psi/P(\Vec{x})]$ stands for the formula obtained by replacing all occurrences of $P(\Vec{y})$ in $\varphi$ with $\psi[\Vec{y}/\Vec{x}]$ so that the free variables in $\psi[\Vec{y}/\Vec{x}]$ are not
bounded after substitution. For a rigorous treatment of this rule, we refer to \cite[Sec.~2.5]{GSS}.

In particular, $\textbf{QK}$ is the least predicate extension of $\textbf K$, $\textbf{QS4}$ is the least predicate extension of $\textbf{S4}$, etc.

\begin{defn} \cite[Def.~2.6.1]{GSS}
A {\emph{predicate modal logic}} or simply a {\emph{pm-logic}} 
is any set of formulae of $\mathcal{L}_Q$ containing \textbf{QK} and closed under the above inference rules.
\end{defn}

\begin{rmk}
    The formula $\exists x\Diamond \varphi\to\Diamond\exists x\varphi$ is derivable in any pm-logic (see, e.g., \cite[Lem.~2.6.18]{GSS}).
\end{rmk}

Since the standard Kripke semantics is often inadequate for 
pm-logics (see, e.g., \cite[Thm.~7.3]{SG}), we will work with the more general Kripke bundle semantics, and view predicate Kripke frames as special Kripke bundles. For this, we first recall the notions of a Kripke frame and a p-morphism (see, e.g., \cite[Defs.~1.19,~2.10]{venema}). 




\begin{defn}\ \label{basics}
    \begin{enumerate}
        \item A {\em Kripke frame} is a pair $\mathfrak F = (X,R)$, where $X$ is a nonempty set and $R$ is a binary relation on $X$. For $x\in X$, let $R[x] = \{ y \in X \mid x \mathrel{R} y \}$. \label{basics1}
        \item A {\em p-morphism} between two Kripke frames $\mathfrak F = (X,R)$ and $\mathfrak F' = (X',R')$ is a map $f\colon X \to X'$ such that $f(R[x])=R'[f(x)]$ for each $x\in X$. 
    \end{enumerate}
\end{defn}

%
%

\begin{defn}\ {\label{kb}}
\begin{enumerate}
    \item \cite[Def.~5.2.5]{GSS} A {\em Kripke bundle} is a triple $\mathfrak{B}=(\mathfrak{F},\pi,\mathfrak{F}_0)$, where $\mathfrak{F}=(X,R)$ and $\mathfrak{F}_0=(X_0,R_0)$ are Kripke frames and $\pi\colon \mathfrak{F}\twoheadrightarrow \mathfrak{F}_0$ is an onto p-morphism. \label{kb1}
    \item \cite[Def.~5.4.1]{GSS} A {\em morphism} between Kripke bundles $\mathfrak B = (\mathfrak{F},\pi,\mathfrak{F}_0)$ and $\mathfrak B' = (\mathfrak{F}',\pi',\mathfrak{F}_0')$ is a pair $(f,g)$ such that \label{kb2}
\begin{enumerate}
    \item $f\colon\mathfrak{F}\to\mathfrak{F}'$ and $g\colon\mathfrak{F}_0\to\mathfrak{F}_0'$ are p-morphisms,
    \item the following diagram commutes
        \[\begin{tikzcd}
        	{\mathfrak{F}} && {\mathfrak{F}_0} \\
        	\\
        	{\mathfrak{F}'} && {\mathfrak{F}_0'}
        	\arrow["\pi", two heads, from=1-1, to=1-3]
        	\arrow["f"', from=1-1, to=3-1]
        	\arrow["g", from=1-3, to=3-3]
        	\arrow["{\pi'}"', two heads, from=3-1, to=3-3]
        \end{tikzcd}\]
    \item $f$ is a {\em fiberwise surjection}; that is, the restriction 
    \[\left.f\right|_{\pi^{-1}(w)} \colon \pi^{-1}(w)\to (\pi')^{-1}(g(w))\] is onto for each point $w$ in $\mathfrak{F}_0$. \label{kbc}
\end{enumerate}
\end{enumerate}
\end{defn}

Clearly, Kripke bundles and morphisms between them form a category, which we denote by $\mathsf{KBn}$. 
The composition in $\mathsf{KBn}$ of two composable morphisms $(f,g)$ and $(h,k)$ is given by $(h,k)\circ(f,g)=(h\circ f, k\circ g)$ and identity morphisms are given by $\mathbbm{1}_\mathfrak{B}=(\mathbbm{1}_\mathfrak{F},\mathbbm{1}_{\mathfrak{F}_0})$, where $\mathfrak{B}=(\mathfrak{F},\pi,\mathfrak{F}_0)$ is an arbitrary Kripke bundle and $\mathbbm{1}_\mathfrak{F},\mathbbm{1}_{\mathfrak{F}_0}$ are the respective identity functions.   

\begin{rmk}\label{morphisms}
\begin{enumerate}
    \item[]
    
    \item The fiberwise surjection requirement in \tricref{kb}{kb2}{kbc} is equivalent to a more recognizable condition: 
    for any $x\in X$ and $y\in X'$, 
    $$\pi'(f(x)) = \pi'(y) \implies (\exists z \in X)(\pi(z)=\pi(x)\text{ and } f(z)=y).$$ {\label{fiberwise_onto}}
    \item Recall that isomorphisms of Kripke frames are bijections $f\colon(X,R)\to
    (X',R')$ that preserve and reflect the relations $R$ and $R'$   (that is, for any $x,y\in X$, we have 
    $x\mathrel{R}y\iff f(x)\mathrel{R'}f(y)$).  Isomorphisms in $\mathsf{KBn}$ are 
    pairs $(f,g)$ for which both $f$ and $g$ are isomorphisms of Kripke frames. {\label{b-iso}}
\end{enumerate}
\end{rmk}

A philosophical intuition behind how Kripke bundles generalzie predicate Kripke frames can be briefly outlined as follows. For a Kripke bundle  $\pi\colon(X,R)\twoheadrightarrow(X_0,R_0)$, we view $X_0$ as the set of ``possible worlds'', and 
$R_0$ as the 
accessibility relation between 
these possible worlds. For each ${w\in X_0}$, 
$\pi^{-1}(w)$ is the set of ``individuals'' residing in the ``world'' $w$.
If $w\mathrel{R_0}u$ and $a\in \pi^{-1}(w)$, then 
the elements of the set $R[a]\cap\pi^{-1}(u)$ are 
the ``inheritors'' of the individual $a$ in the ``world'' $u$, and represent 
different ways in which an individual can manifest when transitioning to an accessible world. With this intuition, 
we next recall how to formally interpret $\text{Form}(\mathcal{L}_Q)$ in a Kripke bundle. To this end, we denote the powerset operator by $\wp$.

\begin{defn} \cite[Def.~3.2.4]{GSS}\label{valuations}
A {\em valuation} on a Kripke bundle $\mathfrak{B}=(\mathfrak{F},\pi,\mathfrak{F}_0)$ is a function $I$ that assigns to each $m$-ary predicate $P$ an element of $\prod\limits_{w\in X_0} \wp([\pi^{-1}(w)]^m)$. 
We call $\mathfrak M=(\mathfrak{B},I)$ a {\em model} based on $\mathfrak{B}$. 
\end{defn}

We present valuations as $X_0$-indexed interpretations of the predicates; this means that  $I(P)=\left(I_w(P)\right)_{w\in X_0}$, where $I_w(P)\subseteq[\pi^{-1}(w)]^m$ for an $m$-ary predicate $P$. 


%

Following \cite{GSS}, at each possible world $w\in X_0$, we only interpret $\pi^{-1}(w)$-sentences (these are formulae with no free variables) obtained  by enriching the language $\mathcal{L}_Q$ with a set of constants corresponding to the individuals from $\pi^{-1}(w)$,
and define satisfaction in a model $\mathfrak M = (\mathfrak{B},I)$ at a world $w\in X_0$ inductively as follows:

\begin{center}
\begin{tabular}{l c l}
     $w\models_I P(\Vec{a})$ & $\iff$ & $\Vec{a}\in I_w(P)$;\\
     $w\models_I \varphi \vee \psi$ & $\iff$ & $w\models_I \varphi \text{ or } w\models_I \psi$;\\
     $w\models_I\neg\varphi$ & $\iff$ & $w\not\models_I \varphi$;\\
     $w\models_I \exists x \varphi$ & $\iff$ & $w\models_I \varphi[a/x]$ for some $a\in\pi^{-1}(w)$;\\
     $w\models_I \Diamond\varphi[\Vec{a}/\Vec{x}]$ & $\iff$ & $v\models_I \varphi[\Vec{b}/\Vec{x}]$ for some $v\in R_0[w]$ \\
      & & and $b_i\in R[a_i]\cap\pi^{-1}(v)$,\\ 
      & & where $a_i=a_j$ implies $b_i=b_j$.
\end{tabular}
\end{center}

For a given formula $\varphi$, we write $\mathfrak M\models \varphi$ provided $w\models_I\forall\Vec{x}\varphi$ for all $w\in X_0$, where $\forall\Vec{x}\varphi$ denotes the universal closure of $\varphi$. Further, we write $\mathfrak{B}\models\varphi$ provided $\mathfrak M\models\varphi$ for any model $\mathfrak M$ based on $\mathfrak{B}$. When $\mathfrak{B}\models \varphi$, we say that $\varphi$ is {\em valid} on $\mathfrak{B}$.


\begin{rmk}{\label{inheritors}}
    In the $\Diamond$-clause above, 
    the additional provision that $a_i=a_j$ 
    implies $b_i=b_j$ 
    is motivated by the fact that individuals in domains may not have unique inheritors in an accessible world, so each occurrence of an individual $a$ in a formula must be \emph{uniformly} replaced by an inheritor $b$. If we interpret $\Diamond\varphi[a/x]$ without any restriction, 
    theorems of \textbf{QK} may become invalid; see \cite[Sec.~5.1]{GSS} for details.
\end{rmk} 

In the propositional case, the set of formulae valid on a frame constitutes a normal modal logic. However, for Kripke bundles, the set of valid formulae is not a predicate logic since it may not be closed under uniform substitution. For example, consider the Kripke bundle $\pi \colon (X,R) \twoheadrightarrow (X_0,R_0)$, where $X=\{a,b\}$, $X_0=\{w\}$, $R=\{(a,a),(b,b),(a,b)\}$, and $R_0=\{(w,w)\}$. The formula $\Diamond p \to p$ is valid on the Kripke bundle, while the substitution instance $\Diamond P(x) \to P(x)$ is not: consider an interpretation $I$ where $I_w(P)=\{b\}$ (this set is highlighted in red). Then in the resulting model $\mathfrak{M}$, we have $w\models_I\Diamond P(a)$, while $w\not\models_I P(a)$. 

\begin{figure}[ht]
\centering
\tikzset{every picture/.style={line width=0.75pt}} 
\begin{tikzpicture}[x=0.75pt,y=0.75pt,yscale=-1,xscale=1,scale=0.8]
\draw  [color={rgb, 255:red, 74; green, 144; blue, 226 }  ,draw opacity=1 ] (165.19,91.78) .. controls (184.52,91.68) and (200.33,120.14) .. (200.5,155.33) .. controls (200.68,190.52) and (185.14,219.13) .. (165.81,219.22) .. controls (146.48,219.32) and (130.67,190.86) .. (130.5,155.67) .. controls (130.32,120.48) and (145.86,91.87) .. (165.19,91.78) -- cycle ;
\draw   (160,115.5) .. controls (160,112.46) and (162.46,110) .. (165.5,110) .. controls (168.54,110) and (171,112.46) .. (171,115.5) .. controls (171,118.54) and (168.54,121) .. (165.5,121) .. controls (162.46,121) and (160,118.54) .. (160,115.5) -- cycle ;
\draw   (160,195.5) .. controls (160,192.46) and (162.46,190) .. (165.5,190) .. controls (168.54,190) and (171,192.46) .. (171,195.5) .. controls (171,198.54) and (168.54,201) .. (165.5,201) .. controls (162.46,201) and (160,198.54) .. (160,195.5) -- cycle ;
\draw    (165.5,190) -- (165.5,123) ;
\draw [shift={(165.5,121)}, rotate = 90] [color={rgb, 255:red, 0; green, 0; blue, 0 }  ][line width=0.75]    (10.93,-3.29) .. controls (6.95,-1.4) and (3.31,-0.3) .. (0,0) .. controls (3.31,0.3) and (6.95,1.4) .. (10.93,3.29)   ;
\draw   (319,156.5) .. controls (319,153.46) and (321.46,151) .. (324.5,151) .. controls (327.54,151) and (330,153.46) .. (330,156.5) .. controls (330,159.54) and (327.54,162) .. (324.5,162) .. controls (321.46,162) and (319,159.54) .. (319,156.5) -- cycle ;
\draw  [color={rgb, 255:red, 255; green, 0; blue, 0 }  ,draw opacity=1 ] (145.4,115.5) .. controls (145.4,104.4) and (154.4,95.4) .. (165.5,95.4) .. controls (176.6,95.4) and (185.6,104.4) .. (185.6,115.5) .. controls (185.6,126.6) and (176.6,135.6) .. (165.5,135.6) .. controls (154.4,135.6) and (145.4,126.6) .. (145.4,115.5) -- cycle ;
\draw  [dash pattern={on 4.5pt off 4.5pt}]  (171,115.5) -- (317.07,155.97) ;
\draw [shift={(319,156.5)}, rotate = 195.48] [color={rgb, 255:red, 0; green, 0; blue, 0 }  ][line width=0.75]    (10.93,-3.29) .. controls (6.95,-1.4) and (3.31,-0.3) .. (0,0) .. controls (3.31,0.3) and (6.95,1.4) .. (10.93,3.29)   ;
\draw  [dash pattern={on 4.5pt off 4.5pt}]  (171,195.5) -- (317.07,157.01) ;
\draw [shift={(319,156.5)}, rotate = 165.24] [color={rgb, 255:red, 0; green, 0; blue, 0 }  ][line width=0.75]    (10.93,-3.29) .. controls (6.95,-1.4) and (3.31,-0.3) .. (0,0) .. controls (3.31,0.3) and (6.95,1.4) .. (10.93,3.29)   ;
\draw (149,185.4) node [anchor=north west][inner sep=0.75pt]    {$a$};
\draw (149,104.4) node [anchor=north west][inner sep=0.75pt]    {$b$};
\draw (331.5,147.6) node [anchor=north west][inner sep=0.75pt]    {$w$};
\draw (266,152) node [anchor=north west][inner sep=0.75pt]    {$\pi $};
\end{tikzpicture}
\end{figure}

Because of this, we use the notion of strong validity (see, e.g., \cite[Def.~5.2.11]{GSS}):

\begin{defn}\ \label{svalid} 
\begin{enumerate}
    \item A formula $\varphi$ is {\em strongly valid} in a Kripke bundle $\mathfrak{B}$ if every substitution instance of $\varphi$ is valid on $\mathfrak{B}$. We denote this by $\mathfrak{B}\models^+\hspace{-3pt}\varphi$.{\label{svalid1}}
    \item For a set $\Phi$ of $\mathcal{L}_Q$-formulae, we let $\mathfrak{B}\models^+\hspace{-3pt}\Phi$ indicate that $\mathfrak{B}\models^+\hspace{-3pt}\varphi$ for all $\varphi\in \Phi$. If $\mathfrak{B}\models^+\hspace{-3pt}\Phi$, we say that $\Phi$ is \emph{strongly sound} with respect to $\mathfrak{B}$.{\label{svalid2}}
    \item For a class $\mathsf{C}$ of Kripke bundles, $\Phi$ is said to be {\em strongly sound} with respect to $\mathsf{C}$ if $\mathfrak{B}\models^+\hspace{-3pt}\Phi$ for each $\mathfrak{B}\in\mathsf{C}$. {\label{svalid3}}
\end{enumerate}
\end{defn}

\begin{prop}{\label{sv}} \cite[Prop.~5.2.12]{GSS}
    The set of strongly valid formulae in a Kripke bundle $\mathfrak{B}$ forms a 
    pm-logic.
\end{prop}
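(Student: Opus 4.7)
The plan is to verify two things: that the set $\{\varphi : \mathfrak{B} \models^+ \varphi\}$ contains $\textbf{QK}$, and that it is closed under Modus Ponens, necessitation, generalization, and uniform substitution.

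Closure under uniform substitution is immediate from \Cref{svalid}: if every substitution instance of $\varphi$ is valid in $\mathfrak{B}$, then every substitution instance of any $\sigma(\varphi)$ is valid, since the composition of two substitutions is again a substitution. So I would dispatch this rule first.

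For the remaining three rules, I would proceed in two steps. First, verify by a routine inspection of the semantic clauses that the set of formulae merely valid in $\mathfrak{B}$ (in the sense of $\mathfrak{B} \models \varphi$) is closed under MP, necessitation, and generalization, with due attention to the universal-closure convention built into $\mathfrak M \models \varphi$. Second, lift this closure to strong validity by observing that each of the three rules commutes with uniform substitution, up to the standard renaming of bound variables: for any substitution $\sigma$ one has $\sigma(\varphi \to \psi) = \sigma(\varphi) \to \sigma(\psi)$, $\sigma(\Box \varphi) = \Box \sigma(\varphi)$, and $\sigma(\forall x \varphi) = \forall x\, \sigma(\varphi)$. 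Consequently, if the premises of a rule are strongly valid, then applying $\sigma$ to the whole rule instance produces the same rule applied to valid premises, so the $\sigma$-instance of the conclusion is valid; hence the conclusion is strongly valid.

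For containment of $\textbf{QK}$, since closure under the four inference rules is already established, it suffices to verify strong validity of the base axioms: those of the classical predicate calculus, and the theorems of the propositional modal logic $\textbf{K}$. Both families are handled by the standard soundness arguments adapted to Kripke bundles. The key point---and the reason \emph{strong} rather than ordinary validity appears in the statement---is that the ``uniform inheritor'' clause $a_i = a_j \Rightarrow b_i = b_j$ in the $\Diamond$-semantics (see \Cref{inheritors}) is precisely what forces substitution instances of the classical predicate axioms to remain valid; without it, as the example immediately preceding \Cref{svalid} illustrates, an instance such as $\Diamond p \to p$ may hold while its predicate substitution instance $\Diamond P(x) \to P(x)$ fails.

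The main obstacle I expect is bookkeeping: the interaction between uniform substitution and binding requires careful handling of the variable-renaming conventions built into the definition of $\varphi[\psi/P(\Vec{x})]$, so that the commutation identities above hold on the nose rather than merely up to $\alpha$-equivalence. Once this setup is in place, every step is essentially mechanical and no new semantic ideas are required.
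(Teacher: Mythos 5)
Your proposal is correct in outline. Note that the paper does not prove this statement at all: it is quoted directly from \cite[Prop.~5.2.12]{GSS}, so there is no in-paper argument to compare against. Your reconstruction is the standard one and matches the argument in that reference: closure under uniform substitution is built into the definition of strong validity (composition of substitutions is a substitution), closure under the remaining rules follows by commuting each rule with substitution and using closure of ordinary validity under that rule, and containment of $\textbf{QK}$ reduces to soundness of the base axioms under arbitrary substitution instances. You also correctly locate where the genuine mathematical content sits, namely that validity of substitution instances of the classical predicate axioms (and of the $\textbf{K}$-theorems) depends on the uniform-inheritor condition $a_i=a_j\Rightarrow b_i=b_j$ in the $\Diamond$-clause; without it the base case fails, as \Cref{inheritors} indicates. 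The only parts left schematic are exactly the parts one would expect to cite or treat as routine: the bookkeeping for composing substitutions with bound-variable renaming, and the soundness of the $\textbf{QK}$ axioms over Kripke bundles, both of which are carried out in \cite[Sec.~5.2]{GSS}.
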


    For a Kripke bundle $\mathfrak B$, we let $\mathbf{L}(\mathfrak{B})$ denote the pm-logic of strongly valid formulae in $\mathfrak B$, and for a class $\mathsf{C}$ of Kripke bundles, we let $\mathbf{L}(\mathsf{C})=\bigcap_{\mathfrak{B}\in\mathsf{C}}\mathbf{L}(\mathfrak{B})$. We call
    a pm-logic $\textbf{Q}$ {\em Kripke bundle complete} if 
    there is a class $\mathsf{C}$ of Kripke bundles such that $\mathbf{Q}=\mathbf{L}(\mathsf{C})$. Kripke bundle completeness provides a proper generalization of Kripke completeness (see, e.g., \cite[p.~113]{SS}). Therefore, since
    $\textbf{QK}$ and $\textbf{QS4}$ are Kripke complete (see, e.g., \cite[Thm.~15.3]{H&G}), they are also 
    Kripke bundle complete. 
    However, there exist pm-logics that are Kripke bundle incomplete. For example, $\textbf{QGrz}$ is such
    \cite{isoda, nagaoka}. 




%


\section{Monadic modal logic}{\label{monadic}}

Let $\mathcal L$ be a propositional modal language with one modality $\Diamond$, and let $\mathcal{L}_\exists$ be the extension of $\mathcal L$ with another modality $\exists$. As usual, $\Box$ abbreviates $\lnot\Diamond\lnot$ and $\forall$ abbreviates $\lnot\exists\lnot$. We use $\text{Form}(\mathcal{L}_\exists)$ to denote the set of all formulae in $\mathcal{L}_\exists$. 

For each modal logic $\textbf L$ in $\mathcal L$, let $\textbf{ML}$ be the least monadic extension of $\textbf L$; that is, $\textbf{ML}$ is the least set of formulae of $\mathcal L_\exists$ containing
\begin{itemize}
    \item the theorems of $\textbf L$,
    \item the \textbf{S5} axioms for $\exists$,
    \item the connecting axiom $\exists\Diamond p\to \Diamond\exists p$,
\end{itemize}
and closed under 
\begin{center}

\setlength{\tabcolsep}{6pt}
\renewcommand{\arraystretch}{2}
\begin{tabular}{l l}
     $\dfrac{\varphi,\varphi\to\psi}{\psi}$ & (Modus Ponens),\\ 
     $\dfrac{\varphi}{\Box \varphi}$ & ($\Box$-necessitation),\\
     $\dfrac{\varphi}{\forall\varphi}$ & ($\forall$-necessitation),\\
     $\dfrac{\varphi}{\varphi[\psi/p]}$ & (substitution).
\end{tabular}
\end{center}


In particular, \textbf{MK} is the least monadic extension of $\textbf K$, $\textbf{MS4}$ is the least monadic extension of $\textbf{S4}$, etc.

\begin{defn} \cite[Def.~2.13]{GBm}
A {\em monadic modal logic} or simply an {\em mm-logic} is any set of formulae of $\mathcal L_\exists$ containing \textbf{MK} and closed under the above inference rules. 
\end{defn}



\begin{rmk}
Monadic modal logics can alternatively be viewed as semicommutators of propositional modal logics with $\textbf{S5}$ (see, e.g., \cite[Def.~2.14]{semicomm}), and have been studied in the context of \emph{expanding relativized products} of Kripke frames (see, e.g., \cite[p.~432]{mdim}). As such, the relationship between semicommutators and predicate modal logics has been a subject of recent investigation (see, e.g., \cite[Sec.~3]{semicomm}).
\end{rmk}

A Kripke-style semantics for \textbf{MS4} was introduced by Esakia \cite{esa}. Our treatment below follows that of \cite{GBm}, where the semantics of \cite{esa} is generalized to all mm-logics.

\begin{defn}\ \label{mk_stuff}
\begin{enumerate}
    { \item A} {\em monadic Kripke frame} or simply an $\textbf{MK}$-{\em frame} is a triple $\mathfrak F = (X,R,E)$ such that $(X,R)$ is a Kripke frame,  \label{mk_1}
$E$ is an equivalence relation on $X$, and the following commutativity condition is satisfied:
\[
(\forall x,y,z\in X)(x\mathrel{E}y \mbox{ and } y\mathrel{R}z \Longrightarrow \exists u \in X : x\mathrel{R}u \mbox{ and } u\mathrel{E}z).
\]
\[\begin{tikzcd}
	u && z \\
	\\
	x && y
	\arrow["E"{description}, dashed, tail reversed, from=1-1, to=1-3]
	\arrow["R"{description}, dashed, from=3-1, to=1-1]
	\arrow["E"{description}, tail reversed, from=3-1, to=3-3]
	\arrow["R"{description}, from=3-3, to=1-3]
\end{tikzcd}\]

\item A {\em morphism} between two $\textbf{MK}$-frames $\mathfrak F = (X,R,E)$ and $\mathfrak F' = (X',R',E')$ is a map $f\colon X\to X'$ that is a p-morphism with respect to both $R$ and $E$.
\end{enumerate}
\end{defn}

Clearly, {\textbf{MK}}-frames and morphisms between them form a category, which we denote by $\mathsf{MKF}$. Composition in $\mathsf{MKF}$ is usual function composition and identity morphisms are identity functions $\mathbbm{1}_\mathfrak{F}$.

\begin{rmk}{\label{iso}}
    Isomorphisms of $\textbf{MK}$-frames are bijections $f \colon (X,R,E) \twoheadrightarrowtail (X',R',E')$ that preserve and reflect both $R$ and $E$. 
\end{rmk}

We interpret $\text{Form}(\mathcal L_\exists)$ in an $\textbf{MK}$-frame $\mathfrak F = (X,R,E)$ by interpreting $\Diamond$ using $R$ and $\exists$ using $E$. More precisely, 
a {\em valuation} on $\mathfrak F$ is a function $v$ that assigns to each propositional letter $p$ a subset $v(p)$ of $X$. Then, for each $x\in X$, we have:

\begin{center}
\begin{tabular}{l c l}
    $x\models_v p$ & $\iff$ & $x\in v(p)$; \\
    $x\models_v \varphi \vee \psi$ & $\iff$ & $x\models_v \varphi \text{ or } x\models_v \psi$;\\
    $x\models_v\neg\varphi$ & $\iff$ & $x\not\models_v \varphi$;\\
    $x\models_v \Diamond\varphi$ & $\iff$ & $y\models_v \varphi$ for some $y\in R[x]$;\\
    $x\models_v \exists\varphi$ & $\iff$ & $y\models_v \varphi$ for some $y\in E[x]$.
\end{tabular}
\end{center}

The pair $\mathfrak M=(\mathfrak F, v)$ is called a {\em model} based on $\mathfrak F$. For any $\mathcal{L}_\exists$-formula $\varphi$, we let $\mathfrak{M}\models\varphi$ indicate that $x\models_v\varphi$ for all $x\in X$. We write $\mathfrak{F}\models\varphi$ if $\mathfrak{M}\models\varphi$ for any model $\mathfrak{M}$ based on $\mathfrak{F}$. If $\mathfrak{F}\models\varphi$, we say that $\varphi$ is {\em valid} on $\mathfrak{F}$.

\begin{defn}{\label{M-frame}}
    Let $\textbf M$ be an mm-logic. We call an \textbf{MK}-frame $\mathfrak F = (X,R,E)$ an {\em {\bf M}-frame} provided each theorem of $\textbf M$ is valid on $\mathfrak F$.
\end{defn}
For example, $\mathfrak F$ is an $\textbf{MS4}$-frame provided $R$ is a preorder, $\mathfrak F$ is an $\textbf{MGrz}$-frame provided $R$ is a Noetherian partial order, etc.~(see, e.g., \cite[Sec.~3.8]{CZ}). 

%
%
 By \cite[Thm.~9.10]{mdim}, $\textbf{MK}$ and $\textbf{MS4}$ 
can be realized as expanding relativized products. Therefore, 
by \cite[Thms.~9.10,9.12]{mdim}, they can be embedded into the product logics $\textbf{K}\times\textbf{S5}$ and $\textbf{S4}\times\textbf{S5}$, respectively. By \cite[Thm.~5.27]{mdim}, the latter two logics have the fmp. Consequently, both $\textbf{MK}$ and $\textbf{MS4}$ also
have the fmp 
(for an algebraic proof of the fmp for $\textbf{MS4}$, see \cite[Thm.~5.4]{GLt}). We thus arrive at the following:

\begin{thm} \label{thm: completeness}
\begin{enumerate}
    \item[] 
    \item 
    \textbf{MK} is the logic of finite \textbf{MK}-frames. {\label{MK}}
    \item 
    \textbf{MS4} is the logic of finite \textbf{MS4}-frames. {\label{MS4}}
\end{enumerate}
\end{thm}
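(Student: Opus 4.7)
The plan is to prove soundness and completeness-with-fmp separately, handling \textbf{MK} and \textbf{MS4} in parallel.

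For soundness, I would verify that each axiom of \textbf{MK} (resp.\ \textbf{MS4}) is valid in any \textbf{MK}-frame (resp.\ \textbf{MS4}-frame). The propositional, \textbf{K}, and (for \textbf{MS4}) \textbf{S4} axioms for $\Diamond$ are standard; the \textbf{S5} axioms for $\exists$ follow from $E$ being an equivalence relation. The distinctive case is the connecting axiom $\exists\Diamond p\to\Diamond\exists p$: if $x\models_v\exists\Diamond p$, pick $y\in E[x]$ and $z\in R[y]$ with $z\models_v p$; the commutativity condition provides $u\in R[x]$ with $u\mathrel{E}z$, so $u\models_v\exists p$ and hence $x\models_v\Diamond\exists p$. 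Closure under modus ponens, the two necessitation rules, and substitution is routine.

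For completeness, I would construct the canonical frame $\mathfrak{F}^c=(X^c,R^c,E^c)$ whose points are maximal \textbf{MK}-consistent (resp.\ \textbf{MS4}-consistent) sets, with $R^c$ and $E^c$ defined in the usual way from $\Box$ and $\forall$. Standard Henkin-style arguments give that $E^c$ is an equivalence relation (from the \textbf{S5} axioms for $\exists$) and, in the \textbf{MS4} case, that $R^c$ is a preorder. The commutativity condition for $\mathfrak{F}^c$ follows from the connecting axiom because the latter is Sahlqvist, and hence canonical; a direct argument applies the Lindenbaum lemma to $\{\varphi:\Box\varphi\in\Gamma\}\cup\{\forall\psi:\forall\psi\in\Lambda\}$ in the configuration $\Gamma\mathrel{E^c}\Delta\mathrel{R^c}\Lambda$ to produce the required midpoint. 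A Truth Lemma then refutes any non-theorem at an appropriate point.

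To upgrade to the fmp, I would filtrate. Given a non-theorem $\varphi$, fix a finite subformula-closed set $\Sigma$ containing $\varphi$, and define $\Gamma\sim\Delta$ iff $\Gamma\cap\Sigma=\Delta\cap\Sigma$, so that the quotient $X^c/{\sim}$ is finite. The task is to equip it with relations $R$ and $E$ such that the filtration lemma holds for $\Sigma$, $E$ is an equivalence, (for \textbf{MS4}) $R$ is a preorder, and the commutativity condition is satisfied. The first three conditions are handled by the standard minimal/maximal (and, for \textbf{S4}, transitive) filtration techniques. The real obstacle is preserving commutativity simultaneously with these three conditions, which requires choosing the filtrated $R$ and $E$ in a coordinated way; this is exactly the content of the general fmp theorems for expanding relativized products cited in \cite[Thms.~6.52,~9.10]{mdim}, invokable once one observes that \textbf{MK} and \textbf{MS4} are the logics of such products. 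For \textbf{MS4}, an alternative algebraic filtration on monadic \textbf{S4}-algebras, as in \cite[Thm.~5.4]{GLt}, gives the same conclusion.
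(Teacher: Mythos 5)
Your proposal is correct and ends up in essentially the same place as the paper: the paper gives no proof of this theorem, deriving it directly from the realization of \textbf{MK} and \textbf{MS4} as expanding relativized products and the fmp results cited as \cite[Thms.~6.52,~9.10]{mdim} (with \cite[Thm.~5.4]{GLt} as an algebraic alternative for \textbf{MS4}), which is exactly what your filtration step invokes for the one genuinely hard point, the preservation of commutativity. The soundness and canonical-model details you supply are fine but are not spelled out in the paper either.
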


Our main result establishes that the above also holds for $\textbf{MGrz}$. 




\section{Monadic Kripke frames and Kripke bundles} {\label{fnb}}

In this section, we demonstrate a natural correspondence between the categories of \textbf{MK}-frames and Kripke bundles. This generalizes a similar correspondence for the semantics of intuitionistic monadic and predicate logics \cite{szk,BV}.

\begin{defn}{\label{Q_rel}}
    Let $\mathfrak{F}=(X,R,E)$ be an \textbf{MK}-frame.
    \begin{enumerate}
        \item Define $Q$ on $X$ to be the composite $Q=E\circ R$; that is, $x \mathrel{Q} y \iff \exists z \in X : x\mathrel{R}z \text{ and } z\mathrel{E}y$.{\label{Qrel}}
    \[\begin{tikzcd}
	z && y \\
	\\
	x
	\arrow["E"{description}, dashed, from=1-1, to=1-3]
	\arrow["R"{description}, dashed, from=3-1, to=1-1]
	\arrow["Q"{description}, from=3-1, to=1-3]
    \end{tikzcd}\]
        \item The \emph{$E$-skeleton} of $\mathfrak{F}$ is the frame $\mathfrak{F}_0=(X_0,R_0)$, where $X_0$ is the quotient of $X$ by $E$. As usual, we denote the elements of $X_0$ by $[x]$, where $x\in X$, and define $R_0$ by 
    \begin{equation*}
        [x]\mathrel{R_0}[y]\iff x\mathrel{Q}y
    \end{equation*}{\label{Qrel1}}
    for all $x,y\in X$.
        %
        %
    \end{enumerate}
\end{defn}
\begin{rmk}{\label{Q_0}}
 As a consequence of \crefdefpart{mk_stuff}{mk_1}, we obtain that 
\[
[x]\mathrel{R_0}[y]\iff x'\mathrel{R}y' \mbox{ for some } x' \in [x] \mbox{ and } y' \in [y]
\]
for all $x,y \in X$.

\end{rmk}

\begin{prop}{\label{functor1}}
    There is a functor $\mathscr{B}\colon\mathsf{MKF}\to\mathsf{KBn}$. 
\end{prop}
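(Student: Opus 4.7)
The plan is to define $\mathscr{B}$ on objects by assigning to an \textbf{MK}-frame $\mathfrak{F} = (X,R,E)$ the triple $\mathscr{B}(\mathfrak{F}) = ((X,R),\pi_\mathfrak{F},\mathfrak{F}_0)$, where $\mathfrak{F}_0 = (X_0,R_0)$ is the $E$-skeleton and $\pi_\mathfrak{F}\colon X \to X_0$ is the quotient map $x\mapsto [x]$. The first task is to verify this is a Kripke bundle. Surjectivity of $\pi_\mathfrak{F}$ is immediate. For the p-morphism property, $[y]\in R_0[[x]]$ iff $x\mathrel{Q}y$ iff there exists $w$ with $x\mathrel{R}w$ and $w\mathrel{E}y$, iff $[y]\in \pi_\mathfrak{F}(R[x])$; this uses only \Cref{Q_rel} and \Cref{Q_0}, so the commutativity axiom of $\mathfrak{F}$ is not needed for this step.

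On morphisms, given $f\colon\mathfrak{F}\to\mathfrak{F}'$ in $\mathsf{MKF}$, I would set $\mathscr{B}(f) = (f,\bar{f})$, where $\bar{f}\colon X_0\to X_0'$ sends $[x]$ to $[f(x)]$. Well-definedness is immediate from $f$ preserving $E$ (being an $E$-p-morphism, in particular preserving the relation). The three conditions of \Cref{kb}(\ref{kb2}) then need to be checked: (a) $f$ is a p-morphism for $R$ by hypothesis; commutation of the square is automatic since both $\pi'\circ f$ and $\bar{f}\circ\pi$ send $x$ to $[f(x)]$; (b) the fiberwise surjection condition of \Cref{kb}(\ref{kbc}) holds because $\pi_\mathfrak{F}^{-1}([x]) = E[x]$ and $(\pi_{\mathfrak{F}'})^{-1}(\bar{f}([x])) = E'[f(x)]$, and $f(E[x]) = E'[f(x)]$ as $f$ is an $E$-p-morphism.

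The step most likely to require care is verifying that $\bar{f}$ is a p-morphism $(X_0,R_0)\to(X_0',R_0')$, since it is the only place where both p-morphism conditions on $f$ must interact. To show $R_0'[\bar{f}([x])] \subseteq \bar{f}(R_0[[x]])$, take $[w']\in R_0'[[f(x)]]$, so there is $z'$ with $f(x)\mathrel{R'}z'$ and $z'\mathrel{E'}w'$. Since $f$ is an $R$-p-morphism, $z' = f(v)$ for some $v\in R[x]$; since $f$ is an $E$-p-morphism, there exists $u\in E[v]$ with $f(u) = w'$. Then $x\mathrel{Q}u$, so $[u]\in R_0[[x]]$ and $\bar{f}([u]) = [w']$. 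The reverse inclusion is immediate from $f$ preserving $R$.

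Functoriality is then routine: $\mathscr{B}(\mathbbm{1}_\mathfrak{F}) = (\mathbbm{1}_X,\mathbbm{1}_{X_0}) = \mathbbm{1}_{\mathscr{B}(\mathfrak{F})}$, and since $\overline{g\circ f} = \bar{g}\circ\bar{f}$ by construction, one obtains $\mathscr{B}(g\circ f) = \mathscr{B}(g)\circ\mathscr{B}(f)$ directly from the definition of composition in $\mathsf{KBn}$. I do not anticipate a genuine obstacle; the main thing to keep track of is that the joint use of the two p-morphism properties of $f$ (for $R$ and for $E$) is what makes the induced map on skeletons well-behaved.
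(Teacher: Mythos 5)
Your proposal is correct and follows essentially the same route as the paper's proof: the same definitions of $\mathscr{B}$ on objects and morphisms, the same verification that $\pi$ is an onto p-morphism via the definition of $Q$ and $R_0$, and the same use of the $E$-p-morphism property for the fiberwise surjection. The only difference is that you spell out the p-morphism property of the induced map on skeletons in more detail than the paper (which dispatches it in one line), and your account of that step is accurate.
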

\begin{proof}
    For an \textbf{MK}-frame $\mathfrak F = (X,R,E)$, let $\mathscr{B}(\mathfrak F) =((X,R),\pi_X,(X_0,R_0))$, where $\pi_X\colon X\twoheadrightarrow X_0$ is the quotient map $\pi_X(x) = [x]$. Clearly, $\pi_X$ is onto and $x \mathrel{R} y$ implies $\pi_X(x) \mathrel{R}_0 \pi(y)$ for all $x,y \in X$. To see that it is a p-morphism, suppose 
$[x]\mathrel{R_0}[y]$. {Then}
$x\mathrel{Q}y$, so $x\mathrel{R}z$ and $z\mathrel{E}y$ for some $z\in X$. Thus, $x\mathrel{R}z$ and 
$\pi_X(z)=[y]$. This proves that $\mathscr{B}(X,R,E)$ is a Kripke bundle.

For a morphism of \textbf{MK}-frames $f\colon (X,R,E)\to (X',R',E')$, let $\mathscr{B}(f)$ be the pair $(f,f_0)$, where $f_0\colon X_0\to X_0'$ is the map $f_0([x]) = [f(x)]$. We show that $\mathscr{B}(f)$ is a morphism of Kripke bundles. The map 
$f_0\colon X_0\to X_0'$ is well defined since $[x]=[y]$ implies $x\mathrel{E}y$, so 
$f(x)\mathrel{E}'f(y)$, and hence $[f(x)]=[f(y)]$. It is 
a p-morphism because $f$ is a p-morphism.
Moreover, due to the definition of $f_0$, the following diagram commutes.

\adjustbox{scale=1, center}{
    \begin{tikzcd}
    {(X,R)} && {(X_0,R_0)} \\
    \\
    {(X',R')} && {(X_0',R_0')}
    \arrow["\pi_X", two heads, from=1-1, to=1-3]
    \arrow["f"', from=1-1, to=3-1]
    \arrow["f_0", from=1-3, to=3-3]
    \arrow["{\pi_{X'}}"', two heads, from=3-1, to=3-3]
    \end{tikzcd}}

To see that $f$ is a fiberwise surjection,  it is sufficient to verify  \crefdefpart{morphisms}{fiberwise_onto}. 
Let $\pi_{X'}(f(x))=\pi_{X'}(y)$. Then $f(x)\mathrel{E}'y$. Therefore, there is $z\in X$ such that $x\mathrel{E}z$ and $f(z)=y$. Thus, $\pi_X(x)=\pi_X(z)$ and $f(z)=y$.
Finally, it is routine to check that $\mathscr{B}$ preserves identities and compositions, hence $\mathscr{B}\colon\mathsf{MKF}\to\mathsf{KBn}$ is a functor. 
\end{proof}

\begin{prop}\label{functor2}
    There is a functor  $\mathscr{F}\colon\mathsf{KBn}\to\mathsf{MKF}$.
\end{prop}

\begin{proof}
For a Kripke bundle $\mathfrak B = ((X,R),\pi,(X_0,R_0))$, let $\mathscr{F}(\mathfrak B)=(X,R,E_{\pi})$, where 
\[
x\mathrel{E_{\pi}}y \iff \pi(x)=\pi(y).
\]

We show that $\mathscr{F}(\mathfrak B)$
is an \textbf{MK}-frame.
Since $E_\pi$ is clearly an equivalence on $X$, we only need to verify that commutativity holds for $R$ and $E_\pi$. Suppose $x\mathrel{E_\pi} y$ and $y\mathrel{R}z$. Then
$\pi(x)=\pi(y)$ and $\pi(y)\mathrel{R_0}\pi(z)$, hence $\pi(x)\mathrel{R_0}\pi(z)$. This implies that there is $u\in X$ with $x\mathrel{R}u$ and $\pi(u)=\pi(z)$. Therefore, $x\mathrel{R}u$ and $u\mathrel{E_\pi} z$, as required.

For a Kripke bundle morphism $(f,g)\colon((X,R),\pi,(X_0,R_0))\to ((X',R'),\pi',(X_0',R_0'))$, let ${\mathscr{F}(f,g)=f}$. We show that $\mathscr{F}(f,g)=f$
is a morphism of \textbf{MK}-frames.
Since $f$
is a p-morphism with respect to $R$, we only need to show that $f$ is a p-morphism with respect to $E_\pi$. First, let $x\mathrel{E_\pi} y$.
Then $\pi(x)=\pi(y)$, so $g(\pi(x))=g(\pi(y))$. Since $g\circ\pi=\pi'\circ f$, we obtain $\pi'(f(x))=\pi'(f(y))$, which gives $f(x)\mathrel{E_{\pi'}}f(y)$. 
Next, let
$f(x)\mathrel{E_{\pi'}}y$, so $\pi'(f(x))=\pi'(y)$. Since $(f,g)$ is a morphism of Kripke bundles, by \tricref{kb}{kb2}{kbc} 
there is $z\in X$ with $\pi(x)=\pi(z)$ and $f(z)=y$. Thus,
$x\mathrel{E_\pi} z$ and $f(z)=y$, yielding that $f$ is a morphism of \textbf{MK}-frames.
Finally, it is routine to check that $\mathscr F$ 
preserves identities and compositions, hence $\mathscr{F}\colon\mathsf{KBn}\to\mathsf{MKF}$ is a functor.
\end{proof}

\begin{thm}\label{equivalence}
    The categories $\mathsf{MKF}$ and $\mathsf{KBn}$ are equivalent.
\end{thm}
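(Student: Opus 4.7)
The plan is to promote the functors $\mathscr{B}$ and $\mathscr{F}$ to an equivalence by exhibiting natural isomorphisms $\mathscr{F}\mathscr{B}\cong\mathbbm{1}_{\mathsf{MKF}}$ and $\mathscr{B}\mathscr{F}\cong\mathbbm{1}_{\mathsf{KBn}}$. I expect the first composite to be equal to the identity on the nose; the second requires identifying the given base frame $(X_0,R_0)$ of a Kripke bundle with the quotient frame built from the induced equivalence $E_\pi$, but this identification will also be evidently a natural isomorphism.

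For the first composite, let $\mathfrak{F}=(X,R,E)$ be an \textbf{MK}-frame. Then $\mathscr{F}\mathscr{B}(\mathfrak{F})=(X,R,E_{\pi_X})$, and by construction $x\mathrel{E_{\pi_X}}y \iff \pi_X(x)=\pi_X(y) \iff [x]=[y] \iff x\mathrel{E}y$, so $E_{\pi_X}=E$. On a morphism $f$ we have $\mathscr{F}\mathscr{B}(f)=\mathscr{F}(f,f_0)=f$. Hence $\mathscr{F}\mathscr{B}=\mathbbm{1}_{\mathsf{MKF}}$.

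For the other composite, fix a Kripke bundle $\mathfrak{B}=((X,R),\pi,(X_0,R_0))$, and write $\mathscr{B}\mathscr{F}(\mathfrak{B})=((X,R),\pi_X,(X/E_\pi,R_0'))$, where $\pi_X(x)=[x]$ and $R_0'$ is as in \Cref{Q_rel}. Define $\tau_\mathfrak{B}\colon X_0\to X/E_\pi$ by $\tau_\mathfrak{B}(\pi(x))=[x]$; this is well-defined and bijective because $\pi$ is onto and $\pi(x)=\pi(y) \iff x\mathrel{E_\pi}y \iff [x]=[y]$. To check that $\tau_\mathfrak{B}$ preserves $R_0$, suppose $\pi(x)\mathrel{R_0}\pi(y)$. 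Since $\pi$ is a p-morphism, there exists $y'\in R[x]$ with $\pi(y')=\pi(y)$, that is $y'\mathrel{E_\pi}y$; hence $x\mathrel{Q}y$ in $\mathscr{F}(\mathfrak{B})$, giving $[x]\mathrel{R_0'}[y]$. Conversely, if $[x]\mathrel{R_0'}[y]$, then $x\mathrel{R}z$ and $\pi(z)=\pi(y)$ for some $z$, and the p-morphism property of $\pi$ yields $\pi(x)\mathrel{R_0}\pi(z)=\pi(y)$. Setting $\eta_\mathfrak{B}=(\mathbbm{1}_X,\tau_\mathfrak{B})$, the diagram $\tau_\mathfrak{B}\circ\pi=\pi_X$ commutes by construction, and the fiberwise surjection condition is automatic since $\pi^{-1}(\pi(x))=E_\pi[x]=\pi_X^{-1}([x])$. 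By \Cref{b-iso}, $\eta_\mathfrak{B}$ is an isomorphism in $\mathsf{KBn}$.

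Naturality of $\eta$ amounts to showing $\tau_{\mathfrak{B}'}\circ g=f_0\circ\tau_\mathfrak{B}$ for any Kripke bundle morphism $(f,g)\colon\mathfrak{B}\to\mathfrak{B}'$, which reduces to evaluating both sides at $\pi(x)$: on the left one gets $\tau_{\mathfrak{B}'}(g(\pi(x)))=\tau_{\mathfrak{B}'}(\pi'(f(x)))=[f(x)]$ (using $g\circ\pi=\pi'\circ f$), and on the right $f_0([x])=[f(x)]$. I anticipate the main technical point to be the verification that $\tau_\mathfrak{B}$ both preserves and reflects $R_0$, since this is where the definition of $R_0'$ via the composite relation $Q=E_\pi\circ R$ (\Cref{Q_0}) must be reconciled with the p-morphism property of $\pi$; once this is in hand, every remaining step is a direct unfolding of the definitions of $\mathscr{B}$, $\mathscr{F}$, and $E_\pi$.
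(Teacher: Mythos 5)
Your proposal is correct and follows essentially the same route as the paper: the first composite is the identity (since $E_{\pi_X}=E$), and the second natural isomorphism is the canonical identification of $X_0$ with $X/E_\pi$ (your $\tau_\mathfrak{B}$ is the paper's $p_0$, just written as $\pi(x)\mapsto[x]$ rather than $w\mapsto\pi^{-1}(w)$), with the same checks of bijectivity, preservation/reflection of the relations, commutativity, fiberwise surjectivity, and naturality. You merely spell out the preservation/reflection argument via the p-morphism property of $\pi$ in more detail than the paper does, which is a fine elaboration rather than a different approach.
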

\begin{proof}
 We show that the functors  $\mathscr{B}\colon\mathsf{MKF}\to\mathsf{KBn}$ and $\mathscr{F}\colon\mathsf{KBn}\to\mathsf{MKF}$ defined in \Cref{functor1,functor2} yield the desired equivalence of categories. For this, 
we first define a natural isomorphism  $\eta\colon\mathbbm{1}_{\mathsf{MKF}}\to \mathscr{F}\circ\mathscr{B}
$ as follows. 
Let $\mathfrak{F}=(X,R,E)$ be an \textbf{MK}-frame. Then $(\mathscr{F}\circ\mathscr{B})(\mathfrak{F})=(X,R,E_{\pi_X})$, and we define 
the component $\eta_{\mathfrak{F}}\colon \mathfrak{F}\to(\mathscr{F}\circ\mathscr{B})(\mathfrak{F})$ to be the identity map. 
We then have $E=E_{\pi_X}$ since
\begin{equation*}
    x\mathrel{E_{\pi_X}}y\iff\pi_X(x)=\pi_X(y)\iff x\mathrel{E}y,
\end{equation*} 
 so it is clear that $\eta$ is a natural isomorphism.

 We next define a natural isomorphism 
$
\varepsilon\colon \mathbbm{1}_{\mathsf{KBn}}\to \mathscr{B}\circ\mathscr{F}$  
as follows. 
Let $\mathfrak{B}=((X,R),\pi,(X_0,R_0))$ be a Kripke bundle. Then $(\mathscr{B}\circ\mathscr{F})(\mathfrak{B})=((X,R),\pi_X,(X/E_\pi, R_\pi))$, where $(X/E_\pi,R_\pi)$ is the $E$-skeleton of $\mathscr{F}(\mathfrak{B})$, and $\pi_X(x) = [x]_{E_\pi}$.
Define the component $\varepsilon_\mathfrak{B}\colon \mathfrak{B}\to (\mathscr{B}\circ\mathscr{F})(\mathfrak{B})$ by setting $\varepsilon_\mathfrak{B}=(\mathbbm{1}_X, p_0)$, where ${p_0\colon X_0\to X/E_\pi}$ is given by $p_0(w) = \pi^{-1}(w)$. 
The map $p_0$ is a well-defined bijection that preserves and reflects the relations in $(X_0,R_0)$ and $(X/E_\pi,R_\pi)$, hence it is an isomorphism of Kripke frames (see \crefdefpart{morphisms}{b-iso}). 
It follows that $\pi_X\circ\mathbbm{1}_X=p_0\circ\pi$. 
Thus, $(\mathbbm{1}_X,p_0)$ is an isomorphism of Kripke bundles. We next verify the naturality of $\varepsilon$. 
If $(f,g)\colon((X,R),\pi,(X_0,R_0))\to((X',R'),\pi',(X_0',R_0'))$ is a morphism of Kripke bundles, then we obtain the diagram shown below.

We show that $(\mathbbm{1}_{X'},p_{0}')\circ(f,g)=(f,f_0)\circ(\mathbbm{1}_X,p_0)$; that is,  $\mathbbm{1}_{X'}\circ f=f\circ\mathbbm{1}_X$ and $p_{0}'\circ g = f_0 \circ p_0$. 
The first equality is obvious and the second 
follows from the definitions of $p_0$, $p_0'$ and $f_0$, and the fact that $f$ is a fiberwise surjection.
Therefore, $\varepsilon$ is a natural isomorphism. Consequently, $\mathsf{MKF}$ and $\mathsf{KBn}$ are equivalent (see, e.g., \cite[Sec.~IV.4]{maclane}).

\adjustbox{scale=0.9, center}{
        \begin{tikzcd}
	& {(X,R)} &&&& {(X,R)} \\
	{(X_0,R_0)} &&&& {(X/E_\pi,R_\pi)} \\
	\\
	\\
	& {(X',R')} &&&& {(X',R')} \\
	{(X_0',R_0')} &&&& {(X'/E_{\pi'},R_{\pi'})}
	\arrow["{{\mathbbm{1}_X}}", from=1-2, to=1-6]
	\arrow["\pi"', two heads, from=1-2, to=2-1]
	\arrow["f", from=1-2, to=5-2]
	\arrow["{{\pi_X}}", two heads, from=1-6, to=2-5]
	\arrow["f", from=1-6, to=5-6]
	\arrow["{{p_0}}", from=2-1, to=2-5]
	\arrow["g"', from=2-1, to=6-1]
	\arrow["{{f_0}}"', from=2-5, to=6-5]
	\arrow["{{\mathbbm{1}_{X'}}}"', from=5-2, to=5-6]
	\arrow["{{\pi'}}", two heads, from=5-2, to=6-1]
	\arrow["{{\pi_{X'}}}", two heads, from=5-6, to=6-5]
	\arrow["{{p_{0}'}}"', from=6-1, to=6-5]
\end{tikzcd}}
\end{proof}

\section{Correspondence between mm-logics and pm-logics}{\label{correspondence}}

There is a close connection between mm-logics and pm-logics. Indeed, in many cases, mm-logics axiomatize the monadic fragment of the corresponding pm-logics; see, for example, \cite{FS,esa,GBm}.
The correspondence we discuss here extends a similar correspondence between monadic and predicate intuitionistic logics \cite{prior,bull,ono,onoszk,szk,BV1}.
To make this more precise,  
we define a translation of formulae of $\mathcal{L}_\exists$ to formulae of
$\mathcal{L}_Q$ as follows. Fix an individual variable $x$. Following \cite{ono},
with each propositional letter $p$, we associate a unique monadic predicate $p^*$, and 
        define the translation $(-)^t\colon {\text{Form}}(\mathcal{L}_\exists)\to{\text{Form}}(\mathcal{L}_Q)$ recursively as follows:
        \begin{enumerate}
            \item $p^t = p^*(x)$;
                \item $(\varphi\vee\psi)^t = \varphi^t \vee \psi^t$;
            \item $(\neg\varphi)^t = \neg \varphi^t$;
            \item $(\Diamond\varphi)^t = \Diamond \varphi^t$;
            \item $(\exists\varphi)^t = \exists x \varphi^t$.
        \end{enumerate}

Note that for any $\mathcal{L}_\exists$-formula $\varphi$, $x$ is the only variable that can occur freely in $\varphi^t$. Therefore, $(\exists\varphi)^t$ is always a sentence. 
Since $\varphi^t$ may only have one free variable, interpreting formulae of the form $(\Diamond\varphi)^t[a/x]$ requires no additional care with inheritors (see \Cref{inheritors}).

As is customary, we identify mm-logics and pm-logics with their deductively closed sets of theorems. For a logic $\textbf{L}$ and a formula $\varphi$, we write  $\textbf{L}\vdash\varphi$ to mean that $\varphi$ is a theorem of $\mathbf{L}$. 

\begin{defn}
    Given an mm-logic $\textbf{M}$ and a pm-logic $\textbf{Q}$, we say that $\textbf{M}$ is the {\em monadic fragment} of $\textbf{Q}$ provided for each $\mathcal{L}_\exists$-formula {$\varphi$,}
    \begin{equation*}
        \textbf{M} \vdash \varphi \iff \textbf{Q} \vdash\varphi^t.
    \end{equation*}
\end{defn}

For an \textbf{MK}-frame $\mathfrak{F}=(X,R,E)$, recall that $\mathscr{B}(\mathfrak{F})=((X,R),\pi_X,(X_0,R_0))$, where $\pi_X\colon X\twoheadrightarrow X_0$ is the quotient map $\pi_X(x)=[x]$ (see the proof of \Cref{functor1}). Throughout this section, we simplify the notation and drop the subscript from $\pi_X$. 

\begin{lem}\label{models}
Let $\mathfrak{F}=(X,R,E)$ be an \textbf{MK}-frame.
\begin{enumerate}
    \item For each valuation $v$ on $\mathfrak F$ there is a valuation $I$ on $\mathscr{B}(\mathfrak{F})$ 
    such that for each $\mathcal{L}_\exists$-formula $\varphi$ and $a\in X$,
    \begin{equation*}
            \pi(a) \models_I\varphi^t[a/x] \iff a\models_v\varphi.
    \end{equation*}{\label{t1}}\vspace{-3.5mm}
    \item For each valuation $I$ on $\mathscr{B}(\mathfrak{F})$ there is a valuation $v$ on $\mathfrak F$ such that for each $\mathcal{L}_\exists$-formula $\varphi$ and $a\in X$,
    \begin{equation*}
            \pi(a) \models_I \varphi^t[a/x] \iff
            a \models_v \varphi.
    \end{equation*}{\label{t2}}
\end{enumerate}
\end{lem}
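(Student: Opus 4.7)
The plan is to prove both parts by mutual translation between valuations and a straightforward induction on the complexity of $\varphi$. The translation $(-)^t$ only uses monadic predicates $p^*$ associated to propositional letters, so one only needs to specify $I$ on these. For part (1), given $v$ I would define, for each propositional letter $p$ and each $w\in X_0$,
\[
I_w(p^*) = v(p) \cap \pi^{-1}(w),
\]
and for any other predicate of $\Sigma$ choose $I$ arbitrarily. For part (2), given $I$ I would define
\[
v(p) = \bigcup_{w\in X_0} I_w(p^*),
\]
which makes sense since the fibers $\pi^{-1}(w)$ partition $X$. Both directions of the biconditional in the lemma will be established by the same induction, so I will focus on the induction itself.

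The induction is on the structure of $\varphi$. In the base case $\varphi = p$, both definitions immediately give $\pi(a)\models_I p^*(a) \Leftrightarrow a \in I_{\pi(a)}(p^*) \Leftrightarrow a \in v(p)$. The Boolean cases $\vee$ and $\neg$ are routine because translation commutes with them and satisfaction is defined pointwise. The two interesting cases are $\Diamond$ and $\exists$.

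For $\Diamond$, using $(\Diamond\varphi)^t = \Diamond\varphi^t$ and the Kripke bundle clause for $\Diamond$, we have $\pi(a)\models_I \Diamond\varphi^t[a/x]$ iff there exist $w\in R_0[\pi(a)]$ and $b\in R[a]\cap \pi^{-1}(w)$ with $w\models_I \varphi^t[b/x]$. Here the inheritor provision of \Cref{inheritors} collapses trivially because $\varphi^t$ has at most one free variable, so there is only one occurrence-class to match. The condition therefore simplifies to the existence of $b\in R[a]$ such that $\pi(b)\models_I \varphi^t[b/x]$, which by the induction hypothesis is equivalent to $b\models_v \varphi$ for some $b\in R[a]$; that is, $a\models_v \Diamond\varphi$. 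For $\exists$, we have $(\exists\varphi)^t[a/x] = \exists x\,\varphi^t$, and the semantics gives that this holds at $\pi(a)$ iff there is $b\in \pi^{-1}(\pi(a))$ with $\pi(a)\models_I \varphi^t[b/x]$. The key identification is $b\in\pi^{-1}(\pi(a)) \Leftrightarrow a\mathrel{E}b$, so invoking the induction hypothesis (noting $\pi(a)=\pi(b)$) yields the equivalence with the existence of $b\in E[a]$ satisfying $\varphi$, i.e.\ $a\models_v \exists\varphi$.

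No step is genuinely difficult; the main things to get right are bookkeeping, namely that the trivial one-variable inheritor condition really does simplify the $\Diamond$-clause, and that membership in the fiber $\pi^{-1}(\pi(a))$ is exactly the $E$-class of $a$, which is precisely the semantic relation used by $\exists$ on the \textbf{MK}-frame side. These two observations are what make the translation work, and once they are flagged the induction is routine in both directions, so the same argument proves (1) and (2) with only the starting valuation being swapped.
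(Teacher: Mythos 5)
Your proposal is correct and follows essentially the same route as the paper: the same definitions of the induced valuations (note $\pi^{-1}(\pi(a))=E[a]$, so your $I_w(p^*)=v(p)\cap\pi^{-1}(w)$ is exactly the paper's $v(p)\cap E[a]$), the same structural induction, and the same two key observations that the one-free-variable form of $\varphi^t$ trivializes the inheritor condition in the $\Diamond$-clause and that fibers of $\pi$ are precisely $E$-classes.
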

\vspace{-7mm}
\begin{proof}
(\labelcref{t1}) Define a valuation $I$ on $\mathscr{B}(\mathfrak{F})$ for which
\begin{equation*}
    a\in I_{\pi(a)}(p^*) \iff
    a\models_v p 
\end{equation*}
(equivalently, $I_{\pi(a)}(p^*) = v(p) \cap E[a]$). 

Our proof is by induction on the complexity of the formula $\varphi$. 
The base case follows from the definition of $I$. The induction hypothesis ensures that the claim holds for formulae of the form $\psi\vee\chi$ and $\neg\psi$.

Suppose $\varphi$ is of the form $\exists\psi$. Let 
$\pi(a)\models_I(\exists\psi)^t[a/x]$. Then 
$\pi(a)\models_I\exists x\psi^t$ since $(\exists\psi)^t = \exists x\psi^t$ is a sentence. 
Hence, $\pi(a)\models_I\psi^t[b/x]$ for some $b\in\pi^{-1}(\pi(a))$. Therefore,
$a\mathrel{E}b$ 
and $\pi(b)\models_I\psi^t[b/x]$. By the induction hypothesis, $b\models_v\psi$. Thus, $a\models_v\exists\psi$.
Conversely, let 
$a\models_v\exists\psi$. Then there is $b\in X$ such that $a\mathrel{E}b$ and $b\models_v\psi$. By the induction hypothesis, 
$\pi(b)\models_I\psi^t[b/x]$. Since $\pi(a)=\pi(b)$, we have 
$\pi(a)\models_I\psi^t[b/x]$, which means that $\pi(a)\models_I\exists x\psi^t$. Hence, 
$\pi(a)\models_I(\exists\psi)^t$, and so $\pi(a)\models_I(\exists\psi)^t[a/x]$ since $(\exists\psi)^t$ is a sentence. 

Finally, suppose that $\varphi$ is of the form $\Diamond\psi$. Let  $\pi(a)\models_I(\Diamond\psi)^t[a/x]$. Then $\pi(a)\models_I\Diamond\psi^t[a/x]$. 
Therefore, 
$\pi(c)\models_I\psi^t[b/x]$ for some $b,c\in X$ with $\pi(a)R_0\pi(c)$, $b\in\pi^{-1}(\pi(c))$, and $a\mathrel{R}b$. 
Thus, $\pi(b)\models_I\psi^t[b/x]$. The induction hypothesis then gives $b\models_v\psi$. Consequently, $a\models_v\Diamond\psi$. Conversely, let  $a\models_v\Diamond\psi$. Then there is $b\in X$ such that $a\mathrel{R}b$ and $b\models_v\psi$. The induction hypothesis yields $\pi(b)\models_I\psi^t[b/x]$. Since $a\mathrel{R}b$ implies $\pi(a)R_0\pi(b)$, we obtain
$\pi(a)\models_I\Diamond\psi^t[a/x]$. Thus,
$\pi(a)\models_I(\Diamond\psi)^t[a/x]$. 

(\labelcref{t2}) Define a valuation $v$ on $\mathfrak{F}$ for which 
\begin{equation*}
    a\models_v p \iff 
    a\in I_{\pi(a)}(p^*) 
\end{equation*}
(equivalently, $v(p) = \bigcup_{a\in X} I_{\pi(a)}(p^*)$), 
and proceed as in (\labelcref{t1}). 
\end{proof}
\vspace{-3.5mm}

\begin{thm}\label{t_thm}
    Let $\mathfrak{F}=(X,R,E)$ be an \textbf{MK}-frame. For each $\mathcal{L}_\exists$-formula $\varphi$, 
    \[
    \mathfrak{F}\models\varphi \iff 
    \mathscr{B}(\mathfrak{F})\models \varphi^t.
    \]
\end{thm}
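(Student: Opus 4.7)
The plan is to derive the theorem as a direct consequence of \Cref{models}, which already establishes the crucial pointwise equivalence $\pi(a)\models_I\varphi^t[a/x] \Longleftrightarrow a\models_v\varphi$ for suitably paired valuations. My first step will be to unpack both sides of the biconditional. On the one hand, $\mathfrak{F}\models\varphi$ unfolds to $a\models_v\varphi$ for every valuation $v$ on $\mathfrak{F}$ and every $a\in X$. On the other, since $x$ is the only variable that can occur freely in $\varphi^t$, the universal closure $\forall\vec{x}\,\varphi^t$ reduces to $\forall x\,\varphi^t$, so $\mathscr{B}(\mathfrak{F})\models\varphi^t$ unpacks to: for every valuation $I$, every $w\in X_0$, and every $b\in\pi^{-1}(w)$, $w\models_I\varphi^t[b/x]$.

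For the implication from $\mathfrak{F}\models\varphi$ to $\mathscr{B}(\mathfrak{F})\models\varphi^t$, I would start with an arbitrary valuation $I$ on $\mathscr{B}(\mathfrak{F})$ and a world $w\in X_0$, then apply \crefdefpart{models}{t2} to produce a valuation $v$ on $\mathfrak{F}$ matching $I$ under the translation. For any $a\in\pi^{-1}(w)$, the assumption delivers $a\models_v\varphi$, and the lemma converts this to $w=\pi(a)\models_I\varphi^t[a/x]$. Letting $a$ range over $\pi^{-1}(w)$ yields $w\models_I\forall x\,\varphi^t$, completing this direction.

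For the converse, I would fix a valuation $v$ on $\mathfrak{F}$ and an arbitrary $a\in X$, apply \crefdefpart{models}{t1} to obtain a valuation $I$ matching $v$, and set $w=\pi(a)$. Since $a\in\pi^{-1}(w)$, the hypothesis specializes to $w\models_I\varphi^t[a/x]$, which the lemma transfers back to $a\models_v\varphi$, as required.

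All substantive work has already been absorbed into \Cref{models}, so I do not anticipate any real obstacle. The only place requiring genuine care is the quantifier bookkeeping, namely the observation that $\forall x\,\varphi^t$ at world $w$ corresponds exactly to universal quantification over the fiber $\pi^{-1}(w)$; once that is pinned down, each direction is a one-line invocation of the appropriate half of \Cref{models}.
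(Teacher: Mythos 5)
Your proposal is correct and follows essentially the same route as the paper: both directions reduce to the pointwise equivalence of \Cref{models}, with part (2) used for the forward implication and part (1) for the converse. The only cosmetic difference is that the paper applies the lemma to $\forall\varphi$ (so the fiber quantification is absorbed into the lemma), whereas you quantify over $\pi^{-1}(w)$ explicitly; the content is identical.
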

\begin{proof}
First suppose $\mathfrak{F}\models\varphi$. Let $I$ be a 
valuation on $\mathscr{B}(\mathfrak{F})$. 
By \crefdefpart{models}{t2}, 
there is a valuation $v$ on $\mathfrak F$ 
such that for each $a\in X$ we have $\pi(a)\models_I \forall x \varphi^t$ iff $a \models_v \forall\varphi$. The latter always holds since the validity of $\varphi$ implies the validity of $\forall\varphi$ in $\mathfrak F$. Thus, the former also holds, and hence $\mathscr{B}(\mathfrak{F})\models\varphi^t$.

Next suppose $\mathscr{B}(\mathfrak{F})\models\varphi^t$. Let $v$ be a valuation on $\mathfrak F$. By \crefdefpart{models}{t1}, there is a valuation $I$ on $\mathscr{B}(\mathfrak F)$ such that for each $a\in X$ we have $\pi(a)\models_I \forall x \varphi^t$ iff $a \models_v \forall\varphi$. The former always holds since $\varphi^t$ is valid on $\mathscr{B}(\mathfrak F)$. Thus, the latter also holds, which yields $a\models_v \varphi$. Hence, $\mathfrak{F}\models\varphi$.
%
\end{proof}

The following result is a generalization of \cite[Thm.~3.5]{szk}, which in turn is a generalization of \cite[Thm.~3.5]{onoszk}.


\begin{thm}{\label{translation}}
    Let $\textbf{M}$ be an mm-logic and $\textbf{Q}$ a pm-logic satisfying the following conditions:
    \begin{enumerate}
        \item For each $\mathcal{L}_\exists$-formula $\varphi$,
            \begin{equation*}
                \textbf{M} \vdash \varphi \implies \textbf{Q} \vdash \varphi^t.
            \end{equation*} {\label{trans1}} 
            \vspace{-6mm}
        \item $\textbf{M}$ is complete with respect to a class $\mathsf{C}$ of \textbf{MK}-frames. {\label{trans2}}
        \item $\textbf{Q}$ is strongly sound with respect to the class $\{\mathscr{B}(\mathfrak{F})\mid \mathfrak{F}\in\mathsf{C}\}$. {\label{trans3}}
    \end{enumerate}
    Then $\textbf{M}$ is the monadic fragment of $\textbf{Q}$.
\end{thm}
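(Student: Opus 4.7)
The forward direction $\textbf{M}\vdash\varphi\Rightarrow\textbf{Q}\vdash\varphi^t$ is exactly hypothesis~(\labelcref{trans1}), so the real work lies in the converse. My plan is to argue the contrapositive: I assume $\textbf{M}\not\vdash\varphi$ and produce a Kripke bundle on which $\varphi^t$ fails, which will exclude $\varphi^t$ from $\textbf{Q}$ via hypothesis~(\labelcref{trans3}).

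Concretely, suppose $\textbf{M}\not\vdash\varphi$. By hypothesis~(\labelcref{trans2}), the class $\mathsf{C}$ refutes $\varphi$, so there exists $\mathfrak{F}\in\mathsf{C}$ with $\mathfrak{F}\not\models\varphi$. Applying \Cref{t_thm} to this frame gives $\mathscr{B}(\mathfrak{F})\not\models\varphi^t$. Now suppose for contradiction that $\textbf{Q}\vdash\varphi^t$. Since $\mathscr{B}(\mathfrak{F})$ lies in $\{\mathscr{B}(\mathfrak{G})\mid\mathfrak{G}\in\mathsf{C}\}$, hypothesis~(\labelcref{trans3}) yields $\mathscr{B}(\mathfrak{F})\models^+\varphi^t$. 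Since strong validity requires validity of every substitution instance, in particular of $\varphi^t$ itself, we get $\mathscr{B}(\mathfrak{F})\models\varphi^t$, contradicting what we obtained from \Cref{t_thm}. Hence $\textbf{Q}\not\vdash\varphi^t$, which is the desired contrapositive.

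There is essentially no obstacle here: the theorem is a clean packaging of (\labelcref{trans1}), completeness on the mm-side, the bundle translation lemma \Cref{t_thm}, and strong soundness on the pm-side. The only subtlety worth flagging explicitly in the write-up is the step from $\mathscr{B}(\mathfrak{F})\models^+\varphi^t$ to $\mathscr{B}(\mathfrak{F})\models\varphi^t$, which is immediate from \Cref{svalid}(\labelcref{svalid1}) since $\varphi^t$ is a substitution instance of itself. All the genuine mathematical content has been offloaded to the hypotheses and to \Cref{t_thm}; the proposition itself is a short contrapositive argument.
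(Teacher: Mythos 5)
Your proposal is correct and follows essentially the same route as the paper: reduce to the contrapositive via hypothesis (1), use completeness to find a refuting frame $\mathfrak{F}\in\mathsf{C}$, transfer the refutation to $\mathscr{B}(\mathfrak{F})$ by \Cref{t_thm}, and conclude via strong soundness. The only difference is presentational --- you unfold the final step (strong validity of $\varphi^t$ implies validity, since $\varphi^t$ is a substitution instance of itself) as an explicit contradiction, whereas the paper leaves it implicit.
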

\begin{proof}
Let $\varphi$ be an $\mathcal{L}_\exists$-formula. By (\labelcref{trans1}), it is enough to show that  $\textbf{M}\not\vdash\varphi$ implies $\textbf{Q} \not\vdash\varphi^t$. Suppose $\textbf{M}\not\vdash\varphi$. By (\labelcref{trans2}), there is an \textbf{MK}-frame $\mathfrak{F}\in\mathsf{C}$ such that $\mathfrak{F}\not\models\varphi$.
By \Cref{t_thm}, $\mathscr{B}(\mathfrak{F})\not\models\varphi^t$. By (\labelcref{trans3}),  $\textbf{Q}$ is strongly sound with respect to the Kripke bundle $\mathscr{B}(\mathfrak{F})$. Thus, $\textbf{Q} \not\vdash\varphi^t$.
\end{proof}

\begin{cor}{\label{useful}}
    Let $\textbf{L}$ be a propositional modal logic satisfying the following two conditions:
    \begin{enumerate}
        \item $\textbf{ML}$ is complete with respect to a class $\mathsf{C}$ of \textbf{MK}-frames.
        \item $\textbf{QL}$ is strongly sound with respect to $\{\mathscr{B}(\mathfrak{F})\mid \mathfrak{F}\in\mathsf{C}\}$.
    \end{enumerate}
    Then $\textbf{ML}$ is the monadic fragment of $\textbf{QL}$.
\end{cor}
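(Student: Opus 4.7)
The plan is to reduce \Cref{useful} directly to \Cref{translation} by taking $\textbf{M} = \textbf{ML}$ and $\textbf{Q} = \textbf{QL}$. Conditions (\labelcref{trans2}) and (\labelcref{trans3}) of \Cref{translation} are precisely the hypotheses of the corollary, so the only thing to verify is condition (\labelcref{trans1}): whenever $\textbf{ML} \vdash \varphi$, we have $\textbf{QL} \vdash \varphi^t$.

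I would prove this by induction on the length of a derivation of $\varphi$ in $\textbf{ML}$. The base cases require checking that the $(-)^t$-image of every axiom schema of $\textbf{ML}$ is a theorem of $\textbf{QL}$. For a theorem of $\textbf{L}$, its translation is obtained by replacing each propositional letter $p$ by $p^*(x)$, and the resulting $\mathcal{L}_Q$-formula belongs to $\textbf{QL}$ by uniform substitution applied to the theorems of $\textbf{L}$ (which $\textbf{QL}$ contains by definition). For the $\textbf{S5}$ axioms for $\exists$, the translations are sentences of the form $\exists x\varphi^t \to \exists x\exists x\varphi^t$ (which simplifies appropriately since $x$ is the only free variable) and similar, all of which are theorems of the classical predicate calculus, hence of $\textbf{QL}$. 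The most interesting base case is the connecting axiom: $(\exists\Diamond p \to \Diamond\exists p)^t = \exists x\,\Diamond p^*(x) \to \Diamond \exists x\, p^*(x)$, which is the dual of the converse Barcan formula and is derivable already in $\textbf{QK}$.

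For the inductive step I would verify closure under each inference rule. Modus Ponens and $\Box$-necessitation lift immediately, since $\textbf{QL}$ has the same rules. For $\forall$-necessitation, note that $(\forall\varphi)^t = \forall x\, \varphi^t$, so the rule corresponds to generalization in $\textbf{QL}$. The substitution rule requires establishing the compatibility identity $(\varphi[\psi/p])^t = \varphi^t[\psi^t / p^*(x)]$ by an auxiliary induction on $\varphi$, after which closure under uniform substitution in $\textbf{QL}$ yields the required conclusion.

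The main (though still routine) obstacle is handling the last two axiomatic cases cleanly: verifying the connecting axiom's translation in $\textbf{QK}$, and being careful with the substitution case where one must check that the translation and substitution operators interact correctly (and that no variable capture occurs, which is automatic here because $\varphi^t$ has at most the single free variable $x$). Once these are in hand, condition (\labelcref{trans1}) of \Cref{translation} is established, and the corollary follows.
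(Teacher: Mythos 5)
Your proposal is correct and follows essentially the same route as the paper: both reduce the corollary to \Cref{translation} by noting that conditions (\labelcref{trans2}) and (\labelcref{trans3}) are the hypotheses, and both establish condition (\labelcref{trans1}) by induction on derivation length with the same case analysis (theorems of $\textbf{L}$ via uniform substitution, \textbf{S5} axioms translating to classical predicate theorems, the connecting axiom, the inference rules, and the compatibility identity $(\psi[\alpha/q])^t=\psi^t[\alpha^t/q^*(x)]$ for the substitution case). Your remarks on the converse Barcan formula and variable capture are correct elaborations of steps the paper leaves as routine.
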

\begin{proof}
For each $\varphi\in\text{Form}(\mathcal{L}_\exists)$, we show that $\textbf{ML}\vdash\varphi$ implies $\textbf{QL}\vdash\varphi^t$. 
The proof is by induction on the length of the proof of $\varphi$.
If $\varphi$ is a theorem of $\textbf{L}$ or an instance 
 of the connecting axiom $\exists\Diamond q\to \Diamond\exists q$, 
 then it is straightforward to check that $\varphi^t$ is a theorem of $\textbf{QL}$. If $\varphi$ is an \textbf{S5} axiom for $\exists$ in \textbf{ML}, then it is easy to see that $\varphi^t$ is in fact a theorem of the classical predicate calculus; hence, $\varphi^t$ is a theorem of \textbf{QL}.  

If $\varphi$ is obtained by Modus Ponens, i.e., from
${\psi}$ and $\psi\to\varphi$, then $\textbf{QL}\vdash\psi^t, \psi^t\to\varphi^t$ by the induction hypothesis, so $\textbf{QL}\vdash \varphi^t$ by Modus Ponens for $\textbf{QL}$.
The argument for when $\varphi$ is obtained by $\Box$-necessitation or $\forall$-necessitation is similar.
Finally, suppose $\varphi=\psi[\alpha/q]$
is obtained by substitution. Then 
$\varphi^t=\psi^t[\alpha^t/q^*(x)]$, where $\textbf{QL}\vdash\psi^t$ by the induction hypothesis. Thus, $\textbf{QL}\vdash\varphi^t$ by uniform substitution.
Consequently, \Cref{translation} applies, yielding the result.
\end{proof}

To utilize \Cref{useful}, we must show that a predicate logic $\textbf{QL}$ is strongly sound with respect to a class of Kripke bundles, which involves checking for strong validity of formulae. For propositional formulae, 
there is a simple criterion for checking strong validity. 
Consider a Kripke bundle $\mathfrak{B}=((X,R),\pi,(X_0,R_0))$. For each $n\ge 0$, define the  $n$-fold fiberwise product $(X^n,R^n)$ as follows. If $n=0$, let $X^n=X_0$ and $R^n=R_0$; if $n=1$, let $X^n=X$ and $R^n=R$; and if $n>1$, let $X^n=\bigcup\limits_{w\in X_0}[\pi^{-1}(w)]^n$ and define $R^n$ by  
\begin{equation*}
    \Vec{x}\mathrel{R^n}\Vec{y} \iff 
    x_i\mathrel{R}y_i \text{ for each } i \text{ and } \Vec{x}\textbf{ sub }\Vec{y},
\end{equation*}

where $\textbf{sub}$ is the \emph{subordination relation} (see \cite[Def.~5.3.2]{GSS}) defined by 
\[\Vec{x}\textbf{ sub }\Vec{y} \iff 
\mbox{for each } i,j \, (x_i=x_j \implies y_i=y_j).
\]

The 
relation \textbf{sub} is used to ensure coherence for inheritors in the following sense: suppose there are $a,b,c\in X$ such that $a\mathrel{R}b$ and $a\mathrel{R}c$. In the absence of \textbf{sub}, we may have $(a,a)\mathrel{R^2}(b,c)$ in $X^2$, which is 
not allowed in Kripke bundle semantics since we only evaluate formulae of the form $\Diamond\varphi$ by replacing an individual uniformly with one inheritor (see \Cref{inheritors}). 

\begin{defn}{\label{nthlevel}}\cite[Def.~5.3.2]{GSS}
    For a Kripke bundle $\mathfrak{B}=((X,R),\pi,(X_0,R_0))$, the frame $\mathcal{X}_n=(X^n,R^n)$ is called the $n^{\text{th}}$ level of $\mathfrak{B}$, where $X^n$ and $R^n$ are defined as above.
\end{defn}

Recalling \crefdefpart{svalid}{svalid1}, we have:

\begin{prop}\cite[Prop.~5.3.7]{GSS}{\label{sound}}
    For a Kripke bundle $\mathfrak{B}=((X,R),\pi,(X_0,R_0))$ and a propositional formula $\varphi$, we have $\mathfrak{B}\models^+\hspace{-3pt}\varphi$
    iff
    $\mathcal{X}_n\models\varphi$ for each $n<\omega$.
\end{prop}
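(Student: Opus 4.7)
The plan is to establish the equivalence through a correspondence between propositional valuations on $\mathcal{X}_n$ and interpretations of $n$-ary predicates on $\mathfrak{B}$, preceded by a canonicalization step for substitutions.

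First I would reduce strong validity of a propositional $\varphi$ to a uniform statement indexed by $n$. For an arbitrary substitution $\sigma$ whose images $\sigma(p_1),\ldots,\sigma(p_k)$ have combined free variables contained in $\vec{y}=(y_1,\ldots,y_n)$, introduce fresh $n$-ary predicates $P_1,\ldots,P_k$ with $I_w(P_i)=\{\vec{a}\in[\pi^{-1}(w)]^n : w\models_I \sigma(p_i)[\vec{a}/\vec{y}]\}$. A straightforward semantic check gives $\mathfrak{B}\models\varphi^{\sigma}$ iff $\mathfrak{B}\models\varphi[P_i(\vec{y})/p_i]$ under $I$. Hence $\mathfrak{B}\models^+\hspace{-3pt}\varphi$ is equivalent to the statement: for every $n<\omega$ and every interpretation $I$ of $n$-ary predicates $P_1,\ldots,P_k$, the canonical instance $\varphi[P_i(\vec{y})/p_i]$ is valid in $\mathfrak{B}$.

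Next I would set up the correspondence. Given $I$, define a valuation $V_I$ on $\mathcal{X}_n$ by $V_I(p_i)=\bigcup_{w\in X_0}I_w(P_i)\subseteq X^n$; given $V$, define $I_V$ fiberwise by $I_V(P_i)_w=V(p_i)\cap[\pi^{-1}(w)]^n$. The central lemma, proved by induction on the propositional formula $\psi$, states that for every $w\in X_0$ and every $\vec{a}\in[\pi^{-1}(w)]^n$,
\[
\vec{a}\models_{V_I}\psi \text{ in } \mathcal{X}_n \iff w\models_I \psi[P_i(\vec{y})/p_i][\vec{a}/\vec{y}] \text{ in } \mathfrak{B}.
\]
Base and Boolean cases are immediate from the definitions. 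In the $\Diamond$-case the crucial observation is that an $R^n$-edge $\vec{a}\mathrel{R^n}\vec{b}$ with both endpoints lying in single fibers $w$ and $v$ corresponds precisely to the Kripke bundle $\Diamond$-clause: $w\mathrel{R_0}v$, componentwise $a_i\mathrel{R}b_i$ with each $b_i\in\pi^{-1}(v)$, plus the subordination matching the inheritor condition of \Cref{inheritors}. The case $n=0$ is handled directly, since $\mathcal{X}_0=(X_0,R_0)$ and valuations on $\mathcal{X}_0$ match interpretations of $0$-ary predicates on $\mathfrak{B}$.

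With the lemma the $(\Leftarrow)$ direction is immediate: given any interpretation $I$, the lemma applied to $V=V_I$ transports validity of $\varphi$ on $\mathcal{X}_n$ to validity of the canonical substitution instance in $\mathfrak{B}$, and canonicalization extends this to every substitution, yielding strong validity. For $(\Rightarrow)$, strong validity gives validity of the canonical instance for every $I$, and applying the lemma to $V=V_I$ yields $\vec{a}\models_V\varphi$ at every same-fiber tuple. The main obstacle I anticipate is completing $(\Rightarrow)$ at mixed-fiber tuples $\vec{a}\in X^n$, since the Kripke bundle semantics only directly constrains same-fiber tuples. A natural route is to note that under $V=V_I$ every atomic formula is false at mixed-fiber tuples; then by a secondary induction that tracks how the fiber structure of $\vec{a}$ evolves along $R^n$-paths, the evaluation of $\varphi$ at mixed-fiber $\vec{a}$ reduces to evaluations at same-fiber tuples of smaller products $\mathcal{X}_{n'}$, where the lemma already applies. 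The converse reading suffices as well: an arbitrary valuation $V$ on $\mathcal{X}_n$ may be replaced on mixed-fiber tuples by $V_{I_V}$ without affecting its values at same-fiber tuples, reducing the general case to the one handled by the lemma.
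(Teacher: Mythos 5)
The paper does not actually prove this proposition---it is quoted from \cite[Prop.~5.3.7]{GSS}---so there is no in-paper argument to compare against. Your overall strategy (reducing an arbitrary substitution instance to a canonical one with fresh $n$-ary predicates, setting up the fiberwise correspondence between interpretations $I$ and valuations $V$, and closing with a truth lemma by induction) is exactly the standard proof of this fact. The genuine gap is the point you flag yourself: mixed-fiber tuples. Neither of your proposed repairs works. The claim that replacing $V$ by $V_{I_V}$ on mixed-fiber tuples does not affect its values at same-fiber tuples is false: a same-fiber tuple can have mixed-fiber $R^n$-successors (take $\vec a=(a_1,a_2)$ over $w$ with $a_1\neq a_2$ and $b_i\in R[a_i]$ lying over distinct $R_0$-successors of $w$; the condition $\textbf{sub}$ is then vacuous), so the truth of $\Diamond\psi$ at $\vec a$ genuinely depends on the valuation at mixed tuples. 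For the same reason your truth lemma already fails at same-fiber tuples for formulas as simple as $\Diamond\neg p$: in $\mathcal{X}_n$ the tuple $\vec a$ sees the mixed successor, where every atom is false under $V_I$, whereas the bundle's $\Diamond$-clause quantifies only over same-fiber inheritor tuples. The ``secondary induction reducing to smaller products'' is not developed, and I do not see how it could recover this.

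In fact, if $X^n$ is literally the full Cartesian product, the proposition is false, so no proof can succeed. Take $X_0=\{w,v_1,v_2\}$ with $R_0=\{(w,v_1),(w,v_2)\}$, $\pi^{-1}(w)=\{a_1,a_2\}$, $\pi^{-1}(v_i)=\{b_i\}$, and $R=\{(a_i,b_j)\mid i,j\in\{1,2\}\}$. The formula $\varphi=\neg\bigl(\Diamond(p\wedge q)\wedge\Diamond(p\wedge\neg q)\wedge\Diamond(\neg p\wedge q)\wedge\Diamond(\neg p\wedge\neg q)\bigr)$ is strongly valid in this bundle: from any same-fiber tuple over $w$ there are exactly two inheritor tuples, one over each $v_i$, so at most two of the four mutually exclusive types can be witnessed. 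Yet $\varphi$ fails at $(a_1,a_2)$ in the full product $\mathcal{X}_2$, which has the four successors $(b_i,b_j)$, under a valuation realizing all four types there. The resolution is that the $n^{\text{th}}$ level in \cite[Def.~5.3.2]{GSS} is taken over same-fiber tuples only, i.e., $X^n=\bigcup_{w\in X_0}[\pi^{-1}(w)]^n$ (the wording in \Cref{nthlevel} is imprecise on this point). With that domain, the $R^n$-successors of a tuple over $w$ are precisely the inheritor tuples appearing in the bundle's $\Diamond$-clause, the mixed-tuple obstacle disappears, and your canonicalization plus truth lemma close the proof cleanly.
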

As a consequence of \Cref{useful} and \Cref{sound}, we obtain: 


\begin{thm}{\label{folk}}
    \begin{enumerate}
        \item[]
        \item $\textbf{MK}$ is the monadic fragment of $\textbf{QK}$. \label{folk1}
        \item $\textbf{MS4}$ is the monadic fragment of $\textbf{QS4}$. \label{folk2}
    \end{enumerate}
\end{thm}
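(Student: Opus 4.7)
The plan is to apply \Cref{useful} to $\textbf{L}=\textbf{K}$ and then to $\textbf{L}=\textbf{S4}$. In each case I take $\mathsf{C}$ to be the class of finite $\textbf{ML}$-frames, so that condition (1) of \Cref{useful}---completeness of $\textbf{ML}$ with respect to $\mathsf{C}$---is immediate from \Cref{thm: completeness}. The real work lies in condition (2): showing that $\textbf{QL}$ is strongly sound with respect to $\{\mathscr{B}(\mathfrak{F})\mid \mathfrak{F}\in\mathsf{C}\}$.

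By \Cref{sv}, the set of strongly valid formulae in a Kripke bundle already forms a pm-logic, so it suffices to verify strong validity of the axioms of $\textbf{QL}$. The axioms of the classical predicate calculus are strongly valid in any Kripke bundle (a routine check, as they involve no modal operators). For the theorems of $\textbf{L}$, which are purely propositional, \Cref{sound} reduces the task to showing that for each $n<\omega$ the $n^{\text{th}}$ level $\mathcal{X}_n$ of $\mathscr{B}(\mathfrak{F})$ is a frame for $\textbf{L}$.

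For $\textbf{L}=\textbf{K}$ this is automatic, since every Kripke frame validates $\textbf{K}$, giving part (1). For $\textbf{L}=\textbf{S4}$, I would verify that each $\mathcal{X}_n$ is a preorder. When $n\ge 1$, the relation $R^n$ inherits reflexivity from $R$ (and from the obvious reflexivity of $\textbf{sub}$), and transitivity follows because transitivity of $R$ gives $x_i\mathrel{R}z_i$ pointwise while $\textbf{sub}$ composes: if $x_i=x_j$ forces $y_i=y_j$ and this in turn forces $z_i=z_j$, then $\vec{x}\,\textbf{sub}\,\vec{z}$. For $n=0$, reflexivity of $R_0$ on $X_0$ is immediate from $x\mathrel{R}x\mathrel{E}x$.

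The one genuinely nontrivial step---and the main (mild) obstacle---is transitivity of $R_0$, which is where the plan exploits the structure of an $\textbf{MK}$-frame. Given $[x]\mathrel{R_0}[y]\mathrel{R_0}[z]$, unpacking the definition of $Q$ yields $u,v\in X$ with $x\mathrel{R}u\mathrel{E}y$ and $y\mathrel{R}v\mathrel{E}z$. Applying the commutativity condition to $u\mathrel{E}y$ and $y\mathrel{R}v$ produces $w\in X$ with $u\mathrel{R}w$ and $w\mathrel{E}v$; then $x\mathrel{R}w$ by transitivity of $R$ and $w\mathrel{E}z$ by transitivity of $E$, so $x\mathrel{Q}z$, i.e., $[x]\mathrel{R_0}[z]$. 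This verifies condition (2) for $\textbf{QS4}$ and completes part (2). The commutativity axiom built into \textbf{MK}-frames is precisely what makes the $E$-skeleton behave well, and once this is in hand, both items follow from the machinery already assembled.
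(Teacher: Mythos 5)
Your proposal is correct and follows essentially the same route as the paper: apply \Cref{useful} with $\mathsf{C}$ the finite $\textbf{ML}$-frames (completeness from \Cref{thm: completeness}), and verify strong soundness via \Cref{sv} and \Cref{sound} by checking that each level $\mathcal{X}_n$ of $\mathscr{B}(\mathfrak{F})$ validates $\textbf{L}$. The only difference is that you spell out the verifications (reflexivity and transitivity of $R^n$ via $\textbf{sub}$, and transitivity of $R_0$ via the commutativity condition) that the paper dismisses as straightforward.
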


\begin{proof}

(\labelcref{folk1}) By \crefdefpart{thm: completeness}{MK}, $\textbf{MK}$ is complete with respect to the class of all finite $\textbf{MK}$-frames. By \Cref{sv}, for each $\textbf{MK}$-frame $\mathfrak{F}$, we have $\mathscr{B}(\mathfrak{F})\models^+\hspace{-3pt}\textbf{QK}$. Therefore, \Cref{useful} applies.

(\labelcref{folk2}) By \crefdefpart{thm: completeness}{MS4},  $\textbf{MS4}$ is complete with respect to the class of all finite $\textbf{MS4}$-frames. Observe that for a Kripke bundle $\mathfrak{B}=((X,R),\pi,(X_0,R_0))$, we have $\mathfrak{B}\models^+\hspace{-3pt}\textbf{QS4}$ iff $(X,R)$ is a preorder. To see this, by \Cref{sound},  $\mathfrak{B}\models^+\hspace{-3pt}q\to\Diamond q,\Diamond \Diamond q\to \Diamond q$ iff $\mathcal{X}_n$ is a preorder for each $n<\omega$. Hence, if $\mathfrak{B}\models^+\hspace{-3pt}\textbf{QS4}$, then $\mathcal{X}_1=(X,R)$ must be a preorder. Conversely, if $\mathcal{X}_1$ is a preorder, it is straightforward to verify that $\mathcal{X}_n$ is a preorder for each $n<\omega$, which implies that $\mathfrak{B}\models^+\hspace{-3pt}\textbf{QS4}$.

Now, for an $\textbf{MS4}$-frame $\mathfrak{F}=(X,R,E)$, we see that in the Kripke bundle $\mathscr{B}(\mathfrak{F})=((X,R),\pi_X,(X_0,R_0))$, $(X,R)$ is indeed a preorder. Thus, \Cref{useful} applies. 
\end{proof}

\begin{rmk}
The above result is
considered to be 
folklore, but we are not aware of any proof in the literature. 
For example, the easy implication $\textbf{MS4}\vdash \varphi\implies\textbf{QS4}\vdash\varphi^t$ is shown in \cite[Thm.~8]{FS}, 
but the converse is not discussed there.
\end{rmk}

We note that \Cref{useful} can be used to prove that
\textbf{ML} is the monadic fragment of \textbf{QL} for other propositional modal logics \textbf{L}. 
In particular, we will use \Cref{useful} to prove that
$\textbf{MGrz}$ is the monadic fragment of $\textbf{QGrz}$. For this, we require $\textbf{MGrz}$ to be complete with respect to a suitable class $\mathsf{C}$ of $\textbf{MK}$-frames. We will show that $\mathsf{C}$ can be taken to be all finite $\textbf{MGrz}$-frames.


\section{Monadic \textbf{Grz}} {\label{MGrz}}

We recall that $\textbf{MGrz}$ is the least monadic extension of $\textbf{Grz}$.  
It is well known that \textbf{Grz} has the fmp (see, e.g., \cite[Thm.~5.51]{CZ}).  
This can be proved by selecting \emph{maximal} points from a descriptive frame. 
Adopting this construction to \textbf{MGrz}
requires selecting points that we term \emph{strongly maximal}. In this section, we show that we have a sufficient supply of these, and discuss some of their key properties.


We start by recalling the algebraic semantics of modal logic (see, e.g., \cite[Ch.~7]{CZ}).
A {\em modal algebra} is a pair $\mathbbm{B}=(B,\opc)$, where $B$ is a Boolean algebra and $\opc$ is a unary function on $B$ that preserves finite joins. We say that 
\begin{itemize}
    {\item $\mathbbm{B}$} is an {\em $\bf S4$-algebra} if additionally $a\leq\opc a$ and $\opc\opc a\leq \opc a$ for any $a\in B$;
    {\item $\mathbbm{B}$} is an {\em $\bf S5$-algebra} if it is an \textbf{S4}-algebra and $a\leq \neg\opc\neg\opc a$ for any $a\in B$;
    {\item $\mathbbm{B}$} is a {\em $\bf Grz$-algebra} if it is an \textbf{S4}-algebra and $a\leq \opc(a\wedge\neg\opc(\opc a\wedge\neg a))$ for any $a\in B$.
\end{itemize}

A modal formula $\varphi$ is {\em true} 
in $\mathbbm{B}$ if when interpreting propositional variables occurring in $\varphi$ as elements of $B$, the corresponding polynomial 
evaluates to $1$
in $\mathbbm{B}$.



\begin{defn}\ 
    \begin{enumerate}
        \item A {\em monadic modal algebra} or simply an {\em mm-algebra} is a tuple $\mathbbm{B}=(B,\Diamond,\exists)$ such that $(B,\Diamond)$ is a modal algebra, $(B,\exists)$ is an {\textbf{S5}}-algebra, and $\Diamond$ and $\exists$ are connected by $\exists\Diamond a\leq \Diamond\exists a$ for each $a\in B$.

        \item A map $f\colon(B,\Diamond,\exists)\to(B',\Diamond',\exists')$ is a {\em morphism of mm-algebras} if $f$ is a morphism of Boolean algebras such that $f(\Diamond a)=\Diamond'f(a)$ and $f(\exists a)=\exists'f(a)$ for each $a\in B$.
    \end{enumerate}
    
\end{defn}


%


Clearly, mm-algebras provide an algebraic semantics for mm-logics. We denote the 
category of mm-algebras and their morphisms by $\mathsf{MMA}$.

 
J\'onsson-Tarski duality for Boolean algebras with operators yields a convenient representation of mm-algebras. We recall that a subset of a topological space $X$ is {\em clopen} if it is both closed and open, and that $X$ is {\em zero-dimensional} if clopen sets form a basis for $X$.
Given a binary relation $R$ on $X$ and $U\subseteq X$, let  
\[
R[U] = \{ x \in X \mid u\mathrel{R}x \mbox{ for some } u\in U\},
\]
and
\[
R^{-1}[U] = \{ x \in X \mid x\mathrel{R}u \mbox{ for some } u\in U\}.
\]

If $U=\{x\}$, then we write $R[x]$ and $R^{-1}[x]$ instead of $R[U]$ and $R^{-1}[U]$  (see 
\crefdefpart{basics}{basics1}). 

\begin{defn}
\begin{enumerate}
    \item[]
    \item \cite[Sec.~II.4]{johnstone} A topological space $X$ is said to be a {\em Stone space} if it is compact, Hausdorff, and zero-dimensional.
    \item \cite[Sec.~3.1]{esakia2019} A relation $R$ on a Stone space $X$ is said to be {\em continuous} if $R[x]$ is closed for each $x\in X$ and $R^{-1}[U]$ is clopen for each clopen subset $U$ of $X$.
\end{enumerate}
\end{defn}

We next recall descriptive $\bf{MK}$-frames (see \cite[Def.~2.18]{GBm}).

\begin{defn}\ 
\begin{enumerate}
    \item A {\em descriptive $\bf{MK}$-frame} or simply a {\em $\bf{DMK}$-frame} is an $\bf{MK}$-frame $\mathfrak{F}=(X,R,E)$ such that $X$ is a Stone space and $R$, $E$ are continuous relations on $X$. 
    \item A {\em {\bf DMK}-morphism} between two {\bf DMK}-frames $\mathfrak F=(X,R,E)$ and $\mathfrak F'=(X',R',E')$ is a continuous map $f\colon X\to X'$ that is a morphism of the underlying \textbf{MK}-frames.
    \item Let $\mathsf{DMKF}$ be the category of {\bf DMK}-frames and {\bf DMK}-morphisms.
\end{enumerate}
\end{defn}


Specializing J\'onsson-Tarski duality to the category of mm-algebras yields the following result.

\begin{thm} \label{thm: JT for monadic}
    $\mathsf{MMA}$ is dually equivalent to $\mathsf{DMKF}$.
\end{thm}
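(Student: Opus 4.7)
The plan is to derive this as an application of J\'onsson-Tarski duality applied separately to the two modal operators $\Diamond$ and $\exists$, combined with a verification that the extra algebraic axioms correspond to the extra frame-theoretic conditions. Starting from an mm-algebra $(B,\Diamond,\exists)$, I would take $X$ to be the ultrafilter space of $B$ with the Stone topology, and define $R$ and $E$ on $X$ by the standard J\'onsson-Tarski recipe for $\Diamond$ and $\exists$ respectively. General J\'onsson-Tarski duality already supplies that both relations are continuous and that the clopen algebra of $X$ equipped with $R^{-1}[-]$ and $E^{-1}[-]$ recovers $(B,\Diamond,\exists)$ up to isomorphism.

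Next I would verify that $E$ is an equivalence relation. This is the well-known correspondence between \textbf{S5}-algebras and Stone spaces with a continuous equivalence relation: reflexivity comes from $a\le\exists a$, transitivity from $\exists\exists a\le\exists a$, and symmetry from $a\le\forall\exists a$, each by a standard ultrafilter argument.

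The main content, and the step I expect to be the main obstacle, is showing that the connecting axiom $\exists\Diamond a\le\Diamond\exists a$ corresponds exactly to the commutativity condition on $(R,E)$. One direction is easy: if commutativity holds, then for any clopen $U$, any $x\in\exists\Diamond U=E^{-1}[R^{-1}[U]]$ yields $y,z$ with $xEy$, $yRz$, $z\in U$; applying commutativity produces $u$ with $xRu$ and $uEz$, so $x\in\Diamond\exists U$. For the converse, I would argue by contradiction. Suppose $xEy$ and $yRz$ but no $u$ with $xRu$ and $uEz$ exists. Then the closed sets $R[x]$ and $E[z]$ are disjoint, and Stone separation furnishes a clopen $U$ with $R[x]\subseteq U$ and $E[z]\cap U=\emptyset$. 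Setting $V=X\setminus U$, we obtain $z\in\forall V$, hence $y\in\Diamond\forall V$, hence $x\in\exists\Diamond\forall V$. The connecting axiom applied to $a=\forall V$ then gives $x\in\Diamond\exists\forall V$, so some $u\in R[x]$ lies in $\exists\forall V$; symmetry and transitivity of $E$ force $E[u]\subseteq V$, and in particular $u\in V$, contradicting $u\in R[x]\subseteq U$.

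To finish, I would verify the reverse direction of the object correspondence together with the morphism part. Given a \textbf{DMK}-frame $(X,R,E)$, the clopen algebra with $\Diamond U=R^{-1}[U]$ and $\exists U=E^{-1}[U]$ is an mm-algebra: the \textbf{S5}-axioms for $\exists$ are just $E$ being an equivalence, and the connecting axiom is the easy direction of the bullet above. On morphisms, a \textbf{DMK}-morphism $f\colon(X,R,E)\to(X',R',E')$ is required to be a p-morphism for both $R$ and $E$, which matches exactly the two operator-commutation conditions required of a morphism of mm-algebras; thus the preimage map $f^{-1}$ on clopens is a morphism of mm-algebras, and conversely any such morphism dualizes to a \textbf{DMK}-morphism. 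The naturality of the resulting unit and counit is inherited directly from J\'onsson-Tarski duality, upgrading the two assignments to a dual equivalence of categories.
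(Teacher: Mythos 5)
Your proposal is correct and follows essentially the same route as the paper, which simply specializes J\'onsson--Tarski duality to mm-algebras (citing \cite[p.~433]{GBm}) and records the functors in \Cref{duality}; note that the paper's $\exists_E U = E[U]$ agrees with your $E^{-1}[U]$ by symmetry of $E$, and its $E_\exists$ (defined via $\exists^{-1}[u]=\exists^{-1}[v]$) coincides with the standard J\'onsson--Tarski relation for an \textbf{S5}-operator. The one piece of genuine content the paper leaves implicit --- that the connecting axiom $\exists\Diamond a\le\Diamond\exists a$ on clopens is equivalent to the commutativity condition on $(R,E)$ --- is exactly the clopen-separation argument you supply, and it is correct.
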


\begin{rmk}{\label{duality}}
The functors establishing the above dual equivalence are constructed as follows: the functor ${\text{Clop}\colon\mathsf{DMKF}\to\mathsf{MMA}}$ associates with each $\textbf{DMK}$-frame $(X,R,E)$ the mm-algebra $({B}(X), \Diamond_R, \exists_E)$, where $B(X)$ is the Boolean algebra of clopen sets of $X$, $\Diamond_R U = R^{-1}[U]$, and $\exists_E U = E[U]$ for each $U\in B(X)$. Moreover, it associates with each \textbf{DMK}-morphism $f\colon X\to X'$ the mm-morphism $\text{Clop}(f)\colon B(X')\to B(X)$ given by $\text{Clop}(f)(U)=f^{-1}[U]$ for each $U\in B(X')$. 

The functor $\text{Uf}\colon\mathsf{MMA}\to\mathsf{DMKF}$ associates with each mm-algebra $(B,\Diamond,\exists)$ the $\bf{DMK}$-frame $(X_B,R_\Diamond,E_\exists)$ defined as follows: $X_B$ is the Stone space of $B$; that is, the space of ultrafilters of $B$ whose basic open sets are $\sigma(a)=\{u\in X_B \mid a\in u \}$ for each $a\in B$. 
The relation $R_\Diamond$ is given by 
\begin{equation*}
    u\mathrel{R_\Diamond} v\iff v\subseteq \Diamond^{-1}[u], 
\end{equation*}
and the relation 
$E_\exists$ 
by
\begin{equation*}
    u\mathrel{E_\exists} v\iff \exists^{-1}[u]=\exists^{-1}[v].
\end{equation*}
 Moreover, it associates with each mm-morphism $f \colon B \to B'$ the $\bf{DMK}$-morphism $\text{Uf}(f)\colon X_{B'} \to X_B$ given by $\text{Uf}(f)(v) = f^{-1}[v]$ for each $v\in X_{B'}$.
\end{rmk}



\begin{defn}
    Let \textbf{M} be an mm-logic, $\mathbbm{B} = (B,\Diamond,\exists)$ an mm-algebra, and $\mathfrak F = (X,R,E)$ a \textbf{DMK}-frame. We say that:
    \begin{enumerate}
        \item $\mathbbm{B}$ is an {\em {\bf M}-algebra} if the theorems of \textbf{M} are true in $\mathbbm{B}$;
        \item $\mathfrak F$ is a {\em descriptive {\bf M}-frame} if its dual $\text{Clop}(\mathfrak{F})$ is an \textbf{M}-algebra.
    \end{enumerate}
\end{defn}

Let $(B,\Diamond,\exists)$ be an mm-algebra. Then $a\leq\Diamond a$ for each $a\in B$ iff $R_\Diamond$ is reflexive, and $\Diamond\Diamond a\leq \Diamond a$ for each $a\in B$ iff $R_\Diamond$ is transitive \cite[Thm.~3.5]{jonsson}. Therefore, 
$(B,\Diamond,\exists)$ is an \textbf{MS4}-algebra iff $(X_B,R_\Diamond, E_\exists)$ is a preordered \textbf{DMK}-frame. Thus, descriptive \textbf{MS4}-frames are exactly preordered \textbf{DMK}-frames.

Let $\mathsf{MS4A}$ be the full subcategory of $\mathsf{MMA}$ consisting of \textbf{MS4}-algebras, and let $\mathsf{DMS4F}$ be the full subcategory of $\mathsf{DMKF}$ consisting of descriptive \textbf{MS4}-frames.
Then, by the previous paragraph, \Cref{thm: JT for monadic} specializes to the following:

\begin{thm}
    $\mathsf{MS4A}$ is dually equivalent to $\mathsf{DMS4F}$.
\end{thm}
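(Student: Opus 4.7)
The plan is to derive the desired dual equivalence by restricting the Jónsson--Tarski duality of \Cref{thm: JT for monadic} to the full subcategories $\mathsf{MS4A} \subseteq \mathsf{MMA}$ and $\mathsf{DMS4F} \subseteq \mathsf{DMKF}$. Since a dual equivalence automatically restricts to a dual equivalence between any pair of full subcategories that are matched by the functors (because the unit and counit components at objects of a full subcategory remain isomorphisms in that subcategory), it suffices to verify two containments:
\begin{enumerate}
    \item[(i)] $\mathrm{Uf}$ sends $\mathsf{MS4A}$ into $\mathsf{DMS4F}$;
    \item[(ii)] $\mathrm{Clop}$ sends $\mathsf{DMS4F}$ into $\mathsf{MS4A}$.
\end{enumerate}

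For (i), I will invoke the standard Jónsson--Tarski correspondences recalled just before the theorem: for an mm-algebra $(B,\Diamond,\exists)$, the inequality $a \leq \Diamond a$ for all $a \in B$ is equivalent to reflexivity of $R_\Diamond$, and $\Diamond\Diamond a \leq \Diamond a$ for all $a \in B$ is equivalent to transitivity of $R_\Diamond$ (cited as \cite[Thm.~3.5]{jonsson}). Thus if $\mathbbm{B}$ is an \textbf{MS4}-algebra, then $R_\Diamond$ is a preorder, so $\mathrm{Uf}(\mathbbm{B}) = (X_B, R_\Diamond, E_\exists)$ is a preordered \textbf{DMK}-frame, i.e.\ an object of $\mathsf{DMS4F}$.

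For (ii), let $\mathfrak{F} = (X,R,E)$ be a descriptive \textbf{MS4}-frame. By definition, this means that $\mathrm{Clop}(\mathfrak{F})$ satisfies every theorem of \textbf{MS4}, in particular the \textbf{S4} axioms for $\Diamond_R$ and the \textbf{S5} axioms for $\exists_E$, together with the connecting axiom; hence $\mathrm{Clop}(\mathfrak{F})$ is an \textbf{MS4}-algebra. (Equivalently, $R$ must be a preorder by the correspondences cited above applied to $\mathrm{Clop}(\mathfrak{F})$, and the preorder condition is preserved by $\mathrm{Clop}$ since truth of the \textbf{S4} axioms in $\mathrm{Clop}(\mathfrak{F})$ is precisely what preorder-ness of $R$ gives.)

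Once (i) and (ii) are established, the restricted functors
\[
\mathrm{Clop} \colon \mathsf{DMS4F} \to \mathsf{MS4A}, \qquad \mathrm{Uf} \colon \mathsf{MS4A} \to \mathsf{DMS4F}
\]
inherit the natural isomorphisms $\eta$ and $\varepsilon$ from \Cref{thm: JT for monadic}, since $\mathsf{MS4A}$ and $\mathsf{DMS4F}$ are full subcategories and each component of $\eta$ and $\varepsilon$ lies over an object already in the appropriate subcategory. I do not anticipate a genuine obstacle here; the only point requiring care is the bookkeeping in (i), namely translating the algebraic \textbf{S4} inequalities into the relational conditions of reflexivity and transitivity on $R_\Diamond$, which is fully handled by the Jónsson--Tarski theorem.
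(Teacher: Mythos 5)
Your proposal is correct and follows essentially the same route as the paper: the paper likewise notes (via the J\'onsson--Tarski correspondence for the \textbf{S4} inequalities) that descriptive \textbf{MS4}-frames are exactly the preordered \textbf{DMK}-frames, and then obtains the result by specializing the dual equivalence of \Cref{thm: JT for monadic} to the full subcategories $\mathsf{MS4A}$ and $\mathsf{DMS4F}$. Your part (ii) is, as you observe, immediate from the paper's definition of a descriptive \textbf{M}-frame, and the restriction argument for full subcategories matched by the functors is exactly what the paper implicitly uses.
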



To characterize descriptive \textbf{MGrz}-frames, 
we require the following (see, e.g., \cite[Def.~1.4.9]{esakia2019}): 

\begin{defn}
    Let $\mathfrak{F}=(X,R,E)$ be a descriptive {\bf MS4}-frame and $U\subseteq X$. 
    \begin{enumerate}
        \item A point $x\in U$ is {\em quasi-maximal} in $U$ if $x\mathrel{R}y$ implies $y\mathrel{R}x$ for each $y\in U$. We let ${\textbf{qmax}}_R U$ denote the set of quasi-maximal points of $U$.
        \item A point $x\in U$ is {\em maximal} in $U$ if $x\mathrel{R}y$ implies $x=y$ for each $y\in U$. We let ${\textbf{max}}_R U$ denote the set of maximal points of $U$.
    \end{enumerate}
\end{defn}

%
%

\begin{rmk}
Clearly ${\textbf{max}}_R U\subseteq {\textbf{qmax}}_R U$. However, the reverse inclusion does not hold in general.
For instance, consider the {\bf MS4}-frame $(X,R,E)$, where $X=\{a,b\}$, $R=X^2$, and $E$ is the diagonal (see the diagram below, where the quasi-order $R$ is indicated with black arrows and circles, and $E$-clusters are shown in blue). Since $X$ is finite, giving $X$ the discrete topology turns $(X,R,E)$ into a descriptive {\bf MS4}-frame. Moreover, 
$\textbf{qmax}_R X=X$, while $\textbf{max}_R X=\varnothing$. Thus, ${\textbf{qmax}}_R X \not\subseteq {\textbf{max}}_R X$.
In fact, ${\textbf{qmax}}_R U \subseteq {\textbf{max}}_R U$ iff the $R$-cluster $E_R[x] := \{ y \in X \mid x \mathrel{R} y$ and $y \mathrel{R} x \}$ of each $x \in {\textbf{qmax}}_R U$ is the singleton $\{x\}$.

\begin{center}
\tikzset{every picture/.style={line width=0.75pt}}
\begin{tikzpicture}[x=0.75pt,y=0.75pt,yscale=-1,xscale=1, scale=0.6]
\draw  [color={rgb, 255:red, 74; green, 144; blue, 226 }  ,draw opacity=1 ] (14,163) .. controls (14,158.58) and (17.58,155) .. (22,155) -- (76,155) .. controls (80.42,155) and (84,158.58) .. (84,163) -- (84,187) .. controls (84,191.42) and (80.42,195) .. (76,195) -- (22,195) .. controls (17.58,195) and (14,191.42) .. (14,187) -- cycle ;
\draw  [color={rgb, 255:red, 74; green, 144; blue, 226 }  ,draw opacity=1 ] (16,67) .. controls (16,62.58) and (19.58,59) .. (24,59) -- (78,59) .. controls (82.42,59) and (86,62.58) .. (86,67) -- (86,91) .. controls (86,95.42) and (82.42,99) .. (78,99) -- (24,99) .. controls (19.58,99) and (16,95.42) .. (16,91) -- cycle ;
\draw   (40.3,175) .. controls (40.3,170.2) and (44.2,166.3) .. (49,166.3) .. controls (53.8,166.3) and (57.7,170.2) .. (57.7,175) .. controls (57.7,179.8) and (53.8,183.7) .. (49,183.7) .. controls (44.2,183.7) and (40.3,179.8) .. (40.3,175) -- cycle ;
\draw   (42.3,79) .. controls (42.3,74.2) and (46.2,70.3) .. (51,70.3) .. controls (55.8,70.3) and (59.7,74.2) .. (59.7,79) .. controls (59.7,83.8) and (55.8,87.7) .. (51,87.7) .. controls (46.2,87.7) and (42.3,83.8) .. (42.3,79) -- cycle ;
\draw    (57.7,175) .. controls (69.69,156.08) and (70.94,102.38) .. (60.52,80.61) ;
\draw [shift={(59.7,79)}, rotate = 61.25] [color={rgb, 255:red, 0; green, 0; blue, 0 }  ][line width=0.75]    (10.93,-4.9) .. controls (6.95,-2.3) and (3.31,-0.67) .. (0,0) .. controls (3.31,0.67) and (6.95,2.3) .. (10.93,4.9)   ;
\draw    (42.3,79) .. controls (31.28,98.11) and (30.06,152.77) .. (39.55,173.48) ;
\draw [shift={(40.3,175)}, rotate = 242.03] [color={rgb, 255:red, 0; green, 0; blue, 0 }  ][line width=0.75]    (10.93,-4.9) .. controls (6.95,-2.3) and (3.31,-0.67) .. (0,0) .. controls (3.31,0.67) and (6.95,2.3) .. (10.93,4.9)   ;
\draw (66.7,172.4) node [anchor=north west][inner sep=0.75pt]    {$a$};
\draw (26,62.4) node [anchor=north west][inner sep=0.75pt]    {$b$};
\end{tikzpicture}
\end{center}
\end{rmk}

%

%


The following definition is a direct adaptation of \cite[Def.~3.5.4]{esakia2019} to the monadic setting. 

\begin{defn}{\label{passive}}
Let $\mathfrak{F}=(X,R,E)$ be a descriptive \textbf{MS4}-frame. A point $x$ in a clopen subset $U$ of $X$ is an \emph{active point} if there exist $y,z\in X$ such that $x\mathrel{R}y\mathrel{R}z$, $y\notin U$, and $z\in U$. Otherwise, $x$ is a \emph{passive point}. 
\end{defn}

In other words, $x$ is an active point of $U$ if it is possible to exit and re-enter $U$ via the relation $R$; see the diagram below. 

\begin{center}
\tikzset{every picture/.style={line width=0.75pt}} 

\begin{tikzpicture}[x=0.75pt,y=0.75pt,yscale=-1,xscale=1,scale=0.6]
\draw  [color={rgb, 255:red, 255; green, 0; blue, 0 }  ,draw opacity=1 ] (100,101.4) -- (200,101.4) -- (200,183.4) -- (100,183.4) -- cycle ;
\draw   (165,139) .. controls (165,135.13) and (168.13,132) .. (172,132) .. controls (175.87,132) and (179,135.13) .. (179,139) .. controls (179,142.87) and (175.87,146) .. (172,146) .. controls (168.13,146) and (165,142.87) .. (165,139) -- cycle ;
\draw   (120,146) .. controls (120,142.13) and (123.13,139) .. (127,139) .. controls (130.87,139) and (134,142.13) .. (134,146) .. controls (134,149.87) and (130.87,153) .. (127,153) .. controls (123.13,153) and (120,149.87) .. (120,146) -- cycle ;
\draw   (125,37) .. controls (125,33.13) and (128.13,30) .. (132,30) .. controls (135.87,30) and (139,33.13) .. (139,37) .. controls (139,40.87) and (135.87,44) .. (132,44) .. controls (128.13,44) and (125,40.87) .. (125,37) -- cycle ;
\draw    (172,132) .. controls (170.03,91.02) and (159.33,65.18) .. (133.2,44.92) ;
\draw [shift={(132,44)}, rotate = 37.07] [color={rgb, 255:red, 0; green, 0; blue, 0 }  ][line width=0.75]    (10.93,-4.9) .. controls (6.95,-2.3) and (3.31,-0.67) .. (0,0) .. controls (3.31,0.67) and (6.95,2.3) .. (10.93,4.9)   ;
\draw    (132,44) .. controls (110.33,53.26) and (104.18,107.73) .. (125.98,137.65) ;
\draw [shift={(127,139)}, rotate = 232.15] [color={rgb, 255:red, 0; green, 0; blue, 0 }  ][line width=0.75]    (10.93,-4.9) .. controls (6.95,-2.3) and (3.31,-0.67) .. (0,0) .. controls (3.31,0.67) and (6.95,2.3) .. (10.93,4.9)   ;
\draw (167,148) node [anchor=north west][inner sep=0.75pt]    {$x$};
\draw (149,26.4) node [anchor=north west][inner sep=0.75pt]    {$y$};
\draw (122,155) node [anchor=north west][inner sep=0.75pt]    {$z$};
\draw (146,191.8) node [anchor=north west][inner sep=0.75pt]    {$U$};
\end{tikzpicture}
\end{center}

Let $\pi\, U$ be the set of passive points of $U$.
The following was originally proved for descriptive \textbf{Grz}-frames, but the same characterization applies to descriptive \textbf{MGrz}-frames as well.

\begin{lem} {\label{fine-esakia}} 
\cite[p.~70]{esakia2019} Let $\mathfrak F = (X,R,E)$ be a descriptive \textbf{MS4}-frame. The following are equivalent.
\begin{enumerate}
    \item $\mathfrak F$ is a descriptive \textbf{MGrz}-frame.
    \item ${\textbf{max}}_R U = {\textbf{qmax}}_R U$ for each clopen $U\subseteq X$.
    \item $U\subseteq R^{-1}[\pi\,U]$ for each clopen $U\subseteq X$.
\end{enumerate}
\end{lem}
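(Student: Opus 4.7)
The result is a direct adaptation of the classical Fine--Esakia characterization of descriptive \textbf{Grz}-frames \cite[p.~70]{esakia2019}, the key point being that the Grz axiom involves only the $\Diamond$-modality, so the equivalence relation $E$ plays no role. The plan is to prove $(1)\Leftrightarrow(3)$ via J\'onsson--Tarski duality, $(3)\Rightarrow(2)$ by a short Stone-separation argument, and $(2)\Rightarrow(3)$ by a Zornian maximality argument whose final step is the main technical obstacle.

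For $(1)\Leftrightarrow(3)$: by \Cref{thm: JT for monadic}, $\mathfrak F$ is a descriptive \textbf{MGrz}-frame iff the Grz axiom $a\le\Diamond(a\wedge\neg\Diamond(\Diamond a\wedge\neg a))$ holds in $\text{Clop}(\mathfrak F)$. Substituting $\Diamond_R V=R^{-1}[V]$, this becomes $U\subseteq R^{-1}[U\setminus R^{-1}[R^{-1}[U]\setminus U]]$ for every clopen $U$, and unwinding \Cref{passive} identifies the inner clopen $U\setminus R^{-1}[R^{-1}[U]\setminus U]$ with $\pi\,U$, giving (3).

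For $(3)\Rightarrow(2)$: if $y\in\textbf{qmax}_R U$ were not in $\textbf{max}_R U$, there would be $z\in U$ with $y\mathrel{R}z$, $z\neq y$, and $z\mathrel{R}y$ by quasi-maximality. Separate $y$ and $z$ by a clopen $W$ with $y\in W$ and $z\notin W$, and set $V=U\cap W$. Apply (3) to $V$ to produce $w\in\pi\,V$ with $y\mathrel{R}w$; since $w\in V\subseteq U$, quasi-maximality of $y$ in $U$ yields $w\mathrel{R}y$, hence $w\mathrel{R}z$ by transitivity. But then $w\mathrel{R}z\mathrel{R}y$ with $z\notin V$ and $y\in V$ exhibits $w$ as active in $V$, contradicting $w\in\pi\,V$.

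For $(2)\Rightarrow(3)$: given a clopen $U$ and $x\in U$, the standard Zornian argument for descriptive \textbf{S4}-frames, applied to the non-empty closed set $R[x]\cap U$, yields $y\in R[x]\cap U$ quasi-maximal in this set; by transitivity, $y$ is quasi-maximal in $U$, and (2) then gives $y\in\textbf{max}_R U$. The main obstacle is upgrading maximality to passivity, i.e.\ showing $E_R[y]\subseteq U$: should some $y'\in E_R[y]\setminus U$ exist, one separates $y$ from $y'$ by a clopen, re-applies (2) on refined clopen subsets, and uses closedness of $E_R[y]$ together with compactness of $X$ to manufacture a clopen witnessing a failure of (2), a contradiction. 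Once $E_R[y]\subseteq U$ is established, maximality forces $E_R[y]=\{y\}$, so $y\in\pi\,U$, and (3) follows.
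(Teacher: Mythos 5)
The paper does not actually prove this lemma: it is imported from Esakia's book, with the remark that all three conditions mention only $R$, so the classical characterization of descriptive \textbf{Grz}-frames applies verbatim to the $(X,R)$-reduct. Your decomposition is therefore a genuine proof attempt rather than a reconstruction, and two of its three pieces are correct: dualizing $a\le\Diamond(a\wedge\neg\Diamond(\Diamond a\wedge\neg a))$ does give $U\subseteq R^{-1}\bigl[U\setminus R^{-1}[R^{-1}[U]\setminus U]\bigr]=R^{-1}[\pi\,U]$, and your separation argument for $(3)\Rightarrow(2)$ goes through.

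The gap is in $(2)\Rightarrow(3)$, exactly at the step you label the main technical obstacle. Having found $y\in R[x]\cap\textbf{max}_R U$, you must show $y$ is passive, i.e.\ $E_R[y]\subseteq U$, and the sentence about separating $y$ from $y'$, re-applying (2) to ``refined clopen subsets,'' and ``manufacturing a clopen witnessing a failure of (2)'' is a plan, not a proof. It is also not clear it can be executed as described: to make $y$ quasi-maximal but non-maximal in some clopen containing $y'$, you would need a clopen $W\ni y'$ with $R[y]\cap W\subseteq R^{-1}[y]$, i.e.\ $W$ disjoint from $R[y]\setminus R^{-1}[y]$; that set is a difference of closed sets and need not be closed, so $y'$ may lie in its closure and no such $W$ need exist. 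The standard repair is to run the maximal-point argument on the clopen $T=R^{-1}[U]$ rather than on $U$: take $y\in R[x]\cap\textbf{qmax}_R T$ (Zorn plus compactness), upgrade to $y\in\textbf{max}_R T$ by (2); from $y\in T$ there is $v\in U\subseteq T$ with $y\mathrel{R}v$, so $v=y$ and $y\in U$; and if $y\mathrel{R}u\mathrel{R}v'$ with $v'\in U$ and $u\notin U$, then $u\in T$ and maximality of $y$ in $T$ forces $u=y\in U$, a contradiction, so $y$ is passive in $U$. Hence $x\in R^{-1}[\pi\,U]$. With that substitution your argument closes.
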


\begin{rmk}{\label{enter-exit}}
The above lemma yields 
that if $U$ is a clopen set in a descriptive \textbf{MGrz}-frame, then each point $y\in U$ can reach a passive point $x\in U$ through $R$. 
Since a maximal point $y$ of $U$ can only reach itself in $U$ through $R$, we obtain that each maximal point in $U$ is passive; that is, ${\textbf{max}}_R U \subseteq \pi\, U$. This, in particular, implies that maximal points in a descriptive \textbf{MGrz}-frame cannot be in an $R$-cluster with any other point. Furthermore, we not only have that $U\subseteq R^{-1}[\pi\,U]$, but also that $U\subseteq R^{-1}[\textbf{max}_R U]$ (see \Cref{fe_principle}). 
We use these observations in several proofs below. 
\end{rmk}

%
%




Note that maximal (and quasi-maximal) points in descriptive frames are defined purely in terms of the relation $R$. We next refine this notion by also involving the equivalence relation $E$.

%
%


For a \textbf{DMK}-frame $\mathfrak{F}=(X,R,E)$, recall that $Q = E\circ R$ (see \crefdefpart{Q_rel}{Qrel}). Since $R$ and $E$ are both continuous relations, so is $Q$. Moreover, $R\subseteq Q$ and if $R$ is a preorder, then so is $Q$.
    
\begin{defn}{\label{strongly_maximal}}
    Let $\mathfrak{F}=(X,R,E)$ be a descriptive \textbf{MGrz}-frame and $U\subseteq X$. A point $x\in U$ is said to be {\em strongly maximal} in $U$ if $x\in{\textbf{max}}_R U$ and the following property holds:
    \begin{equation*}
        x\mathrel{Q}y \text{ and } y\in U \text { imply } x\mathrel{E}y. 
    \end{equation*}
    We let ${\textbf{smax}}_R U$ denote the set of strongly maximal points of $U$. 
\end{defn}

It is obvious that $x\in {\textbf{smax}}_R U$ iff $x\in{\textbf{max}}_R U$ and  
\begin{equation*}
    \text{ for each } y \in X, \, x \mathrel{Q} y \text{ and } x \mathrel{\cancel{E}}y \text{ imply } y\notin U.
\end{equation*} 

\begin{rmk}
        Clearly ${\textbf{smax}}_R U\subseteq {\textbf{max}}_R U$, and this inclusion can be proper. For example, consider the following \textbf{MGrz}-frame:
\begin{center}
\tikzset{every picture/.style={line width=0.7pt}} 
\begin{tikzpicture}[x=0.75pt,y=0.75pt,yscale=-1,xscale=1,scale=0.75]
\draw  [color={rgb, 255:red, 74; green, 144; blue, 226 }  ,draw opacity=1 ] (200,61.2) .. controls (200,55.68) and (204.48,51.2) .. (210,51.2) -- (340,51.2) .. controls (345.52,51.2) and (350,55.68) .. (350,61.2) -- (350,91.2) .. controls (350,96.72) and (345.52,101.2) .. (340,101.2) -- (210,101.2) .. controls (204.48,101.2) and (200,96.72) .. (200,91.2) -- cycle ;
\draw  [color={rgb, 255:red, 74; green, 144; blue, 226 }  ,draw opacity=1 ] (200,161.2) .. controls (200,155.68) and (204.48,151.2) .. (210,151.2) -- (340,151.2) .. controls (345.52,151.2) and (350,155.68) .. (350,161.2) -- (350,191.2) .. controls (350,196.72) and (345.52,201.2) .. (340,201.2) -- (210,201.2) .. controls (204.48,201.2) and (200,196.72) .. (200,191.2) -- cycle ;
\draw   (217.5,75.2) .. controls (217.5,70.51) and (221.31,66.7) .. (226,66.7) .. controls (230.69,66.7) and (234.5,70.51) .. (234.5,75.2) .. controls (234.5,79.89) and (230.69,83.7) .. (226,83.7) .. controls (221.31,83.7) and (217.5,79.89) .. (217.5,75.2) -- cycle ;
\draw   (217.5,176.2) .. controls (217.5,171.51) and (221.31,167.7) .. (226,167.7) .. controls (230.69,167.7) and (234.5,171.51) .. (234.5,176.2) .. controls (234.5,180.89) and (230.69,184.7) .. (226,184.7) .. controls (221.31,184.7) and (217.5,180.89) .. (217.5,176.2) -- cycle ;
\draw   (317.5,75.2) .. controls (317.5,70.51) and (321.31,66.7) .. (326,66.7) .. controls (330.69,66.7) and (334.5,70.51) .. (334.5,75.2) .. controls (334.5,79.89) and (330.69,83.7) .. (326,83.7) .. controls (321.31,83.7) and (317.5,79.89) .. (317.5,75.2) -- cycle ;
\draw   (317.5,175.2) .. controls (317.5,170.51) and (321.31,166.7) .. (326,166.7) .. controls (330.69,166.7) and (334.5,170.51) .. (334.5,175.2) .. controls (334.5,179.89) and (330.69,183.7) .. (326,183.7) .. controls (321.31,183.7) and (317.5,179.89) .. (317.5,175.2) -- cycle ;
\draw  [color={rgb, 255:red, 255; green, 0; blue, 0}  ,draw opacity=1 ] (308,70.2) .. controls (315,53.2) and (342,55.2) .. (344,71.2) .. controls (346,87.2) and (343,138.2) .. (346,170.2) .. controls (349,202.2) and (312.09,187.58) .. (277,189.2) .. controls (241.91,190.82) and (219.85,195.59) .. (211,183.2) .. controls (202.15,170.81) and (213,162.2) .. (232,155.2) .. controls (251,148.2) and (275,142.2) .. (294,130.2) .. controls (313,118.2) and (301,87.2) .. (308,70.2) -- cycle ;
\draw    (226,167.7) -- (226,85.7) ;
\draw [shift={(226,83.7)}, rotate = 90] [color={rgb, 255:red, 0; green, 0; blue, 0 }  ][line width=0.75]    (10.93,-4.9) .. controls (6.95,-2.3) and (3.31,-0.67) .. (0,0) .. controls (3.31,0.67) and (6.95,2.3) .. (10.93,4.9)   ;
\draw    (326,166.7) -- (326,85.7) ;
\draw [shift={(326,83.7)}, rotate = 90] [color={rgb, 255:red, 0; green, 0; blue, 0 }  ][line width=0.75]    (10.93,-4.9) .. controls (6.95,-2.3) and (3.31,-0.67) .. (0,0) .. controls (3.31,0.67) and (6.95,2.3) .. (10.93,4.9)   ;
\draw (240,170.4) node [anchor=north west][inner sep=0.75pt]    {$a$};
\draw (306,170.4) node [anchor=north west][inner sep=0.75pt]    {$b$};
\draw (240,80.4) node [anchor=north west][inner sep=0.75pt]    {$c$};
\draw (306,79) node [anchor=north west][inner sep=0.75pt]    {$d$};
\end{tikzpicture}
\end{center}
The black arrows and circles represent the relation $R$, the $E$-clusters are indicated in blue, and $U=\{a,b,d\}$ is represented by 
the red curve.
With the discrete topology, this is a descriptive \textbf{MGrz}-frame. Then observe that $a\in\textbf{max}_R U$, but $a\notin\textbf{smax}_R U$ since $a\mathrel{Q}d$, $a\mathrel{\cancel{E}}d$, and yet $d\in U$.

\end{rmk}

On the other hand, we show that if $U$ is $E$-saturated (that is, $E[U]=U$), 
then $\textbf{smax}_R U=\textbf{max}_R U$. Therefore, while the notions of maximal and strongly maximal points do not always coincide, they agree on $E$-saturated sets.

\begin{prop}{\label{sat-smax}}
    Let $\mathfrak{F}=(X,R,E)$ be a descriptive \textbf{MGrz}-frame and $U \subseteq X$. 
    \begin{enumerate}
    \item If $x\in U\cap\textbf{max}_R E[U]$, then $x\in\textbf{smax}_R E[U]\cap\textbf{smax}_R U$. \label{saturated}
    \item If $U$ is $E$-saturated, then $\textbf{smax}_R U=\textbf{max}_R U$. \label{smax=max}
    \end{enumerate}
\end{prop}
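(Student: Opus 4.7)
The plan is to prove part (1) directly from the definitions, and then derive part (2) as an immediate corollary, since $E$-saturation of $U$ means $E[U] = U$ and so collapses $\textbf{max}_R E[U]$ to $\textbf{max}_R U$.

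For part (1), I would first establish $x \in \textbf{smax}_R E[U]$. Maximality in $E[U]$ is given by hypothesis, so only the strong-maximality condition needs to be checked. Suppose $x \mathrel{Q} y$ with $y \in E[U]$. Unpacking $Q = E \circ R$, there exists $z \in X$ with $x \mathrel{R} z$ and $z \mathrel{E} y$. Since $y \in E[U]$, there is $u \in U$ with $y \mathrel{E} u$, and transitivity of $E$ gives $z \mathrel{E} u$, so $z \in E[U]$. The maximality of $x$ in $E[U]$ combined with $x \mathrel{R} z$ then forces $z = x$, which yields $x \mathrel{E} y$ as required.

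Once $x \in \textbf{smax}_R E[U]$ is established, the membership $x \in \textbf{smax}_R U$ follows by restriction. Indeed, from $U \subseteq E[U]$, any $y \in U$ with $x \mathrel{R} y$ also lies in $E[U]$, so $y = x$; this gives $x \in \textbf{max}_R U$. The strong-maximality clause in $U$ is even easier: if $x \mathrel{Q} y$ and $y \in U$, then $y \in E[U]$, and strong maximality in $E[U]$ delivers $x \mathrel{E} y$. Thus $x \in \textbf{smax}_R U$.

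For part (2), if $U$ is $E$-saturated, then $E[U] = U$, so $U \cap \textbf{max}_R E[U] = \textbf{max}_R U$. Part (1) then immediately gives $\textbf{max}_R U \subseteq \textbf{smax}_R U$, and since the reverse inclusion is automatic from \Cref{strongly_maximal}, equality follows. I do not anticipate any serious obstacle here: the argument is essentially a bookkeeping exercise combining the transitivity of $E$ with the definitions of $Q$, $\textbf{max}_R$, and $\textbf{smax}_R$, and the \textbf{MGrz} hypothesis is used only insofar as it guarantees we are in the setting where the notion of maximal point behaves well (via \Cref{fine-esakia}), but is not invoked explicitly in the proof itself.
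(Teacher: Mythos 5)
Your proposal is correct and follows essentially the same route as the paper: both arguments reduce the strong-maximality clause for $E[U]$ to the $R$-maximality of $x$ in $E[U]$ together with transitivity of $E$ (you argue the positive form directly where the paper argues the contrapositive), and part (2) is handled identically. Your derivation of $x\in\textbf{smax}_R U$ as a formal consequence of $x\in\textbf{smax}_R E[U]$ and $U\subseteq E[U]$ is a slightly tidier way of discharging what the paper dismisses with ``can be proved similarly,'' and your closing observation that the \textbf{MGrz} hypothesis is not actually invoked is accurate.
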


\begin{proof}
(\labelcref{saturated}) We show that $x\in{\textbf{smax}}_R E[U]$. That $x\in{\textbf{smax}}_R U$ can be proved similarly. Suppose $x\mathrel{Q}t$ and $x\mathrel{\cancel{E}}t$ for some $t\in X$. By definition, $x\mathrel{R}y$ and $y\mathrel{E}t$ for some $y\in X$. From $x\mathrel{\cancel{E}}t$ and $y\mathrel{E}t$ it follows that $x\neq y$. Since $x$ is $R$-maximal in $E[U]$, we get $y\notin E[U]$. 
Since {$y\mathrel{E}t$,} we must have $t\notin E[U]$. Thus, $x\in{\textbf{smax}}_R E[U]$. 


%
(\labelcref{smax=max}) By definition, $\textbf{smax}_R U \subseteq \textbf{max}_R U$. For the other inclusion, let $x\in \textbf{max}_R U$. Since $U$ is $E$-saturated, $E[U]=U$, and thus $x\in U \cap \textbf{max}_R E[U]$. Therefore, $x\in\textbf{smax}_R U$ by (\labelcref{saturated}).
\end{proof}

%
%

We recall the well-known Fine-Esakia Principle, applied to descriptive \textbf{MGrz}-frames.

\begin{thm}(Fine-Esakia Principle, {\cite[Cor.~3.5.7]{esakia2019}).}{\label{fe_principle}}
    Let $\mathfrak{F}=(X,R,E)$ be a descriptive \textbf{MGrz}-frame. For each clopen set $U$ and $x\in U$, we have $R[x]\cap\textbf{max}_R U \neq \varnothing$.
\end{thm}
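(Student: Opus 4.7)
The plan is to combine a Zorn's lemma argument on the preorder $(R[x]\cap U, R)$ with the characterization of descriptive \textbf{MGrz}-frames supplied by \Cref{fine-esakia}. Since $R$ is a preorder (as $\textbf{MGrz}$ extends $\textbf{S4}$), we have $x\in R[x]\cap U$, so this set is nonempty. I would order it by $R$, extract a Zorn-maximal element (which is quasi-maximal in $U$), and then promote it to an $R$-maximal point of $U$ via the Grz condition.

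To verify the hypothesis of Zorn's lemma, let $C\subseteq R[x]\cap U$ be a chain under $R$. Each $R[y]$ is closed in $X$ by continuity of $R$, and $U$ is clopen, so the family $\{R[y]\cap U : y\in C\}$ consists of closed subsets of $X$. Given finitely many $y_1,\dots,y_n\in C$, total ordering produces an index $i$ such that $y_j\mathrel{R}y_i$ for all $j$, and reflexivity gives $y_i\in R[y_j]\cap U$ for each $j$; hence the family has the finite intersection property. By compactness of the Stone space $X$, pick $z\in\bigcap_{y\in C}(R[y]\cap U)$. Then $y\mathrel{R}z$ for each $y\in C$ and $z\in U$, and transitivity applied to $x\mathrel{R}y$ and $y\mathrel{R}z$ (for any $y\in C$, or using reflexivity at $x$ if $C=\varnothing$) yields $x\mathrel{R}z$. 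Thus $z\in R[x]\cap U$ is an upper bound for $C$.

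Zorn's lemma now produces an element $m\in R[x]\cap U$ such that, for every $y\in R[x]\cap U$, $m\mathrel{R}y$ implies $y\mathrel{R}m$. I claim $m\in\textbf{qmax}_R U$. Indeed, if $y\in U$ satisfies $m\mathrel{R}y$, then from $x\mathrel{R}m$ and transitivity we get $y\in R[x]\cap U$, so by the choice of $m$ we obtain $y\mathrel{R}m$. Invoking \Cref{fine-esakia}, the descriptive \textbf{MGrz}-condition gives $\textbf{qmax}_R U=\textbf{max}_R U$ on clopen sets, so $m\in\textbf{max}_R U$. Since $m\in R[x]$, this establishes $R[x]\cap\textbf{max}_R U\neq\varnothing$.

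The only nontrivial step is the verification of the Zorn hypothesis, where both continuity of $R$ (to make $R[y]$ closed) and compactness of $X$ (to bound chains) are essential; the remainder reduces to a transitivity argument and a direct appeal to \Cref{fine-esakia}.
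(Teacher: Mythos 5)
Your proof is correct and is essentially the standard argument behind the cited result (the paper itself gives no proof, only the reference to Esakia's book): a Zorn--compactness argument using the finite intersection property of the closed sets $R[y]\cap U$ to produce a quasi-maximal point of $U$ in $R[x]$, followed by the identification $\textbf{qmax}_R U=\textbf{max}_R U$ from \Cref{fine-esakia}. The only minor blemish is the treatment of the empty chain, where one should simply take $x$ itself as the upper bound rather than appeal to the (then vacuous) intersection; this does not affect the validity of the argument.
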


The following result is a modification of the above 
and will play a crucial role in the selective filtration construction of the next section. 

\begin{thm}{\label{smax}}
    Let $\mathfrak{F}=(X,R,E)$ be a descriptive {\textbf{MGrz}}-frame. For each clopen set $U$ and $x\in U$, we have $Q[x]\cap {\textbf{smax}}_R U \neq \varnothing$. 
\end{thm}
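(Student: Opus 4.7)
The plan is to combine the Fine-Esakia Principle applied to $E[U]$ with a secondary maximality argument carried out inside a single $E$-class. Since $U$ is clopen and $E$ is continuous, $E[U]$ is clopen, and $x \in U \subseteq E[U]$. Applying \Cref{fe_principle} to $E[U]$ and $x$ yields a point $y \in R[x] \cap \textbf{max}_R E[U]$. Because $y \in E[U]$, there is some $u \in U$ with $y \mathrel{E} u$, so $U \cap E[y]$ is a nonempty closed subset of $X$ (being the intersection of the clopen $U$ with the closed set $E[y]$). I would then select an $R$-maximal point $u^{*}$ of $U \cap E[y]$; in particular $u^{*} \in U$ and $y \mathrel{E} u^{*}$.

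Next, I would verify that $u^{*} \in \textbf{smax}_R U$. For $R$-maximality in $U$: suppose $u^{*} \mathrel{R} v$ with $v \in U$. Applying the commutativity of the \textbf{MK}-frame to $y \mathrel{E} u^{*}$ and $u^{*} \mathrel{R} v$ gives $w \in X$ with $y \mathrel{R} w$ and $w \mathrel{E} v$; since $v \in U \subseteq E[U]$ we have $w \in E[U]$, and the $R$-maximality of $y$ in $E[U]$ forces $w = y$, so $y \mathrel{E} v$ and thus $v \in U \cap E[y]$; maximality of $u^{*}$ in $U \cap E[y]$ then yields $v = u^{*}$. For the strong maximality condition: if $u^{*} \mathrel{Q} z$ with $z \in U$, write $u^{*} \mathrel{R} w$ and $w \mathrel{E} z$. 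Running the same commutativity argument on $y \mathrel{E} u^{*}$ and $u^{*} \mathrel{R} w$ produces $w'$ with $y \mathrel{R} w'$ and $w' \mathrel{E} w$; as $z \in U$ gives $w \in E[U]$ and so $w' \in E[U]$, the maximality of $y$ forces $w' = y$, yielding $y \mathrel{E} w$ and hence $y \mathrel{E} z$; since $u^{*} \mathrel{E} y$, transitivity of $E$ gives $u^{*} \mathrel{E} z$. Finally, $x \mathrel{Q} u^{*}$ follows at once from $x \mathrel{R} y$ and $y \mathrel{E} u^{*}$.

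The main obstacle is the selection of $u^{*}$: \Cref{fe_principle} as stated supplies maximal points only in \emph{clopen} sets, whereas $U \cap E[y]$ is merely closed. To bridge this gap, I would establish the standard extension of the Fine-Esakia principle to closed subsets of a descriptive \textbf{MGrz}-frame. One approach is to write $E[y]$ as the intersection of its clopen neighborhoods $\{V_i\}_{i \in I}$ in the zero-dimensional space $X$, apply \Cref{fe_principle} to each clopen $U \cap V_i$ to obtain a nonempty closed set of maximal points, and extract a cluster point using the compactness of $X$ together with the closedness of the maximal-point sets (\crefpart{fine-esakia}{1}{2} applied to clopens). Once this extension is in place, the rest of the argument is purely algebraic manipulation of the commutativity square, and no further use of the descriptive structure is needed.
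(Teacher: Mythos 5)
There is a genuine gap at the step you yourself flag as the main obstacle: the production of an $R$-maximal point $u^{*}$ of the closed set $U\cap E[y]$. The extension of the Fine-Esakia principle from clopen to closed sets that you invoke is in fact false for descriptive \textbf{MGrz}-frames: in the paper's own \Cref{example}, the closed set $\{x_\infty,y_\infty\}$ is a nontrivial $R$-cluster and therefore has no $R$-maximal points at all, so $R[x_\infty]\cap\textbf{max}_R\{x_\infty,y_\infty\}=\varnothing$ and the ``$\textbf{max}$'' form of \Cref{fe_principle} does not survive the passage to closed sets (only the quasi-maximal version does, via Zorn's lemma and compactness). Your proposed compactness argument does not repair this: writing $E[y]=\bigcap_i V_i$ with $V_i$ clopen, the closed sets $R[u]\cap\textbf{max}_R(U\cap V_i)$ need not have the finite intersection property, because a point that is $R$-maximal in the smaller set $U\cap V_i$ need not be $R$-maximal in the larger set $U\cap V_j$ when $V_i\subseteq V_j$; so compactness yields nothing. (The verification half of your argument is essentially sound and could be salvaged: Zorn plus compactness gives a \emph{quasi}-maximal point $u^{*}$ of $U\cap E[y]$, your commutativity computation then shows that every $v\in U$ with $u^{*}\mathrel{R}v$ satisfies $v\mathrel{R}u^{*}$, i.e.\ $u^{*}\in\textbf{qmax}_R U$, and since $U$ is clopen, \Cref{fine-esakia} upgrades this to $u^{*}\in\textbf{max}_R U$. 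But that is not the argument you wrote.)

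The paper sidesteps the issue by never maximizing inside a non-clopen set. After obtaining $t\in R[x]\cap\textbf{max}_R E[U]$ and $u\in U$ with $t\mathrel{E}u$ (your $y$ and $u$), it applies \Cref{fe_principle} a second time to the \emph{clopen} set $U$ at the point $u$, producing $z\in R[u]\cap\textbf{max}_R U$. This $z$ need not lie in $E[t]$, but that does not matter: since $E[U]$ is $E$-saturated, \crefdefpart{sat-smax}{smax=max} gives $t\in\textbf{smax}_R E[U]$, which first forces $t\mathrel{E}z$ and then converts any violation of strong maximality of $z$ in $U$ into a violation of strong maximality of $t$ in $E[U]$. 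You should either adopt this route or state and prove the quasi-maximal-point lemma for closed sets and the upgrade via \Cref{fine-esakia} explicitly.
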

\begin{proof}
Let $x\in U$. Since $E$ is a continuous relation, $E[U]$ is  clopen. Hence, by the Fine-Esakia principle, 
there is $t \in R[x]\cap \textbf{max}_R E[U]$. From $t\in E[U]$ it follows that $t\mathrel{E}u$
for some $u\in U$. Applying the Fine-Esakia principle again, 
there is $z \in R[u]\cap \textbf{max}_R U$. We show that $z\in Q[x]\cap\textbf{smax}_R U$. By commutativity, $x\mathrel{Q}z$.
Since $z\in \textbf{max}_R U$, to see that $z\in\textbf{smax}_R U$, suppose $z\mathrel{Q}y$ and $z\mathrel{\cancel{E}}y$.   
It suffices to show that $y\notin U$. Since $t$ is $R$-maximal in $E[U]$, we must have $t\in\textbf{smax}_R E[U]$ by \crefdefpart{sat-smax}{smax=max}. Therefore, $t\mathrel{Q}z$ and $z\in E[U]$ imply that $t\mathrel{E}z$. Thus, $t\mathrel{Q}y$ and $t\mathrel{\cancel{E}}y$. Due to the strong maximality of $t$, we must have $y\notin E[U]$. Since $U\subseteq E[U]$, we conclude that $y\notin U$. 
\end{proof}

%

For a descriptive \textbf{MGrz}-frame $\mathfrak{F}=(X,R,E)$, the relation $Q=E\circ R$ gives rise to the equivalence relation $E_Q$ whose equivalence classes are the
$Q$-clusters $E_Q[x]:=Q[x]\cap Q^{-1}[x]$ for each $x\in X$. We clearly have that $E\subseteq E_Q$. However, the converse is not true in general. 

\begin{lem}{\label{E_Q}}
Let $\mathfrak{F}=(X,R,E)$ be a descriptive \textbf{MGrz}-frame
and $x\in X$. 
    \begin{enumerate}
    \item If $\textbf{max}_R E_Q[x]\neq\varnothing$, then $E_Q[x]=E[x]$. {\label{E_Q0}}
    \item If $x\in \textbf{smax}_R U$ for some clopen $U \subseteq X$, then $E_Q[x]=E[x]$. {\label{E_Q1}}
    \item If $(X,R,E)$ is a finite \textbf{MGrz}-frame, then $E_Q=E$. {\label{E_Q2}}
\end{enumerate}
\end{lem}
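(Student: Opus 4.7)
My plan is to prove the three parts in sequence. A key preliminary fact used throughout is the transitivity of $Q = E \circ R$, which follows from the commutativity condition on an \textbf{MK}-frame together with transitivity of $R$ (available because $R$ is a preorder on an \textbf{MS4}-frame) and of $E$; concretely, given $a \mathrel{Q} b \mathrel{Q} c$, unpacking as $a \mathrel{R} a' \mathrel{E} b \mathrel{R} b'\mathrel{E} c$ and applying commutativity to $a' \mathrel{E} b \mathrel{R} b'$ produces $a''$ with $a \mathrel{R} a' \mathrel{R} a'' \mathrel{E} b' \mathrel{E} c$. Part (3) will be an immediate corollary of (2), and (2) will reduce to (1) by exhibiting a maximal element of $E_Q[x]$.

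For part (1), I would fix $y \in \textbf{max}_R E_Q[x]$ and show every $z \in E_Q[x]$ lies in $E[x]$. Applying transitivity of $Q$ to $y \mathrel{Q} x \mathrel{Q} z$ yields $y \mathrel{Q} z$, so there is some $u$ with $y \mathrel{R} u$ and $u \mathrel{E} z$. The key step is to verify $u \in E_Q[x]$: the direction $x \mathrel{Q} u$ follows from $x \mathrel{Q} y$, $y \mathrel{R} u$, and transitivity of $Q$; the direction $u \mathrel{Q} x$ follows from $u \mathrel{E} z$ (which gives $u \mathrel{Q} z$ via reflexivity of $R$) combined with $z \mathrel{Q} x$ and transitivity. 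Once $u \in E_Q[x]$, the $R$-maximality of $y$ there forces $u = y$, so $y \mathrel{E} z$. Specializing the same argument to $z = x$ also gives $y \mathrel{E} x$, whence $x \mathrel{E} z$ by transitivity of $E$.

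For part (2), I would produce a point $m \in \textbf{max}_R E_Q[x]$ and invoke (1). Since $E$ is continuous and $U$ is clopen, $E[U]$ is clopen; Fine-Esakia (\Cref{fe_principle}) applied to $E[U]$ and $x \in U \subseteq E[U]$ yields some $m \in R[x] \cap \textbf{max}_R E[U]$. Picking $u \in U$ with $m \mathrel{E} u$, I have $x \mathrel{Q} u$; strong maximality of $x$ in $U$ then gives $x \mathrel{E} u$, hence $m \mathrel{E} x$. This places $m$ in $E_Q[x]$ (using $x \mathrel{R} m$ for one direction and $m \mathrel{R} m \mathrel{E} x$ for the other). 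To see $m$ is $R$-maximal in $E_Q[x]$: if $m \mathrel{R} z$ with $z \in E_Q[x]$, then unpacking $z \mathrel{Q} x$ yields $z'$ with $z \mathrel{R} z' \mathrel{E} x$, so $m \mathrel{R} z'$ with $z' \in E[U]$; maximality of $m$ in $E[U]$ forces $z' = m$, giving $z \mathrel{R} m$ and $m \mathrel{R} z$. Since maximal points in a descriptive \textbf{MGrz}-frame cannot belong to nontrivial $R$-clusters (\Cref{enter-exit}), $z = m$, as desired. With $m \in \textbf{max}_R E_Q[x]$ in hand, (1) closes the argument.

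Part (3) is immediate: a finite \textbf{MGrz}-frame carries the discrete topology, so every singleton $\{x\}$ is clopen, and $x \in \textbf{smax}_R \{x\}$ trivially; (2) then yields $E_Q[x] = E[x]$ for every $x$, i.e., $E_Q = E$. The main obstacle I anticipate is the $R$-maximality argument in (2): showing $m \in \textbf{max}_R E_Q[x]$ (rather than merely $m \in \textbf{max}_R E[U]$) requires carefully interleaving strong maximality of $x$ in $U$, maximality of $m$ in the larger clopen $E[U]$, and the descriptive-frame property from \Cref{enter-exit}, since none of these ingredients alone is sufficient.
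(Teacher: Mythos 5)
Your proposal is correct, and it reaches the same conclusions by a noticeably more self-contained route. For part (1) the paper simply cites \cite[Lem.~3(a)]{BV} (noting the proof there carries over from partial orders to preorders), whereas you supply the argument in full: the transitivity of $Q$, the identification of the auxiliary point $u$ as a member of $E_Q[x]$, and the collapse $u=y$ by maximality. For part (2) the paper first applies the strengthened principle (\Cref{smax}) to obtain $y\in{\textbf{smax}}_R\,E[U]$ with $x\mathrel{Q}y$, proves $x\mathrel{E}y$, and then shows $y\in{\textbf{max}}_R\,E_Q[x]$ via the enter--exit property; you instead apply only the classical Fine--Esakia principle (\Cref{fe_principle}) to the clopen $E[U]$, getting a witness $m$ with the stronger property $x\mathrel{R}m$, and you never need strong maximality of the witness itself --- only strong maximality of $x$ in $U$ (to force $m\mathrel{E}x$) together with maximality of $m$ in $E[U]$ and the no-nontrivial-$R$-cluster observation of \Cref{enter-exit}. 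Since \Cref{smax} is itself derived from Fine--Esakia, your version is slightly more economical in its dependencies, at the cost of redoing some bookkeeping that \Cref{smax} packages up. For part (3) the paper notes that $R$ is a partial order on a finite \textbf{MGrz}-frame, so ${\textbf{max}}_R\,E_Q[x]\neq\varnothing$ and (1) applies directly; your reduction via the clopen singleton $\{x\}$ and part (2) is equally valid. All steps check out, including the implicit use of $E\subseteq E_Q$ for the easy inclusion in (1).
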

\begin{proof}
    (\labelcref{E_Q0}) This follows from \cite[Lem.~3(a)]{BV}. Note that the indicated result assumes that $R$ is a partial order. However, the same proof works in our more general setting.
    
    (\labelcref{E_Q1}) Let $x\in\textbf{smax}_R U$. Then $x\in E[U]$, so \Cref{smax} yields $y\in\textbf{smax}_R E[U]$ such that $x\mathrel{Q}y$. 
    We first show that $x\mathrel{E}y$. Suppose not.
    Since $y\in E[U]$, there is $t\in U$ with $y\mathrel{E}t$. From $x\mathrel{Q}y$ and $y\mathrel{E}t$ it follows that $x\mathrel{Q}t$; and from $x\mathrel{\cancel{E}}y$ it follows that $x\mathrel{\cancel{E}}t$. Therefore, $x\mathrel{Q}t$, $x\mathrel{\cancel{E}}t$, and $t\in U$,  contradicting that $x\in\textbf{smax}_R U$. Thus, $x\mathrel{E}y$. We next show that $y \in \textbf{max}_R E_Q[x]$. Let $y\mathrel{R}z$ and $y\ne z$ for some $z\in E_Q[x]$. Then $z \notin E[U]$ since $y \in \textbf{max}_R E[U]$. On the other hand, since $z\mathrel{E_Q}x$, we have $z\mathrel{Q}y$, so
    $z\mathrel{R}u\mathrel{E}y$ for some $u\in X$. Consequently, 
    $y\mathrel{R}z\mathrel{R}u$ with $y,u\in E[U]$ and $z\notin E[U]$, contradicting that $\mathfrak F$ is a descriptive \textbf{MGrz}-frame (see \Cref{enter-exit}). 
    We conclude that $y \in \textbf{max}_R E_Q[x]$, and hence $E_Q[x]=E[x]$ by (\labelcref{E_Q0}).


    (\labelcref{E_Q2}) If $(X,R,E)$ is finite, then $R$ is a partial order (see, e.g., \cite[Cor.~3.5.10]{esakia2019}). Therefore, $\textbf{max}_R E_Q[x]\neq\varnothing$ for each $x\in X$. Thus, (\labelcref{E_Q0}) applies, and hence $E_Q[x] = E[x]$ for each $x\in X$. Consequently, $E_Q=E$.
\end{proof}


We next show that the assumption in \crefdefpart{E_Q}{E_Q1} that $x\in \textbf{smax}_R U$ for some clopen $U \subseteq X$ is essential.

\begin{exm}
Consider the frame $(X,R,E)$ depicted below, where the black arrows and circles indicate the relation $R$ (see, e.g., \cite[p.~252, Fig.~8.3(b)]{CZ}), and $E$-clusters are shown in blue. The points $x_n,y_n$ are isolated, $(x_n) \longrightarrow x_\infty$, and $(y_n) \longrightarrow y_\infty$. Thus, $X$ is {the} $2$-point compactification of a discrete space, hence is a Stone space. It is straightforward to check that $(X,R)$ is a descriptive \textbf{Grz}-frame (see \cite[Ex.~8.52]{CZ}). Since $E$ is the diagonal, it is then clear that $(X,R,E)$ is a descriptive \textbf{MGrz}-frame. 
We have $E[x_\infty]=\{x_\infty\}$, but $E_Q[x_\infty] = \{ x_\infty, y_\infty \}$. 
\begin{center}
\tikzset{every picture/.style={line width=0.8pt}}
\begin{tikzpicture}[x=0.75pt,y=0.75pt,yscale=-1,xscale=1, scale=0.525]
\draw   (273.9,24.1) .. controls (273.9,20.73) and (276.63,18) .. (280,18) .. controls (283.37,18) and (286.1,20.73) .. (286.1,24.1) .. controls (286.1,27.47) and (283.37,30.2) .. (280,30.2) .. controls (276.63,30.2) and (273.9,27.47) .. (273.9,24.1) -- cycle ;
\draw    (280,32.2) -- (280,60.2) ;
\draw [shift={(280,30.2)}, rotate = 90] [color={rgb, 255:red, 0; green, 0; blue, 0 }  ][line width=0.75]    (10.93,-4.9) .. controls (6.95,-2.3) and (3.31,-0.67) .. (0,0) .. controls (3.31,0.67) and (6.95,2.3) .. (10.93,4.9)   ;
\draw   (273.9,67.1) .. controls (273.9,63.73) and (276.63,61) .. (280,61) .. controls (283.37,61) and (286.1,63.73) .. (286.1,67.1) .. controls (286.1,70.47) and (283.37,73.2) .. (280,73.2) .. controls (276.63,73.2) and (273.9,70.47) .. (273.9,67.1) -- cycle ;
\draw    (280,75.2) -- (280,103.2) ;
\draw [shift={(280,73.2)}, rotate = 90] [color={rgb, 255:red, 0; green, 0; blue, 0 }  ][line width=0.75]    (10.93,-4.9) .. controls (6.95,-2.3) and (3.31,-0.67) .. (0,0) .. controls (3.31,0.67) and (6.95,2.3) .. (10.93,4.9)   ;
\draw   (273.9,109.1) .. controls (273.9,105.73) and (276.63,103) .. (280,103) .. controls (283.37,103) and (286.1,105.73) .. (286.1,109.1) .. controls (286.1,112.47) and (283.37,115.2) .. (280,115.2) .. controls (276.63,115.2) and (273.9,112.47) .. (273.9,109.1) -- cycle ;
\draw    (280,117.2) -- (280,145.2) ;
\draw [shift={(280,115.2)}, rotate = 90] [color={rgb, 255:red, 0; green, 0; blue, 0 }  ][line width=0.75]    (10.93,-4.9) .. controls (6.95,-2.3) and (3.31,-0.67) .. (0,0) .. controls (3.31,0.67) and (6.95,2.3) .. (10.93,4.9)   ;
\draw   (273.9,151.1) .. controls (273.9,147.73) and (276.63,145) .. (280,145) .. controls (283.37,145) and (286.1,147.73) .. (286.1,151.1) .. controls (286.1,154.47) and (283.37,157.2) .. (280,157.2) .. controls (276.63,157.2) and (273.9,154.47) .. (273.9,151.1) -- cycle ;
\draw    (280,159.2) -- (280,187.2) ;
\draw [shift={(280,157.2)}, rotate = 90] [color={rgb, 255:red, 0; green, 0; blue, 0 }  ][line width=0.75]    (10.93,-4.9) .. controls (6.95,-2.3) and (3.31,-0.67) .. (0,0) .. controls (3.31,0.67) and (6.95,2.3) .. (10.93,4.9)   ;
\draw   (314.1,253.9) .. controls (317.47,253.9) and (320.2,256.63) .. (320.2,260) .. controls (320.2,263.37) and (317.47,266.1) .. (314.1,266.1) .. controls (310.73,266.1) and (308,263.37) .. (308,260) .. controls (308,256.63) and (310.73,253.9) .. (314.1,253.9) -- cycle ;
\draw    (306,260.01) -- (255,260.19) ;
\draw [shift={(253,260.2)}, rotate = 359.79] [color={rgb, 255:red, 0; green, 0; blue, 0 }  ][line width=0.75]    (10.93,-4.9) .. controls (6.95,-2.3) and (3.31,-0.67) .. (0,0) .. controls (3.31,0.67) and (6.95,2.3) .. (10.93,4.9)   ;
\draw [shift={(308,260)}, rotate = 179.79] [color={rgb, 255:red, 0; green, 0; blue, 0 }  ][line width=0.75]    (10.93,-4.9) .. controls (6.95,-2.3) and (3.31,-0.67) .. (0,0) .. controls (3.31,0.67) and (6.95,2.3) .. (10.93,4.9)   ;
\draw   (246.9,254.1) .. controls (250.27,254.1) and (253,256.83) .. (253,260.2) .. controls (253,263.57) and (250.27,266.3) .. (246.9,266.3) .. controls (243.53,266.3) and (240.8,263.57) .. (240.8,260.2) .. controls (240.8,256.83) and (243.53,254.1) .. (246.9,254.1) -- cycle ;

\draw  [fill={rgb, 255:red, 0; green, 0; blue, 0 }  ,fill opacity=1 ][line width=0.75]  (279,200.2) .. controls (279,199.65) and (279.45,199.2) .. (280,199.2) .. controls (280.55,199.2) and (281,199.65) .. (281,200.2) .. controls (281,200.75) and (280.55,201.2) .. (280,201.2) .. controls (279.45,201.2) and (279,200.75) .. (279,200.2) -- cycle ;
\draw  [fill={rgb, 255:red, 0; green, 0; blue, 0 }  ,fill opacity=1 ][line width=0.75]  (279,220.2) .. controls (279,219.65) and (279.45,219.2) .. (280,219.2) .. controls (280.55,219.2) and (281,219.65) .. (281,220.2) .. controls (281,220.75) and (280.55,221.2) .. (280,221.2) .. controls (279.45,221.2) and (279,220.75) .. (279,220.2) -- cycle ;
\draw  [fill={rgb, 255:red, 0; green, 0; blue, 0 }  ,fill opacity=1 ][line width=0.75]  (279,240.2) .. controls (279,239.65) and (279.45,239.2) .. (280,239.2) .. controls (280.55,239.2) and (281,239.65) .. (281,240.2) .. controls (281,240.75) and (280.55,241.2) .. (280,241.2) .. controls (279.45,241.2) and (279,240.75) .. (279,240.2) -- cycle ;
\draw  [color={rgb, 255:red, 74; green, 144; blue, 226 }  ,draw opacity=1 ] (268,103.28) .. controls (268,101.14) and (269.74,99.4) .. (271.88,99.4) -- (288.12,99.4) .. controls (290.26,99.4) and (292,101.14) .. (292,103.28) -- (292,114.92) .. controls (292,117.06) and (290.26,118.8) .. (288.12,118.8) -- (271.88,118.8) .. controls (269.74,118.8) and (268,117.06) .. (268,114.92) -- cycle ;
\draw  [color={rgb, 255:red, 74; green, 144; blue, 226 }  ,draw opacity=1 ] (268,18.28) .. controls (268,16.14) and (269.74,14.4) .. (271.88,14.4) -- (288.12,14.4) .. controls (290.26,14.4) and (292,16.14) .. (292,18.28) -- (292,29.92) .. controls (292,32.06) and (290.26,33.8) .. (288.12,33.8) -- (271.88,33.8) .. controls (269.74,33.8) and (268,32.06) .. (268,29.92) -- cycle ;
\draw  [color={rgb, 255:red, 74; green, 144; blue, 226 }  ,draw opacity=1 ] (268,61.28) .. controls (268,59.14) and (269.74,57.4) .. (271.88,57.4) -- (288.12,57.4) .. controls (290.26,57.4) and (292,59.14) .. (292,61.28) -- (292,72.92) .. controls (292,75.06) and (290.26,76.8) .. (288.12,76.8) -- (271.88,76.8) .. controls (269.74,76.8) and (268,75.06) .. (268,72.92) -- cycle ;
\draw  [color={rgb, 255:red, 74; green, 144; blue, 226 }  ,draw opacity=1 ] (268,145.28) .. controls (268,143.14) and (269.74,141.4) .. (271.88,141.4) -- (288.12,141.4) .. controls (290.26,141.4) and (292,143.14) .. (292,145.28) -- (292,156.92) .. controls (292,159.06) and (290.26,160.8) .. (288.12,160.8) -- (271.88,160.8) .. controls (269.74,160.8) and (268,159.06) .. (268,156.92) -- cycle ;
\draw  [color={rgb, 255:red, 74; green, 144; blue, 226 }  ,draw opacity=1 ] (234.9,254.38) .. controls (234.9,252.24) and (236.64,250.5) .. (238.78,250.5) -- (255.02,250.5) .. controls (257.16,250.5) and (258.9,252.24) .. (258.9,254.38) -- (258.9,266.02) .. controls (258.9,268.16) and (257.16,269.9) .. (255.02,269.9) -- (238.78,269.9) .. controls (236.64,269.9) and (234.9,268.16) .. (234.9,266.02) -- cycle ;
\draw  [color={rgb, 255:red, 74; green, 144; blue, 226 }  ,draw opacity=1 ] (302.1,254.18) .. controls (302.1,252.04) and (303.84,250.3) .. (305.98,250.3) -- (322.22,250.3) .. controls (324.36,250.3) and (326.1,252.04) .. (326.1,254.18) -- (326.1,265.82) .. controls (326.1,267.96) and (324.36,269.7) .. (322.22,269.7) -- (305.98,269.7) .. controls (303.84,269.7) and (302.1,267.96) .. (302.1,265.82) -- cycle ;
\draw (248.9,272) node [anchor=north west][inner sep=0.75pt]    {$x_{\infty }$};
\draw (316.1,272) node [anchor=north west][inner sep=0.75pt]    {$y_{\infty }$};
\draw (294,21.68) node [anchor=north west][inner sep=0.75pt]    {$x_{0}$};
\draw (294,64.68) node [anchor=north west][inner sep=0.75pt]    {$y_{0}$};
\draw (294,106.68) node [anchor=north west][inner sep=0.75pt]    {$x_{1}$};
\draw (294,148.68) node [anchor=north west][inner sep=0.75pt]    {$y_{1}$};
\end{tikzpicture}
\end{center}
\end{exm}
Observe that $x_\infty$ is not an $R$-maximal point of any clopen subset of $X$. We conclude this section by giving an example of a descriptive \textbf{MGrz}-frame $(X,R,E)$, a clopen subset $U$ of $X$, and $x\in\textbf{max}_R U$ such that $E_Q[x]$ properly contains $E[x]$.

\begin{exm}\label{example}
Consider the frame $\mathfrak{F}=(X,R,E)$ depicted below, where the black arrows and circles indicate the relation $R$, and the $E$-clusters are shown in blue. Clearly $R$ is a quasi-order and $E$ is an equivalence relation. A direct inspection shows that commutativity holds, and hence $\mathfrak F$ is an \textbf{MS4}-frame. The points $x_n,y_n,z_n$ are isolated, $(x_n) \longrightarrow x_\infty$, $(y_n) \longrightarrow y_\infty$, and $(z_n) \longrightarrow z_\infty$. Thus, $X$ is the 3-point compactification of a discrete space, hence is a Stone space. We first show that $R$ is a continuous relation. Let 
\[
A=\{x_n\}_{n<\omega}\cup\{x_\infty\}, \ B=\{y_n\}_{n<\omega}\cup\{y_\infty\}, \mbox{ and } C=\{z_n\}_{n<\omega}\cup\{z_\infty\}.
\]
These sets partition $X$ into three clopen sets. 
Let $U\subseteq X$ be clopen.
If $U$ is disjoint from 
$A$ and $B$, then $R^{-1}[U]=U$. Otherwise,
$R^{-1}[U]$ is a cofinite subset of $X$ containing all three limit points. Therefore, $R^{-1}[U]$ is clopen. 
Next, we show that $R[t]$ is closed for each
$t\in X$. If $t$
is an isolated 
point, then $R[t]$ is a finite set of isolated points, 
hence is closed. 
If $t$ is a limit point, then $R[t]$ is closed 
because 
$R[x_\infty]=R[y_\infty]=A\cup B$ and 
$R[z_\infty]=A\cup B\cup\{z_\infty\}$.
Thus, 
$R$ is a continuous relation.

We next show that $E$ is a continuous relation. Let $U \subseteq X$ be clopen. 
If $U$ is contained in $B$, then $E[U]=U$. Let $U\subseteq A \cup C$. If $U$ is finite, then $E[U]$ is a finite set of isolated points, hence is clopen. If $U$ is infinite, then $E[U]$ contains all but finitely many equivalence classes of isolated points in $A\cup C$, and thus 
is clopen again. The clopen $U$ may fall into one of these categories or be a finite union of such clopens. Therefore, $E[U]$ is clopen. Lastly, it is straightforward to see that each $E$-equivalence class is finite, and hence closed. Consequently, $E$ is a continuous relation.



\begin{center}
\centering
\tikzset{every picture/.style={line width=0.75pt}} 
\begin{tikzpicture}[x=0.75pt,y=0.75pt,yscale=-1,xscale=1,scale=0.8]
\draw   (297.16,20.49) .. controls (295.13,22.49) and (291.88,22.48) .. (289.91,20.47) .. controls (287.93,18.47) and (287.97,15.22) .. (290,13.22) .. controls (292.03,11.22) and (295.28,11.23) .. (297.25,13.24) .. controls (299.23,15.25) and (299.19,18.49) .. (297.16,20.49) -- cycle ;
\draw    (293.58,24.01) -- (293.58,47.37) ;
\draw [shift={(293.58,22.01)}, rotate = 90] [color={rgb, 255:red, 0; green, 0; blue, 0 }  ][line width=0.75]    (10.93,-4.9) .. controls (6.95,-2.3) and (3.31,-0.67) .. (0,0) .. controls (3.31,0.67) and (6.95,2.3) .. (10.93,4.9)   ;
\draw   (288.48,53.2) .. controls (288.48,50.35) and (290.76,48.04) .. (293.58,48.04) .. controls (296.4,48.04) and (298.68,50.35) .. (298.68,53.2) .. controls (298.68,56.05) and (296.4,58.35) .. (293.58,58.35) .. controls (290.76,58.35) and (288.48,56.05) .. (288.48,53.2) -- cycle ;
\draw    (293.58,60.35) -- (293.58,83.71) ;
\draw [shift={(293.58,58.35)}, rotate = 90] [color={rgb, 255:red, 0; green, 0; blue, 0 }  ][line width=0.75]    (10.93,-4.9) .. controls (6.95,-2.3) and (3.31,-0.67) .. (0,0) .. controls (3.31,0.67) and (6.95,2.3) .. (10.93,4.9)   ;
\draw   (297.16,92.33) .. controls (295.13,94.33) and (291.89,94.32) .. (289.91,92.32) .. controls (287.93,90.31) and (287.97,87.06) .. (290,85.06) .. controls (292.03,83.07) and (295.27,83.07) .. (297.25,85.08) .. controls (299.23,87.08) and (299.19,90.33) .. (297.16,92.33) -- cycle ;
\draw    (293.58,95.85) -- (293.58,119.21) ;
\draw [shift={(293.58,93.85)}, rotate = 90] [color={rgb, 255:red, 0; green, 0; blue, 0 }  ][line width=0.75]    (10.93,-4.9) .. controls (6.95,-2.3) and (3.31,-0.67) .. (0,0) .. controls (3.31,0.67) and (6.95,2.3) .. (10.93,4.9)   ;
\draw   (288.48,124.19) .. controls (288.48,121.35) and (290.76,119.04) .. (293.58,119.04) .. controls (296.4,119.04) and (298.68,121.35) .. (298.68,124.19) .. controls (298.68,127.04) and (296.4,129.35) .. (293.58,129.35) .. controls (290.76,129.35) and (288.48,127.04) .. (288.48,124.19) -- cycle ;
\draw    (293.58,131.35) -- (293.58,154.71) ;
\draw [shift={(293.58,129.35)}, rotate = 90] [color={rgb, 255:red, 0; green, 0; blue, 0 }  ][line width=0.75]    (10.93,-4.9) .. controls (6.95,-2.3) and (3.31,-0.67) .. (0,0) .. controls (3.31,0.67) and (6.95,2.3) .. (10.93,4.9)   ;
\draw  [fill={rgb, 255:red, 0; green, 0; blue, 0 }  ,fill opacity=1 ][line width=0.75]  (292.75,165.7) .. controls (292.74,165.23) and (293.12,164.85) .. (293.58,164.85) .. controls (294.04,164.85) and (294.41,165.23) .. (294.41,165.69) .. controls (294.41,166.16) and (294.04,166.54) .. (293.58,166.54) .. controls (293.12,166.54) and (292.75,166.16) .. (292.75,165.7) -- cycle ;
\draw  [fill={rgb, 255:red, 0; green, 0; blue, 0 }  ,fill opacity=1 ][line width=0.75]  (292.75,182.6) .. controls (292.74,182.13) and (293.12,181.76) .. (293.58,181.76) .. controls (294.04,181.75) and (294.41,182.13) .. (294.41,182.6) .. controls (294.41,183.06) and (294.04,183.44) .. (293.58,183.44) .. controls (293.12,183.44) and (292.75,183.07) .. (292.75,182.6) -- cycle ;
\draw  [fill={rgb, 255:red, 0; green, 0; blue, 0 }  ,fill opacity=1 ][line width=0.75]  (292.75,199.5) .. controls (292.74,199.04) and (293.12,198.66) .. (293.58,198.66) .. controls (294.04,198.66) and (294.41,199.03) .. (294.41,199.5) .. controls (294.41,199.97) and (294.04,200.34) .. (293.58,200.34) .. controls (293.12,200.35) and (292.75,199.97) .. (292.75,199.5) -- cycle ;
\draw   (322.09,211.08) .. controls (324.91,211.08) and (327.2,213.39) .. (327.2,216.24) .. controls (327.2,219.08) and (324.91,221.39) .. (322.09,221.39) .. controls (319.28,221.39) and (316.99,219.08) .. (316.99,216.24) .. controls (316.99,213.39) and (319.28,211.08) .. (322.09,211.08) -- cycle ;
\draw    (314.99,216.24) -- (273,216.4) ;
\draw [shift={(271,216.4)}, rotate = 359.79] [color={rgb, 255:red, 0; green, 0; blue, 0 }  ][line width=0.75]    (10.93,-4.9) .. controls (6.95,-2.3) and (3.31,-0.67) .. (0,0) .. controls (3.31,0.67) and (6.95,2.3) .. (10.93,4.9)   ;
\draw [shift={(316.99,216.24)}, rotate = 179.79] [color={rgb, 255:red, 0; green, 0; blue, 0 }  ][line width=0.75]    (10.93,-4.9) .. controls (6.95,-2.3) and (3.31,-0.67) .. (0,0) .. controls (3.31,0.67) and (6.95,2.3) .. (10.93,4.9)   ;
\draw   (265.9,211.25) .. controls (268.72,211.25) and (271,213.56) .. (271,216.4) .. controls (271,219.25) and (268.72,221.56) .. (265.9,221.56) .. controls (263.08,221.56) and (260.8,219.25) .. (260.8,216.4) .. controls (260.8,213.56) and (263.08,211.25) .. (265.9,211.25) -- cycle ;
\draw    (265.9,223.56) -- (265.9,246.92) ;
\draw [shift={(265.9,221.56)}, rotate = 90] [color={rgb, 255:red, 0; green, 0; blue, 0 }  ][line width=0.75]    (10.93,-4.9) .. controls (6.95,-2.3) and (3.31,-0.67) .. (0,0) .. controls (3.31,0.67) and (6.95,2.3) .. (10.93,4.9)   ;
\draw   (265.9,246.92) .. controls (268.72,246.92) and (271,249.22) .. (271,252.07) .. controls (271,254.92) and (268.72,257.23) .. (265.9,257.23) .. controls (263.08,257.23) and (260.8,254.92) .. (260.8,252.07) .. controls (260.8,249.22) and (263.08,246.92) .. (265.9,246.92) -- cycle ;
\draw   (64.14,249.23) .. controls (66.06,247.13) and (69.3,246.96) .. (71.39,248.86) .. controls (73.47,250.75) and (73.6,254) .. (71.68,256.1) .. controls (69.76,258.21) and (66.52,258.37) .. (64.44,256.48) .. controls (62.36,254.58) and (62.22,251.33) .. (64.14,249.23) -- cycle ;
\draw   (135.96,249.25) .. controls (137.87,247.14) and (141.11,246.95) .. (143.2,248.84) .. controls (145.3,250.72) and (145.45,253.96) .. (143.54,256.08) .. controls (141.64,258.2) and (138.4,258.38) .. (136.3,256.5) .. controls (134.21,254.61) and (134.06,251.37) .. (135.96,249.25) -- cycle ;
\draw  [fill={rgb, 255:red, 0; green, 0; blue, 0 }  ,fill opacity=1 ][line width=0.75]  (186.75,253.5) .. controls (186.29,253.5) and (185.91,253.13) .. (185.91,252.67) .. controls (185.91,252.21) and (186.28,251.83) .. (186.75,251.83) .. controls (187.22,251.83) and (187.59,252.21) .. (187.59,252.67) .. controls (187.6,253.13) and (187.22,253.5) .. (186.75,253.5) -- cycle ;
\draw  [fill={rgb, 255:red, 0; green, 0; blue, 0 }  ,fill opacity=1 ][line width=0.75]  (203.66,253.5) .. controls (203.19,253.5) and (202.81,253.13) .. (202.81,252.67) .. controls (202.81,252.21) and (203.19,251.83) .. (203.65,251.83) .. controls (204.12,251.83) and (204.5,252.21) .. (204.5,252.67) .. controls (204.5,253.13) and (204.12,253.5) .. (203.66,253.5) -- cycle ;
\draw  [fill={rgb, 255:red, 0; green, 0; blue, 0 }  ,fill opacity=1 ][line width=0.75]  (220.56,253.5) .. controls (220.1,253.5) and (219.72,253.13) .. (219.72,252.67) .. controls (219.72,252.21) and (220.09,251.83) .. (220.56,251.83) .. controls (221.02,251.83) and (221.4,252.21) .. (221.4,252.67) .. controls (221.4,253.13) and (221.03,253.5) .. (220.56,253.5) -- cycle ;
\draw  [color={rgb, 255:red, 74; green, 144; blue, 226 }  ,draw opacity=1 ] (57.18,254.99) .. controls (55.48,253.37) and (55.41,250.69) .. (57.03,248.99) -- (288.88,5.69) .. controls (290.5,3.99) and (293.19,3.93) .. (294.89,5.54) -- (304.12,14.34) .. controls (305.81,15.95) and (305.88,18.64) .. (304.26,20.34) -- (72.41,263.64) .. controls (70.79,265.34) and (68.1,265.4) .. (66.4,263.79) -- cycle ;
\draw  [color={rgb, 255:red, 74; green, 144; blue, 226 }  ,draw opacity=1 ] (127.25,255.07) .. controls (125.55,253.46) and (125.49,250.77) .. (127.1,249.07) -- (290.58,77.53) .. controls (292.2,75.83) and (294.88,75.77) .. (296.58,77.38) -- (305.79,86.16) .. controls (307.48,87.78) and (307.55,90.46) .. (305.93,92.16) -- (142.46,263.71) .. controls (140.84,265.4) and (138.16,265.47) .. (136.46,263.85) -- cycle ;
\draw  [color={rgb, 255:red, 74; green, 144; blue, 226 }  ,draw opacity=1 ] (256,209.6) .. controls (256,207.39) and (257.79,205.6) .. (260,205.6) -- (272,205.6) .. controls (274.21,205.6) and (276,207.39) .. (276,209.6) -- (276,258.6) .. controls (276,260.81) and (274.21,262.6) .. (272,262.6) -- (260,262.6) .. controls (257.79,262.6) and (256,260.81) .. (256,258.6) -- cycle ;
\draw    (71.39,248.86) -- (288.52,21.92) ;
\draw [shift={(289.91,20.47)}, rotate = 133.74] [color={rgb, 255:red, 0; green, 0; blue, 0 }  ][line width=0.75]    (10.93,-3.29) .. controls (6.95,-1.4) and (3.31,-0.3) .. (0,0) .. controls (3.31,0.3) and (6.95,1.4) .. (10.93,3.29)   ;
\draw    (143.2,248.84) -- (288.54,93.78) ;
\draw [shift={(289.91,92.32)}, rotate = 133.15] [color={rgb, 255:red, 0; green, 0; blue, 0 }  ][line width=0.75]    (10.93,-3.29) .. controls (6.95,-1.4) and (3.31,-0.3) .. (0,0) .. controls (3.31,0.3) and (6.95,1.4) .. (10.93,3.29)   ;
\draw  [color={rgb, 255:red, 74; green, 144; blue, 226 }  ,draw opacity=1 ] (286.58,48.2) .. controls (286.58,46.65) and (287.83,45.4) .. (289.38,45.4) -- (297.78,45.4) .. controls (299.33,45.4) and (300.58,46.65) .. (300.58,48.2) -- (300.58,58.2) .. controls (300.58,59.75) and (299.33,61) .. (297.78,61) -- (289.38,61) .. controls (287.83,61) and (286.58,59.75) .. (286.58,58.2) -- cycle ;
\draw  [color={rgb, 255:red, 74; green, 144; blue, 226 }  ,draw opacity=1 ] (286.58,119.19) .. controls (286.58,117.65) and (287.83,116.39) .. (289.38,116.39) -- (297.78,116.39) .. controls (299.33,116.39) and (300.58,117.65) .. (300.58,119.19) -- (300.58,129.2) .. controls (300.58,130.74) and (299.33,132) .. (297.78,132) -- (289.38,132) .. controls (287.83,132) and (286.58,130.74) .. (286.58,129.2) -- cycle ;
\draw  [color={rgb, 255:red, 74; green, 144; blue, 226 }  ,draw opacity=1 ] (315.09,211.24) .. controls (315.09,209.69) and (316.35,208.44) .. (317.89,208.44) -- (326.29,208.44) .. controls (327.84,208.44) and (329.09,209.69) .. (329.09,211.24) -- (329.09,221.24) .. controls (329.09,222.78) and (327.84,224.04) .. (326.29,224.04) -- (317.89,224.04) .. controls (316.35,224.04) and (315.09,222.78) .. (315.09,221.24) -- cycle ;

\draw (227,208.1) node [anchor=north west][inner sep=0.75pt]    {$x_{\infty }$};
\draw (325,224) node [anchor=north west][inner sep=0.75pt]    {$y_{\infty }$};
\draw (305.68,20.26) node [anchor=north west][inner sep=0.75pt]    {$x_{0}$};
\draw (305.68,56.6) node [anchor=north west][inner sep=0.75pt]    {$y_{0}$};
\draw (305.68,92.1) node [anchor=north west][inner sep=0.75pt]    {$x_{1}$};
\draw (305.68,127.59) node [anchor=north west][inner sep=0.75pt]    {$y_{1}$};
\draw (278,263) node [anchor=north west][inner sep=0.75pt]    {$z_{\infty }$};
\draw (53.18,263) node [anchor=north west][inner sep=0.75pt]    {$z_{0}$};
\draw (123,263) node [anchor=north west][inner sep=0.75pt]    {$z_{1}$};
\end{tikzpicture}
\end{center}
We thus proved that $\mathfrak F$
is a descriptive \textbf{MS4}-frame. Moreover, it is a descriptive \textbf{MGrz}-frame by \Cref{fine-esakia} {since} $\{x_\infty,y_\infty\}$ is the only non-trivial $R$-cluster and neither $x_\infty$ nor $y_\infty$ is a quasi-maximal point in any clopen set.
Observe that $z_\infty\in\textbf{max}_R C$. However, $E_Q[z_\infty]=\{x_\infty,y_\infty,z_\infty\}$ and $E[z_\infty]=\{x_\infty,z_\infty\}$.
\end{exm}

\section{Selective filtration for \textbf{MGrz}}{\label{construction}}

In this section, we prove that \textbf{MGrz} has the fmp, thus establishing our main result. 
To this end, we need to show that each non-theorem of \textbf{MGrz} is refuted in a finite \textbf{MGrz}-frame. If $\textbf{MGrz}\not\vdash\varphi$, then $\varphi$ 
is refuted in some descriptive \textbf{MGrz}-frame, from which we will select a finite \textbf{MGrz}-frame that still refutes $\varphi$. We do this by selecting 
strongly maximal points, a sufficient supply of which is provided by \Cref{smax}. To perform selective filtration, we utilize the following:
%

\begin{lem}{\label{max_smax}}
    Let $U$ be a clopen set in a descriptive {\textbf{MGrz}}-frame $\mathfrak{F}=(X,R,E)$ and $x,y,t\in X$.
    \begin{enumerate}
        \item If $x\in{\textbf{smax}}_R U$ and $y\in{\textbf{max}}_R U$ with $x\mathrel{E}y$, then $y\in{\textbf{smax}}_R U$. \label{max_smax1}

        \item If $x\in{\textbf{smax}}_R U$ and $y\in Q[t] \cap U$ with $x\mathrel{E}t$, then $x\mathrel{E}y$. \label{max_smax2}

        \item If $x\in{\textbf{smax}}_R U$ and $y\in R[t] \cap E[U]$ with $x\mathrel{E}t$, then $x\mathrel{E}y$. \label{max_smax3}
    \end{enumerate}
\end{lem}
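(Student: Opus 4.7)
The plan is to prove all three parts by carefully applying the commutativity condition for \textbf{MK}-frames (to move an $R$-successor of one $E$-related point to an $R$-successor of the other) and then invoking the definition of strong maximality of $x$.

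For (\labelcref{max_smax1}), I would suppose $y\mathrel{Q}z$ and $y\mathrel{\cancel{E}}z$ for some $z \in X$, and show $z\notin U$. Unfolding $Q$, pick $w \in X$ with $y\mathrel{R}w$ and $w\mathrel{E}z$. Since $x\mathrel{E}y$ and $y\mathrel{R}w$, commutativity gives $u \in X$ with $x\mathrel{R}u$ and $u\mathrel{E}w$, hence $u\mathrel{E}z$ by transitivity of $E$, so $x\mathrel{Q}z$. Moreover $x\mathrel{\cancel{E}}z$, because $x\mathrel{E}z$ together with $x\mathrel{E}y$ would give $y\mathrel{E}z$, contradicting the hypothesis. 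Strong maximality of $x$ then forces $z\notin U$, as required.

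For (\labelcref{max_smax2}), the goal is to reduce to strong maximality of $x$ applied to $y$. From $y\in Q[t]$ pick $w\in X$ with $t\mathrel{R}w$ and $w\mathrel{E}y$. Using $x\mathrel{E}t$ and $t\mathrel{R}w$, commutativity yields $u\in X$ with $x\mathrel{R}u$ and $u\mathrel{E}w$, so $u\mathrel{E}y$ and therefore $x\mathrel{Q}y$. Since $y\in U$ and $x\in\textbf{smax}_R U$, the contrapositive of the strong maximality condition gives $x\mathrel{E}y$.

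For (\labelcref{max_smax3}), I would combine commutativity with part (\labelcref{max_smax2}). From $y\in E[U]$ pick $u\in U$ with $u\mathrel{E}y$. Using $x\mathrel{E}t$ and $t\mathrel{R}y$, commutativity yields $v\in X$ with $x\mathrel{R}v$ and $v\mathrel{E}y$, hence $v\mathrel{E}u$, which gives $x\mathrel{Q}u$. Since $u\in U$ and $x\in\textbf{smax}_R U$, strong maximality forces $x\mathrel{E}u$, and then $x\mathrel{E}u\mathrel{E}y$ yields $x\mathrel{E}y$ by transitivity.

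The only real work is bookkeeping with the commutativity square and the symmetry/transitivity of $E$; no subtle topological or global frame arguments are involved. The main conceptual point, which each part exploits in the same way, is that strong maximality of $x$ is transferred to any $E$-neighbour of $x$ by pulling $R$-successors back across $E$ via commutativity. I do not foresee a genuine obstacle beyond taking care with the direction of the relations when applying commutativity.
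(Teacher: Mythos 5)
Your proof is correct and follows essentially the same route as the paper's: in each part one shows $x\mathrel{Q}(\cdot)$ by transporting the relevant $Q$- or $R$-arrow across $E$ and then invokes the strong maximality of $x$. The only difference is that you unfold the commutativity square explicitly where the paper simply asserts, e.g., that $x\mathrel{E}y$ and $y\mathrel{Q}t$ imply $x\mathrel{Q}t$ (cf.\ \Cref{Q_0}); this is a presentational, not a mathematical, difference.
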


\begin{proof}
(\labelcref{max_smax1}) Suppose $y\mathrel{Q}t$ and $y\mathrel{\cancel{E}}t$ for some $t\in X$. From $x\mathrel{E}y$ it follows that $x\mathrel{Q}t$ and $x\mathrel{\cancel{E}}t$. Since $x\in\textbf{smax}_R U$, we have $t\notin U$. Consequently,  $y\in{\textbf{smax}}_R U$.

(\labelcref{max_smax2}) First, observe that $t\mathrel{Q}y$ implies $x\mathrel{Q}y$. Since $y\in U$, the strong maximality of $x$ in $U$ forces $x\mathrel{E}y$.

(\labelcref{max_smax3}) Since $y\in E[U]$, there is $u\in U$ with $y\mathrel{E}u$. Therefore, $t\mathrel{R}y$ and $y\mathrel{E}u$, so $t\mathrel{Q}u$. This implies $x\mathrel{Q}u$, which along with the strong maximality of $x$ in $U$ gives $x\mathrel{E}u$. Thus, $x\mathrel{E}y$.
\end{proof}

%

Now let $\textbf{MGrz}\not\vdash\varphi$. Then there is an $\textbf{MGrz}$-algebra $\mathbbm{B}=(B,\Diamond,\exists)$ refuting $\varphi$, and hence $\varphi$ is refuted in the descriptive \textbf{MGrz}-frame $\mathfrak{F}=(X,R,E)$ dual to $\mathbbm{B}$. 
Therefore, there is a valuation $v$ on $\mathfrak F$  
such that $\mathfrak{F}\not\models_v\varphi$. We will select a finite subframe $\widehat{\mathfrak{F}}=(\widehat{X},\widehat{R},\widehat{E})$ of $\mathfrak{F}$ such that $\widehat{\mathfrak{F}}$ is an \textbf{MGrz}-frame and refutes $\varphi$.

{The} particular strategy we adopt combines the approaches of \cite{Grefe} and \cite{GBm}. In the former, maximal points are selected as witnesses, while in the latter, some $Q$-arrows are turned into $\widehat{R}$-arrows. 
 We will build $\widehat{\mathfrak{F}}$ by selecting strongly maximal points from $\mathfrak{F}$ that are necessary for the refutation of $\varphi$.
For each selected point, a new copy will be created. If a point $t$ is selected, we denote the copy by $\widehat{t}$. Further, if a copy $\widehat{t}$ of $t$ has already been added, we will not introduce it again, i.e., there will be no duplicates. This will allow us to control the size of the selected subset of points. The construction proceeds in stages, and at each stage we produce a partially ordered \textbf{MS4}-frame. 

To begin with, let $S:=\text{Sub}(\varphi)$ be the set of subformulae of $\varphi$. Define an equivalence relation $\sim_S$ on $X$ by
\begin{equation*}
    x \mathrel{\sim_S} y \iff \text{ for any } \psi\in S, \  x\models_v \psi \mbox{ iff } y\models_v \psi.
\end{equation*}

For each $x\in X$, let
\begin{equation*}
    W_x^{\exists}=\{\exists\psi\in S \mid x\models_v\exists\psi \text{ but } x\not\models_v \psi\},
\end{equation*}
\begin{equation*}
    W_x^{\Diamond}=\{\Diamond\psi\in S \mid x\models_v\Diamond\psi \text{ but } x\not\models_v \psi\}.
\end{equation*}
Here, the letter $W$ stands for ``witnesses'', which we refer to as $\exists$-witnesses and $\Diamond$-witnesses.

Since $\mathfrak{F}\not\models_v\varphi$, there is $u\in X$ such that $u\in v(\neg\varphi)$. By \Cref{smax}, there is 
$x\in Q[u]\cap\textbf{smax}_R v(\neg\varphi)$. 
Let
\begin{equation*}
    X_0=\{\widehat{x}\},\enspace R_0=X_0^2,\enspace E_0=X_0^2,
\end{equation*}
and set $\mathfrak{F}_0=(X_0,R_0,E_0)$. Clearly $\mathfrak{F}_0$ is a partially ordered \textbf{MS4}-frame. 

Suppose a partially ordered \textbf{MS4}-frame $\mathfrak{F}_{k-1}=(X_{k-1},R_{k-1},E_{k-1})$ has already been constructed. 
We construct the frame $\mathfrak{F}_k=(X_k,R_k,E_k)$ in stages by first introducing all the necessary $\exists$-witnesses, then the $\Diamond$-witnesses within given $E$-clusters, 
after that the remaining $\Diamond$-witnesses, and finally ensuring that commutativity holds.  For the reader's convenience, the stages of the construction are illustrated in the diagram before \Cref{rmk}. 

\underline{\textbf{$\exists$-Step}}: 
Set $X_k^\exists=X_{k-1}$, $R_k^\exists=R_{k-1}$, $E_k^\exists=E_{k-1}$.

Let $\widehat{t}\in X_{k-1}$ and $\exists\psi\in W_t^\exists$. If $\widehat{t} \mathrel{E_{k}^\exists} \widehat{u}$ for some $\widehat{u}\in X_k^\exists$ with $u\models_v\psi$ in $(\mathfrak{F},v)$, 
there is nothing to do. Otherwise, we make use of the following lemma to select $\exists$-witnesses. 

\begin{lem}{\label{horz}}
    Let $z\in{\textbf{smax}}_R U$ for some clopen $U$ of $X$. If $\exists\psi\in W_z^{\exists}$, then there is $y \in {\textbf{smax}}_R (E[U]\cap v(\psi))$ such that $y \mathrel{E} z$. 
\end{lem}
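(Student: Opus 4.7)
The plan is to build the witness $y$ by applying \Cref{smax} to a suitable clopen set, and then use the strong maximality of $z$ in $U$ to force the $E$-relationship.

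First I would set $V := E[U] \cap v(\psi)$. Since $U$ is clopen and $E$ is continuous, $E[U] = E^{-1}[U]$ is clopen; and $v(\psi)$ is clopen by a straightforward induction on $\psi$ using that $\mathfrak F$ is a descriptive frame (so $v$ takes values in the clopen algebra, and $R,E$ being continuous means $\Diamond,\exists$ preserve clopen sets). Hence $V$ is clopen. Next, since $\exists\psi\in W_z^\exists$ gives $z\models_v \exists\psi$, there is $w\in E[z]$ with $w\models_v\psi$; because $z\in U$ we have $w\in E[U]$, so $w\in V$.

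Now I would apply \Cref{smax} to the clopen $V$ and the point $w\in V$ to obtain
\[
y\in Q[w]\cap{\textbf{smax}}_R V.
\]
Since $y\in V\subseteq E[U]\cap v(\psi)$, this is the strongly maximal witness we want; the only thing left is to show $y\mathrel{E}z$. From $w\mathrel{Q}y$, unfold to get $u\in X$ with $w\mathrel{R}u$ and $u\mathrel{E}y$. Using $z\mathrel{E}w$ and $w\mathrel{R}u$, the commutativity condition of \textbf{MK}-frames yields $u'\in X$ with $z\mathrel{R}u'$ and $u'\mathrel{E}u$, so $u'\mathrel{E}y$ and in particular $u'\in E[U]$ (because $y\in E[U]$ and $E$ is transitive).

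The step where the strong maximality of $z$ is actually used—and the only delicate point in the argument—is the final push: we have $z\in{\textbf{smax}}_R U$ with $z\mathrel{R}u'$ and $u'\in E[U]$, and we want to conclude $z\mathrel{E}u'$ (hence $z\mathrel{E}y$). This is precisely the content of \crefdefpart{max_smax}{max_smax3} applied with $x=t=z$: it gives $z\mathrel{E}u'$, and then $z\mathrel{E}u'\mathrel{E}y$ yields $z\mathrel{E}y$, as required. The main obstacle is conceptual rather than computational: one cannot apply the definition of strong maximality directly to $y$ because $y$ need only lie in $E[U]$, not in $U$; the argument must pass through the intermediate $R$-successor $u'$ (whose existence is guaranteed by commutativity) and invoke the $E[U]$-version of strong maximality encapsulated in \Cref{max_smax}.
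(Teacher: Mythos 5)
Your proof is correct, but it takes a different route from the paper's. The paper picks $u\in v(\psi)$ with $z\mathrel{E}u$ and applies the plain Fine-Esakia principle (\Cref{fe_principle}) to obtain $y\in R[u]\cap\textbf{max}_R(E[U]\cap v(\psi))$; it then proves $y\mathrel{E}z$ via \crefdefpart{max_smax}{max_smax2} (which is slightly more direct because $y$ is an honest $R$-successor of $u$, so no commutativity detour is needed), and finally must verify \emph{separately}, by unwinding the definition of strong maximality and using $z\in\textbf{smax}_R U$ a second time, that $y$ is in fact \emph{strongly} maximal and not merely maximal in $E[U]\cap v(\psi)$. You instead invoke \Cref{smax} on the clopen $V=E[U]\cap v(\psi)$, which hands you $y\in\textbf{smax}_R V$ for free; the price is that $y$ is only a $Q$-successor of $w$, so you must unfold $Q$, apply the commutativity condition to transport the $R$-step back to $z$, and then use \crefdefpart{max_smax}{max_smax3}. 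Both arguments are sound and of comparable length; yours trades the paper's explicit strong-maximality verification for a commutativity argument, and has the minor aesthetic advantage of making \Cref{smax} do the work it was designed for. All the tools you cite (\Cref{smax}, \Cref{max_smax}, clopenness of $E[U]$ and of $v(\psi)$ in a descriptive frame) are available at this point in the paper, and your application of \crefdefpart{max_smax}{max_smax3} with $x=t=z$ is legitimate since $E$ is reflexive.
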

\begin{proof}
    Since $\exists\psi\in W_z^{\exists}$, there is $u\in v(\psi)$ with $z\mathrel{E}u$. Because $u\in E[U] \cap v(\psi)$, by the Fine-Esakia principle there is $y\in R[u]\cap \textbf{max}_R(E[U]\cap v(\psi))$. We show that $y\mathrel{E}z$. Since $y\in E[U]$, there is $s\in U$ with $s\mathrel{E}y$. From $u\mathrel{R}y$ and $y\mathrel{E}s$ it follows that $u\mathrel{Q}s$. Because $z\in\textbf{smax}_R U$ and $z\mathrel{E}u$, we must have $z\mathrel{E}s$ by \crefdefpart{max_smax}{max_smax2}, so $y\mathrel{E}z$. It remains to show that $y\in\textbf{smax}_R (E[U]\cap v(\psi))$. Let $y\mathrel{Q}t$ and $t\in E[U]\cap v(\psi)$. Then $t\mathrel{E}w$ for some $w\in U$. Since $z\in{\textbf{smax}}_R U$ and $z\mathrel{Q}w$, we must have $z\mathrel{E}w$, so $y\mathrel{E}w$, and hence $y\mathrel{E}t$. Thus, $y\in\textbf{smax}_R (E[U]\cap v(\psi))$.
\end{proof}

If there is $\widehat{u}$ in $X_k^\exists$ such that $t\mathrel{E}u$, $u\models_v\psi$, and $u$ is strongly maximal in some clopen set in $X$, then add $(\widehat{t},\widehat{u})$ to $E_k^\exists$ and generate the least equivalence relation. Otherwise, since $\widehat{t}\in X_{k-1}$, we have $t\in\textbf{smax}_R U$ for some clopen $U$ (because we only select strongly maximal points at each step). Hence, by \Cref{horz}, there is $w\in \textbf{smax}_R (E[U]\cap v(\psi))$ such that $t\mathrel{E}w$ and $w\models_v\psi$. We add $\widehat{w}$ to $X_k^\exists$, $(\widehat{w},\widehat{w})$ to $R_k^\exists$, $(\widehat{t},\widehat{w})$ to $E_k^\exists$, and generate the least equivalence relation. Repeat this process until all the formulae in $W_t^\exists$ have $\exists$-witnesses for all $\widehat{t}\in X_{k-1}$ and proceed to the next step. 

\underline{\textbf{$\Diamond$-Step}}: Set $X_k^\Diamond=X_k^\exists$, $R_k^\Diamond=R_k^\exists$, $E_k^\Diamond=E_k^\exists$.

Suppose $\widehat{y}\in X_k^\exists$ and $\Diamond\psi\in W_y^\Diamond$. If $\widehat{y} \mathrel{R_{k}^\Diamond}  \widehat{z}$ for some $\widehat{z}\in X_k^\Diamond$ and $z\models_v\psi$ in $(\mathfrak{F},v)$, there is nothing to do. Otherwise, we use the following lemma to select $\Diamond$-witnesses.

\begin{lem}{\label{vert}}
    For $\Diamond\psi\in W_y^{\Diamond}$ let 
    \begin{equation*}
            A = v(\Diamond\psi)\cap\bigcap\bigl\{ v(\neg\Diamond\alpha)\mid \Diamond\alpha\in S,\,  y\not\models_v \Diamond\alpha \bigr\}.
        \end{equation*}
    Then there is $z\in X$ such that the following conditions are satisfied:
    \begin{enumerate}
        \item $y\mathrel{Q}z$, $y\neq z$, and $z\in\textbf{smax}_R A\cap\textbf{max}_R v(\psi)$; \label{vert3}
        
        \item $y\not\models_v\Diamond\alpha$ implies $z\not\models_v\alpha$ for each $\Diamond\alpha\in S$; \label{vert2}
        
        \item If $y\mathrel{E}z$, then there is $u\in {\textbf{smax}}_R A \cap {\textbf{max}}_R v(\psi)$ such that $y\mathrel{R}u$ and $y\mathrel{E}u$; \label{vert4}

        \item If $z\mathrel{Q}t$ and $z \mathrel{\cancel{E}} t$, then $t\mathrel{\cancel{\sim}_S}y$. \label{vert6}
    \end{enumerate}
\end{lem}

\begin{proof}
(\labelcref{vert3}) Since $S$ is finite, $A$ is a clopen set. Clearly, $y\in A$. Therefore, by \Cref{smax}, there exists $z\in Q[y]\cap\textbf{smax}_R A$. 
We show that $z\in\textbf{max}_R v(\psi)$. Since $z\in A$, we have $z\models_v\Diamond\psi$, so $t\models_v\psi$ for some $t\in R[z]$. If $t\neq z$, then $z\mathrel{R}t$ and $z\in\textbf{smax}_R A\subseteq\textbf{max}_R A$ imply that $t\notin A$. However, ${t\in \bigcap\bigl\{ v(\neg\Diamond\alpha)\mid \Diamond\alpha\in S,\, y\not\models_v \Diamond\alpha\bigr\}}$ because $z\mathrel{R}t$ and $z\in v(\neg\Diamond\alpha)$ for all $\Diamond\alpha\in S$. Therefore, $t\notin v(\Diamond\psi)$, and hence $t\not\models_v\psi$, a contradiction. Thus, $t=z$, and so $z\in\textbf{max}_R v(\psi)$. Consequently, $z\in\textbf{smax}_R A\cap\textbf{max}_R v(\psi)$. Finally, since $z\models_v\psi$ and $y\not\models_v\psi$ (because $\Diamond\psi\in W_y^{\Diamond}$), we see that $z\neq y$.

(\labelcref{vert2}) Let $\Diamond\alpha\in S$ and $y\not\models_v\Diamond\alpha$. Then $A\subseteq v(\neg\Diamond\alpha)$. Since $z\in A$, we see that $z\in v(\neg\Diamond\alpha)$, so $z\not\models_v\Diamond\alpha$. Thus, $z\not\models_v\alpha$.

(\labelcref{vert4}) Suppose $y\mathrel{E}z$. Since 
$y\models_v\Diamond\psi$, we have $t\models_v\psi$ for some $t\in R[y]$. Choose $u\in R[t]\cap \textbf{max}_Rv(\psi)$. Then $y\mathrel{R}u$. We show that $u\in\textbf{max}_R v(\Diamond\psi)$. Clearly $u\in v(\Diamond\psi)$. Suppose $u\mathrel{R}w$, $u\neq w$, and $w\models_v\Diamond\psi$. Then $w\mathrel{R}u'$ for some $u'\in v(\psi)$. Therefore, $u\mathrel{R}u'$, and so $u=u'$ because $u\in\textbf{max}_R v(\psi)$. We have $u\mathrel{R}w\mathrel{R}u$, with $u\in\textbf{max}_R v(\psi)$ and $w\notin v(\psi)$ (because  $u\in\textbf{max}_R v(\psi)$, $u\mathrel{R}w$, and $u\neq w$). This cannot happen since $(X,R,E)$ is a descriptive \textbf{MGrz}-frame (see \Cref{enter-exit}). Thus, $u\in \textbf{max}_R v(\Diamond\psi)$. We use this to prove that $u\in\textbf{smax}_R A$. First note that $u\in A$ since $y\mathrel{R}u$ implies that $u\in\bigcap\bigl\{ v(\neg\Diamond\alpha)\mid \Diamond\alpha\in S,  y\not\models_v \Diamond\alpha\bigr\}$. Moreover, $u\in\textbf{max}_R v(\Diamond\psi)$ implies that $u\in \textbf{max}_R A$ since $A\subseteq v(\Diamond\psi)$ and $u\in A$. We must have $y\mathrel{E}u$ as otherwise we get $z\mathrel{Q}u$, $z \mathrel{\cancel{E}} u$, and $u\in A$, contradicting that $z\in \textbf{smax}_R A$ (see  (\labelcref{vert3})). Thus, $u\in \textbf{smax}_R A$ by \crefdefpart{max_smax}{max_smax1}, and hence $u$ satisfies all the conditions in~(\labelcref{vert4}).

(\labelcref{vert6}) Suppose $z\mathrel{Q}t$, $z \mathrel{\cancel{E}} t$, and $t\mathrel{\sim_S} y$. Then $t\in A$, contradicting 
that $z\in \textbf{smax}_R A$ (see (\labelcref{vert3})). 
\end{proof}


\underline{\textbf{Horizontal Step}}: Adding $\Diamond$-witnesses within $E$-clusters.

If there is $\widehat{u} \in X_k^\Diamond$ such that $y\mathrel{R}u$, $y\mathrel{E}u$, $u\in\textbf{smax}_R A$, and $u\models_v\psi$ in $(\mathfrak{F},v)$, then add $(\widehat{y},\widehat{u})$ to $R_k^\Diamond$ and take the reflexive-transitive closure; also, add $(\widehat{y},\widehat{u})$ to $E_k^\Diamond$ and generate the least equivalence relation.

Otherwise, by \crefdefpart{vert}
{vert3}, 
there is $z\in \textbf{smax}_R A$ such that $y\mathrel{Q}z$ and $z\models_v\psi$. If $y\mathrel{E}z$, then by \crefdefpart{vert}{vert4}, there is $u\in \textbf{smax}_R A$ with $y\mathrel{R}u$, $y\mathrel{E}u$, and $u\models_v\psi$. Add $\widehat{u}$ to $X_k^\Diamond$, $(\widehat{y},\widehat{u})$ to $R_k^\Diamond$, and take the reflexive-transitive closure; also, add $(\widehat{y},\widehat{u})$ to $E_k^\exists$ and generate the least equivalence relation.  
Repeat this process until all $\Diamond$-witnesses from $E[y]$ for $\widehat{y}$ have been added. 
Repeat this for all $\widehat{y}\in X_k^\exists$ 
and proceed to the next step.

\underline{\textbf{Vertical Step}}: Adding $\Diamond$-witnesses outside $E$-clusters.

If there is already a point $\widehat{z}\in X_k^\Diamond$ such that $y\mathrel{Q}z$, $y \mathrel{\cancel{E}} z$, $z\in\textbf{smax}_R A$, and $z\models_v \psi$ in $(\mathfrak{F},v)$, then add $(\widehat{y},\widehat{z})$ to $R_k^\Diamond$ and take its reflexive-transitive closure.

If no such point exists, by \crefdefpart{vert}
{vert3} 
there is $z\in \textbf{smax}_R A$ such that $y\mathrel{Q}z$ and $z\models_v\psi$. Moreover, we must have $z \mathrel{\cancel{E}} y$ since the $z\mathrel{E}y$ case is handled in the horizontal step. Add $\widehat{z}$ to $X_k^\Diamond$, add $(\widehat{y},\widehat{z})$ to $R_k^\Diamond$, and take the reflexive-transitive closure. Add 
$(\widehat{z},\widehat{z})$ to $E_k^\Diamond$.

\textbf{Note}: This is the only step in the construction where a $Q$-arrow in the original frame is turned into an $R_k^\Diamond$-arrow in the selected subframe. 

If $z\mathrel{E}t$ in $X$ and $\widehat{t}$ has been added previously, add $(\widehat{z},\widehat{t})$ to $E_k^\Diamond$ and generate the least equivalence relation. Add all such $\Diamond$-witnesses for $\widehat{y}$. Repeat this for all $\widehat{y}\in X_k^\exists$.  

\begin{lem}\label{EQ-lemma}
    For $\widehat{u},\widehat{w}\in X_k^\Diamond$, 
    \begin{enumerate}
        \item $\widehat{u}\mathrel{E_k^\Diamond}\widehat{w} \iff u\mathrel{E}w$. {\label{E-lemma}}

        \item $\widehat{u}\mathrel{R_k^\Diamond}\widehat{w}\implies u\mathrel{Q}w$. {\label{Q-lemma}}
    \end{enumerate}
\end{lem}
\begin{proof}
    (\labelcref{E-lemma}) For the left-to-right implication, note that if $\widehat{u}\mathrel{E_k^\Diamond}\widehat{w}$, then we must have $u\mathrel{E}w$ because we only introduce points into ${E}_k^\Diamond$-clusters if their original copies are in $E$-clusters. Thus, $\widehat{u}\mathrel{E_k^\Diamond}\widehat{w}$ implies $u\mathrel{E}w$. For the right-to-left implication, each time a new $E_k^\Diamond$-cluster is introduced in the vertical step, we add the necessary $E_k^\Diamond$ relations to ensure $u\mathrel{E}w\implies\widehat{u}\mathrel{E_k^\Diamond}\widehat{w}$.

    (\labelcref{Q-lemma}) We introduce new $R_k^\Diamond$-relations only if they correspond to $R$- or $Q$-relations in the original frame, and $Q$ is transitive.
\end{proof}

\underline{\textbf{Commutativity Step}}: 

    We now augment the sets $X_k^\Diamond, R_k^\Diamond, E_k^\Diamond$ so that the resulting structure satisfies commutativity. We point out that the newly added points may again require closure under commutativity, so this part of the construction must be repeated. 

    First, set $X_k^0=X_k^\Diamond, R_k^0=R_k^\Diamond, E_k^0=E_k^\Diamond$. 
    Suppose $X_k^{j-1}, R_k^{j-1}, E_k^{j-1}$ have already been defined. Let ${X_k^j=X_k^{j-1}}$,  $R_k^j=R_k^{j-1}$, and $E_k^j=E_k^{j-1}$. We extend $X_k^j$, $R_k^j$, and $E_k^j$ so that commutativity is satisfied for points in $X_{j-1}$. At each stage indexed by $j$, we will ensure that an analog of \Cref{EQ-lemma} is applicable (see \Cref{lemma_lc}). 
    
    Suppose $\widehat{t}\mathrel{E_k^{j-1}}\widehat{u}$ and $\widehat{u}\mathrel{R_k^{j-1}}\widehat{w}$. We need to find some $\widehat{s}$ such that $\widehat{t}\mathrel{R_k^j}\widehat{s}$ and $\widehat{s}\mathrel{E_k^j}\widehat{w}$. In case $\widehat{u}\mathrel{E_k^{j-1}}\widehat{w}$, the choice of $\widehat{s}$ is trivial since we can simply take $\widehat{s}=\widehat{t}$. Hence, suppose $\widehat{u}\mathrel{\cancel{E_k^{j-1}}}\widehat{w}$. Then we can find a witness $\widehat{s}$ using the following lemma.

\begin{lem}\label{l-comm}
    Let $u\mathrel{Q}t$, where $t\in\textbf{smax}_R U$ for some clopen $U$. Then there is $s\in\textbf{smax}_RE[U]$ such that $u\mathrel{R}s$ and $s\mathrel{E}t$.
\end{lem}
\begin{proof}
    Since $u\mathrel{Q}t$, we have $u\mathrel{R}c$ and $c\mathrel{E}t$ for some $c\in X$. Note that $c\in E[U]$ because $t\in U$. By \Cref{fe_principle}, there is $s\in R[c]\cap\textbf{max}_RE[U]$, and so 
    $s\in\textbf{smax}_RE[U]$ by \crefdefpart{sat-smax}{smax=max}. Clearly $u\mathrel{R}s$, also $t\in\textbf{smax}_R U$ and $s\in R[c]\cap E[U]$ with $t\mathrel{E}c$ imply $s\mathrel{E}t$ by \crefdefpart{max_smax}{max_smax3}.
\end{proof}

    Since $\widehat{t}\mathrel{E_k^{j-1}}\widehat{u}$ and $\widehat{u}\mathrel{R_k^{j-1}}\widehat{w}$, \Cref{lemma_lc} for $j-1$ (or \Cref{EQ-lemma} if $j=1$) gives that $t\mathrel{E}u$ and $u\mathrel{Q}w$. Thus, $t\mathrel{Q}w$, where $w\in\textbf{smax}_RU$ for some clopen $U$ (since every selected point is strongly maximal in some clopen). Therefore, \Cref{l-comm} applies, by which there is $s\in\textbf{smax}\hspace{2pt}E[U]$ such that $t\mathrel{R}s$ and $s\mathrel{E}w$.

    If $\widehat{s}$ is not already present, add it to $X_k^\Diamond$. Also add $(\widehat{t},\widehat{s})$ to $R_k^\Diamond$ and take the reflexive-transitive closure. In addition, add $(\widehat{s},\widehat{w})$ to $E^\Diamond_k$ and generate the least equivalence relation. Repeat this process for every instance of $\widehat{t}\mathrel{E_k^\Diamond}\widehat{u}$ and $\widehat{u}\mathrel{R_k^\Diamond}\widehat{w}$ until commutativity is satisfied. While this appears as a potentially infinite process, we will see in \Cref{partial} that it terminates. 

    \begin{lem}{\label{lemma_lc}}
    For $\widehat{u},\widehat{w}\in X_k^j$,
    \begin{enumerate}[label={(\arabic*)}]
        \item $\widehat{u}\mathrel{E_k^j}\widehat{w}$ iff $u\mathrel{E}w$. \label{E_lc}

        \item $\widehat{u}\mathrel{R_k^j}\widehat{w}$ implies $u\mathrel{Q}w$. \label{Q_lc}
    \end{enumerate}
    \end{lem}
    \begin{proof}
        \labelcref{E_lc} The implication $\widehat{u}\mathrel{E_k^j}\widehat{w}\implies u\mathrel{E}w$ is clear since we introduce $E_k^j$-relations between points only if their original copies are $E$-related. Conversely, suppose $u\mathrel{E}w$. Since $\widehat{u},\widehat{w}\in X_k^j$, 
        there are $\widehat{t}, \widehat{s}\in X_k^{j-1}$ such that $\widehat{w}\mathrel{E_k^j}\widehat{t}$ and $\widehat{u}\mathrel{E_k^j}\widehat{s}$ (because each point that may be introduced by left commutativity must involve creating an $E_k^j$-relation). Thus, by the forward implication, 
        $w\mathrel{E}t$ and $u\mathrel{E}s$, and hence {$t\mathrel{E}s$}. By \Cref{lemma_lc} for $j-1$ (or \Cref{EQ-lemma} when $j=1$), we deduce that {$\widehat{t}\mathrel{E_k^{j-1}}\widehat{s}$}, and so $\widehat{t}\mathrel{E_k^j}\widehat{s}$. 
        Thus, $\widehat{u}\mathrel{E_k^j}\widehat{w}$.

        \labelcref{Q_lc} 
        This is immediate since a $R_k^j$-relation is introduced between points only if their original copies are $Q$-related (in fact, the $R_k^j$-relations introduced in the commutativity step correspond to $R$-relations in the original frame).
    \end{proof}

    Set $X_k=\bigcup_{j<\omega}X_k^j, R_k=\bigcup_{j<\omega}R_k^j, E_k=\bigcup_{j<\omega}E_k^j$. We have thus constructed 
    $\mathfrak{F}_k=(X_k,R_k,E_k)$.

\textbf{Note:} The $R_k$-arrows introduced in each step of the construction correspond to the following arrows in the original frame:
\begin{center}
    \begin{tabular}{|l|l|}
    \hline 
    \textbf{Horizontal Step}     &  $R$ \\
    \hline
    \textbf{Vertical Step}     &  $Q$ \\
    \hline
    \textbf{Commutativity Step} & $R$ \\
    \hline
    \end{tabular}
\end{center}

The construction can be summarized pictorially as follows, where circles and arrows indicate the relation $R_k$, and $E_k$-clusters are shown in blue. The newly added relations in each step are highlighted in red, while those retained from previous stages are in black and gray. 

\begin{center}
\tikzset{every picture/.style={line width=0.75pt}} 

\begin{tikzpicture}[x=0.75pt,y=0.75pt,yscale=-1,xscale=1]

\draw  [color={rgb, 255:red, 74; green, 144; blue, 226 }  ,draw opacity=1 ] (40.22,121.81) .. controls (43.02,121.81) and (45.28,124.08) .. (45.28,126.87) -- (45.28,154.61) .. controls (45.28,157.4) and (43.02,159.67) .. (40.22,159.67) -- (25.06,159.67) .. controls (22.26,159.67) and (20,157.4) .. (20,154.61) -- (20,126.87) .. controls (20,124.08) and (22.26,121.81) .. (25.06,121.81) -- cycle ;
\draw [color={rgb, 255:red, 155; green, 155; blue, 155 }  ,draw opacity=1 ]   (35.55,140.74) -- (74.16,164.25) ;
\draw [shift={(75.87,165.29)}, rotate = 211.34] [color={rgb, 255:red, 155; green, 155; blue, 155 }  ,draw opacity=1 ][line width=0.75]    (4.37,-1.96) .. controls (2.78,-0.92) and (1.32,-0.27) .. (0,0) .. controls (1.32,0.27) and (2.78,0.92) .. (4.37,1.96)   ;
\draw  [color={rgb, 255:red, 74; green, 144; blue, 226 }  ,draw opacity=1 ] (86.11,150.15) .. controls (88.9,150.15) and (91.16,152.41) .. (91.16,155.21) -- (91.16,200.14) .. controls (91.16,202.94) and (88.9,205.2) .. (86.11,205.2) -- (70.94,205.2) .. controls (68.15,205.2) and (65.88,202.94) .. (65.88,200.14) -- (65.88,155.21) .. controls (65.88,152.41) and (68.15,150.15) .. (70.94,150.15) -- cycle ;
\draw [color={rgb, 255:red, 155; green, 155; blue, 155 }  ,draw opacity=1 ]   (35.55,140.74) -- (75.59,107.62) ;
\draw [shift={(77.13,106.35)}, rotate = 140.41] [color={rgb, 255:red, 155; green, 155; blue, 155 }  ,draw opacity=1 ][line width=0.75]    (4.37,-1.96) .. controls (2.78,-0.92) and (1.32,-0.27) .. (0,0) .. controls (1.32,0.27) and (2.78,0.92) .. (4.37,1.96)   ;
\draw  [color={rgb, 255:red, 74; green, 144; blue, 226 }  ,draw opacity=1 ] (86.87,71.2) .. controls (89.66,71.2) and (91.92,73.46) .. (91.92,76.26) -- (91.92,115.35) .. controls (91.92,118.15) and (89.66,120.41) .. (86.87,120.41) -- (71.7,120.41) .. controls (68.91,120.41) and (66.64,118.15) .. (66.64,115.35) -- (66.64,76.26) .. controls (66.64,73.46) and (68.91,71.2) .. (71.7,71.2) -- cycle ;
\draw   (29.73,140.74) .. controls (29.73,139.37) and (31.03,138.25) .. (32.64,138.25) .. controls (34.25,138.25) and (35.55,139.37) .. (35.55,140.74) .. controls (35.55,142.12) and (34.25,143.23) .. (32.64,143.23) .. controls (31.03,143.23) and (29.73,142.12) .. (29.73,140.74) -- cycle ;
\draw   (75.87,165.29) .. controls (75.87,163.92) and (77.17,162.8) .. (78.78,162.8) .. controls (80.38,162.8) and (81.68,163.92) .. (81.68,165.29) .. controls (81.68,166.67) and (80.38,167.78) .. (78.78,167.78) .. controls (77.17,167.78) and (75.87,166.67) .. (75.87,165.29) -- cycle ;
\draw   (77.13,106.35) .. controls (77.13,104.98) and (78.44,103.86) .. (80.04,103.86) .. controls (81.65,103.86) and (82.95,104.98) .. (82.95,106.35) .. controls (82.95,107.72) and (81.65,108.84) .. (80.04,108.84) .. controls (78.44,108.84) and (77.13,107.72) .. (77.13,106.35) -- cycle ;
\draw  [color={rgb, 255:red, 74; green, 144; blue, 226 }  ,draw opacity=1 ] (140.08,121.81) .. controls (142.88,121.81) and (145.14,124.08) .. (145.14,126.87) -- (145.14,154.61) .. controls (145.14,157.4) and (142.88,159.67) .. (140.08,159.67) -- (124.91,159.67) .. controls (122.12,159.67) and (119.86,157.4) .. (119.86,154.61) -- (119.86,126.87) .. controls (119.86,124.08) and (122.12,121.81) .. (124.91,121.81) -- cycle ;
\draw [color={rgb, 255:red, 155; green, 155; blue, 155 }  ,draw opacity=1 ]   (135.41,140.74) -- (174.02,164.25) ;
\draw [shift={(175.73,165.29)}, rotate = 211.34] [color={rgb, 255:red, 155; green, 155; blue, 155 }  ,draw opacity=1 ][line width=0.75]    (4.37,-1.96) .. controls (2.78,-0.92) and (1.32,-0.27) .. (0,0) .. controls (1.32,0.27) and (2.78,0.92) .. (4.37,1.96)   ;
\draw  [color={rgb, 255:red, 74; green, 144; blue, 226 }  ,draw opacity=1 ] (185.97,150.15) .. controls (188.76,150.15) and (191.02,152.41) .. (191.02,155.21) -- (191.02,200.14) .. controls (191.02,202.94) and (188.76,205.2) .. (185.97,205.2) -- (170.8,205.2) .. controls (168.01,205.2) and (165.74,202.94) .. (165.74,200.14) -- (165.74,155.21) .. controls (165.74,152.41) and (168.01,150.15) .. (170.8,150.15) -- cycle ;
\draw [color={rgb, 255:red, 155; green, 155; blue, 155 }  ,draw opacity=1 ]   (135.41,140.74) -- (175.45,107.62) ;
\draw [shift={(176.99,106.35)}, rotate = 140.41] [color={rgb, 255:red, 155; green, 155; blue, 155 }  ,draw opacity=1 ][line width=0.75]    (4.37,-1.96) .. controls (2.78,-0.92) and (1.32,-0.27) .. (0,0) .. controls (1.32,0.27) and (2.78,0.92) .. (4.37,1.96)   ;
\draw  [color={rgb, 255:red, 74; green, 144; blue, 226 }  ,draw opacity=1 ] (186.73,71.2) .. controls (189.52,71.2) and (191.78,73.46) .. (191.78,76.26) -- (191.78,115.35) .. controls (191.78,118.15) and (189.52,120.41) .. (186.73,120.41) -- (171.56,120.41) .. controls (168.76,120.41) and (166.5,118.15) .. (166.5,115.35) -- (166.5,76.26) .. controls (166.5,73.46) and (168.76,71.2) .. (171.56,71.2) -- cycle ;
\draw   (129.59,140.74) .. controls (129.59,139.37) and (130.89,138.25) .. (132.5,138.25) .. controls (134.1,138.25) and (135.41,139.37) .. (135.41,140.74) .. controls (135.41,142.12) and (134.1,143.23) .. (132.5,143.23) .. controls (130.89,143.23) and (129.59,142.12) .. (129.59,140.74) -- cycle ;
\draw   (175.73,165.29) .. controls (175.73,163.92) and (177.03,162.8) .. (178.64,162.8) .. controls (180.24,162.8) and (181.54,163.92) .. (181.54,165.29) .. controls (181.54,166.67) and (180.24,167.78) .. (178.64,167.78) .. controls (177.03,167.78) and (175.73,166.67) .. (175.73,165.29) -- cycle ;
\draw   (176.99,106.35) .. controls (176.99,104.98) and (178.29,103.86) .. (179.9,103.86) .. controls (181.51,103.86) and (182.81,104.98) .. (182.81,106.35) .. controls (182.81,107.72) and (181.51,108.84) .. (179.9,108.84) .. controls (178.29,108.84) and (176.99,107.72) .. (176.99,106.35) -- cycle ;
\draw  [color={rgb, 255:red, 208; green, 2; blue, 27 }  ,draw opacity=1 ] (173.33,95.8) .. controls (173.33,94.43) and (174.63,93.32) .. (176.23,93.32) .. controls (177.84,93.32) and (179.14,94.43) .. (179.14,95.8) .. controls (179.14,97.18) and (177.84,98.29) .. (176.23,98.29) .. controls (174.63,98.29) and (173.33,97.18) .. (173.33,95.8) -- cycle ;
\draw  [color={rgb, 255:red, 208; green, 2; blue, 27 }  ,draw opacity=1 ] (170.04,182.6) .. controls (170.04,181.22) and (171.34,180.11) .. (172.95,180.11) .. controls (174.55,180.11) and (175.85,181.22) .. (175.85,182.6) .. controls (175.85,183.97) and (174.55,185.08) .. (172.95,185.08) .. controls (171.34,185.08) and (170.04,183.97) .. (170.04,182.6) -- cycle ;
\draw  [color={rgb, 255:red, 208; green, 2; blue, 27 }  ,draw opacity=1 ] (180.78,155.56) .. controls (180.78,154.18) and (182.09,153.07) .. (183.69,153.07) .. controls (185.3,153.07) and (186.6,154.18) .. (186.6,155.56) .. controls (186.6,156.93) and (185.3,158.05) .. (183.69,158.05) .. controls (182.09,158.05) and (180.78,156.93) .. (180.78,155.56) -- cycle ;
\draw  [color={rgb, 255:red, 74; green, 144; blue, 226 }  ,draw opacity=1 ] (243.1,121.81) .. controls (245.89,121.81) and (248.16,124.08) .. (248.16,126.87) -- (248.16,154.61) .. controls (248.16,157.4) and (245.89,159.67) .. (243.1,159.67) -- (227.93,159.67) .. controls (225.14,159.67) and (222.88,157.4) .. (222.88,154.61) -- (222.88,126.87) .. controls (222.88,124.08) and (225.14,121.81) .. (227.93,121.81) -- cycle ;
\draw [color={rgb, 255:red, 155; green, 155; blue, 155 }  ,draw opacity=1 ]   (238.42,140.74) -- (277.04,164.25) ;
\draw [shift={(278.75,165.29)}, rotate = 211.34] [color={rgb, 255:red, 155; green, 155; blue, 155 }  ,draw opacity=1 ][line width=0.75]    (4.37,-1.96) .. controls (2.78,-0.92) and (1.32,-0.27) .. (0,0) .. controls (1.32,0.27) and (2.78,0.92) .. (4.37,1.96)   ;
\draw  [color={rgb, 255:red, 74; green, 144; blue, 226 }  ,draw opacity=1 ] (288.99,150.15) .. controls (291.78,150.15) and (294.04,152.41) .. (294.04,155.21) -- (294.04,200.14) .. controls (294.04,202.94) and (291.78,205.2) .. (288.99,205.2) -- (273.82,205.2) .. controls (271.02,205.2) and (268.76,202.94) .. (268.76,200.14) -- (268.76,155.21) .. controls (268.76,152.41) and (271.02,150.15) .. (273.82,150.15) -- cycle ;
\draw [color={rgb, 255:red, 155; green, 155; blue, 155 }  ,draw opacity=1 ]   (238.42,140.74) -- (278.47,107.62) ;
\draw [shift={(280.01,106.35)}, rotate = 140.41] [color={rgb, 255:red, 155; green, 155; blue, 155 }  ,draw opacity=1 ][line width=0.75]    (4.37,-1.96) .. controls (2.78,-0.92) and (1.32,-0.27) .. (0,0) .. controls (1.32,0.27) and (2.78,0.92) .. (4.37,1.96)   ;
\draw  [color={rgb, 255:red, 74; green, 144; blue, 226 }  ,draw opacity=1 ] (289.74,71.2) .. controls (292.54,71.2) and (294.8,73.46) .. (294.8,76.26) -- (294.8,115.35) .. controls (294.8,118.15) and (292.54,120.41) .. (289.74,120.41) -- (274.58,120.41) .. controls (271.78,120.41) and (269.52,118.15) .. (269.52,115.35) -- (269.52,76.26) .. controls (269.52,73.46) and (271.78,71.2) .. (274.58,71.2) -- cycle ;
\draw   (232.61,140.74) .. controls (232.61,139.37) and (233.91,138.25) .. (235.52,138.25) .. controls (237.12,138.25) and (238.42,139.37) .. (238.42,140.74) .. controls (238.42,142.12) and (237.12,143.23) .. (235.52,143.23) .. controls (233.91,143.23) and (232.61,142.12) .. (232.61,140.74) -- cycle ;
\draw   (278.75,165.29) .. controls (278.75,163.92) and (280.05,162.8) .. (281.65,162.8) .. controls (283.26,162.8) and (284.56,163.92) .. (284.56,165.29) .. controls (284.56,166.67) and (283.26,167.78) .. (281.65,167.78) .. controls (280.05,167.78) and (278.75,166.67) .. (278.75,165.29) -- cycle ;
\draw   (280.01,106.35) .. controls (280.01,104.98) and (281.31,103.86) .. (282.92,103.86) .. controls (284.52,103.86) and (285.83,104.98) .. (285.83,106.35) .. controls (285.83,107.72) and (284.52,108.84) .. (282.92,108.84) .. controls (281.31,108.84) and (280.01,107.72) .. (280.01,106.35) -- cycle ;
\draw   (276.35,95.8) .. controls (276.35,94.43) and (277.65,93.32) .. (279.25,93.32) .. controls (280.86,93.32) and (282.16,94.43) .. (282.16,95.8) .. controls (282.16,97.18) and (280.86,98.29) .. (279.25,98.29) .. controls (277.65,98.29) and (276.35,97.18) .. (276.35,95.8) -- cycle ;
\draw  [color={rgb, 255:red, 0; green, 0; blue, 0 }  ,draw opacity=1 ] (273.06,182.6) .. controls (273.06,181.22) and (274.36,180.11) .. (275.97,180.11) .. controls (277.57,180.11) and (278.87,181.22) .. (278.87,182.6) .. controls (278.87,183.97) and (277.57,185.08) .. (275.97,185.08) .. controls (274.36,185.08) and (273.06,183.97) .. (273.06,182.6) -- cycle ;
\draw   (283.8,155.56) .. controls (283.8,154.18) and (285.1,153.07) .. (286.71,153.07) .. controls (288.32,153.07) and (289.62,154.18) .. (289.62,155.56) .. controls (289.62,156.93) and (288.32,158.05) .. (286.71,158.05) .. controls (285.1,158.05) and (283.8,156.93) .. (283.8,155.56) -- cycle ;
\draw [color={rgb, 255:red, 208; green, 2; blue, 27 }  ,draw opacity=1 ]   (279.25,93.32) -- (287.55,82.06) ;
\draw [shift={(288.73,80.45)}, rotate = 126.38] [color={rgb, 255:red, 208; green, 2; blue, 27 }  ,draw opacity=1 ][line width=0.75]    (4.37,-1.32) .. controls (2.78,-0.56) and (1.32,-0.12) .. (0,0) .. controls (1.32,0.12) and (2.78,0.56) .. (4.37,1.32)   ;
\draw  [color={rgb, 255:red, 208; green, 2; blue, 27 }  ,draw opacity=1 ] (285.83,77.96) .. controls (285.83,76.59) and (287.13,75.47) .. (288.73,75.47) .. controls (290.34,75.47) and (291.64,76.59) .. (291.64,77.96) .. controls (291.64,79.33) and (290.34,80.45) .. (288.73,80.45) .. controls (287.13,80.45) and (285.83,79.33) .. (285.83,77.96) -- cycle ;
\draw [color={rgb, 255:red, 208; green, 2; blue, 27 }  ,draw opacity=1 ]   (281.65,167.78) -- (286.42,185.18) ;
\draw [shift={(286.95,187.11)}, rotate = 254.68] [color={rgb, 255:red, 208; green, 2; blue, 27 }  ,draw opacity=1 ][line width=0.75]    (4.37,-1.32) .. controls (2.78,-0.56) and (1.32,-0.12) .. (0,0) .. controls (1.32,0.12) and (2.78,0.56) .. (4.37,1.32)   ;
\draw  [color={rgb, 255:red, 74; green, 144; blue, 226 }  ,draw opacity=1 ] (346.1,121.81) .. controls (348.89,121.81) and (351.16,124.08) .. (351.16,126.87) -- (351.16,154.61) .. controls (351.16,157.4) and (348.89,159.67) .. (346.1,159.67) -- (330.93,159.67) .. controls (328.14,159.67) and (325.88,157.4) .. (325.88,154.61) -- (325.88,126.87) .. controls (325.88,124.08) and (328.14,121.81) .. (330.93,121.81) -- cycle ;
\draw [color={rgb, 255:red, 155; green, 155; blue, 155 }  ,draw opacity=1 ]   (341.42,140.74) -- (380.04,164.25) ;
\draw [shift={(381.75,165.29)}, rotate = 211.34] [color={rgb, 255:red, 155; green, 155; blue, 155 }  ,draw opacity=1 ][line width=0.75]    (4.37,-1.96) .. controls (2.78,-0.92) and (1.32,-0.27) .. (0,0) .. controls (1.32,0.27) and (2.78,0.92) .. (4.37,1.96)   ;
\draw  [color={rgb, 255:red, 74; green, 144; blue, 226 }  ,draw opacity=1 ] (391.99,150.15) .. controls (394.78,150.15) and (397.04,152.41) .. (397.04,155.21) -- (397.04,200.14) .. controls (397.04,202.94) and (394.78,205.2) .. (391.99,205.2) -- (376.82,205.2) .. controls (374.02,205.2) and (371.76,202.94) .. (371.76,200.14) -- (371.76,155.21) .. controls (371.76,152.41) and (374.02,150.15) .. (376.82,150.15) -- cycle ;
\draw [color={rgb, 255:red, 155; green, 155; blue, 155 }  ,draw opacity=1 ]   (341.42,140.74) -- (381.47,107.62) ;
\draw [shift={(383.01,106.35)}, rotate = 140.41] [color={rgb, 255:red, 155; green, 155; blue, 155 }  ,draw opacity=1 ][line width=0.75]    (4.37,-1.96) .. controls (2.78,-0.92) and (1.32,-0.27) .. (0,0) .. controls (1.32,0.27) and (2.78,0.92) .. (4.37,1.96)   ;
\draw  [color={rgb, 255:red, 74; green, 144; blue, 226 }  ,draw opacity=1 ] (392.74,71.2) .. controls (395.54,71.2) and (397.8,73.46) .. (397.8,76.26) -- (397.8,115.35) .. controls (397.8,118.15) and (395.54,120.41) .. (392.74,120.41) -- (377.58,120.41) .. controls (374.78,120.41) and (372.52,118.15) .. (372.52,115.35) -- (372.52,76.26) .. controls (372.52,73.46) and (374.78,71.2) .. (377.58,71.2) -- cycle ;
\draw   (335.61,140.74) .. controls (335.61,139.37) and (336.91,138.25) .. (338.52,138.25) .. controls (340.12,138.25) and (341.42,139.37) .. (341.42,140.74) .. controls (341.42,142.12) and (340.12,143.23) .. (338.52,143.23) .. controls (336.91,143.23) and (335.61,142.12) .. (335.61,140.74) -- cycle ;
\draw   (383.01,106.35) .. controls (383.01,104.98) and (384.31,103.86) .. (385.92,103.86) .. controls (387.52,103.86) and (388.83,104.98) .. (388.83,106.35) .. controls (388.83,107.72) and (387.52,108.84) .. (385.92,108.84) .. controls (384.31,108.84) and (383.01,107.72) .. (383.01,106.35) -- cycle ;
\draw   (379.35,95.8) .. controls (379.35,94.43) and (380.65,93.32) .. (382.25,93.32) .. controls (383.86,93.32) and (385.16,94.43) .. (385.16,95.8) .. controls (385.16,97.18) and (383.86,98.29) .. (382.25,98.29) .. controls (380.65,98.29) and (379.35,97.18) .. (379.35,95.8) -- cycle ;
\draw   (376.06,182.6) .. controls (376.06,181.22) and (377.36,180.11) .. (378.97,180.11) .. controls (380.57,180.11) and (381.87,181.22) .. (381.87,182.6) .. controls (381.87,183.97) and (380.57,185.08) .. (378.97,185.08) .. controls (377.36,185.08) and (376.06,183.97) .. (376.06,182.6) -- cycle ;
\draw   (386.8,155.56) .. controls (386.8,154.18) and (388.1,153.07) .. (389.71,153.07) .. controls (391.32,153.07) and (392.62,154.18) .. (392.62,155.56) .. controls (392.62,156.93) and (391.32,158.05) .. (389.71,158.05) .. controls (388.1,158.05) and (386.8,156.93) .. (386.8,155.56) -- cycle ;
\draw [color={rgb, 255:red, 155; green, 155; blue, 155 }  ,draw opacity=1 ]   (382.25,93.32) -- (390.55,82.06) ;
\draw [shift={(391.73,80.45)}, rotate = 126.38] [color={rgb, 255:red, 155; green, 155; blue, 155 }  ,draw opacity=1 ][line width=0.75]    (4.37,-1.32) .. controls (2.78,-0.56) and (1.32,-0.12) .. (0,0) .. controls (1.32,0.12) and (2.78,0.56) .. (4.37,1.32)   ;
\draw   (388.83,77.96) .. controls (388.83,76.59) and (390.13,75.47) .. (391.73,75.47) .. controls (393.34,75.47) and (394.64,76.59) .. (394.64,77.96) .. controls (394.64,79.33) and (393.34,80.45) .. (391.73,80.45) .. controls (390.13,80.45) and (388.83,79.33) .. (388.83,77.96) -- cycle ;
\draw  [color={rgb, 255:red, 74; green, 144; blue, 226 }  ,draw opacity=1 ] (437.87,70.2) .. controls (440.66,70.2) and (442.92,72.46) .. (442.92,75.26) -- (442.92,114.35) .. controls (442.92,117.15) and (440.66,119.41) .. (437.87,119.41) -- (422.7,119.41) .. controls (419.91,119.41) and (417.64,117.15) .. (417.64,114.35) -- (417.64,75.26) .. controls (417.64,72.46) and (419.91,70.2) .. (422.7,70.2) -- cycle ;
\draw  [color={rgb, 255:red, 208; green, 2; blue, 27 }  ,draw opacity=1 ] (428.13,105.35) .. controls (428.13,103.98) and (429.44,102.86) .. (431.04,102.86) .. controls (432.65,102.86) and (433.95,103.98) .. (433.95,105.35) .. controls (433.95,106.72) and (432.65,107.84) .. (431.04,107.84) .. controls (429.44,107.84) and (428.13,106.72) .. (428.13,105.35) -- cycle ;
\draw [color={rgb, 255:red, 208; green, 2; blue, 27 }  ,draw opacity=1 ]   (385.16,95.8) -- (426.18,104.92) ;
\draw [shift={(428.13,105.35)}, rotate = 192.52] [color={rgb, 255:red, 208; green, 2; blue, 27 }  ,draw opacity=1 ][line width=0.75]    (4.37,-1.32) .. controls (2.78,-0.56) and (1.32,-0.12) .. (0,0) .. controls (1.32,0.12) and (2.78,0.56) .. (4.37,1.32)   ;
\draw [color={rgb, 255:red, 155; green, 155; blue, 155 }  ,draw opacity=1 ] [dash pattern={on 0.75pt off 0.75pt}]  (74,62.8) .. controls (75.67,61.13) and (77.33,61.13) .. (79,62.8) .. controls (80.67,64.47) and (82.33,64.47) .. (84,62.8) .. controls (85.67,61.13) and (87.33,61.13) .. (89,62.8) .. controls (90.67,64.47) and (92.33,64.47) .. (94,62.8) .. controls (95.67,61.13) and (97.33,61.13) .. (99,62.8) .. controls (100.67,64.47) and (102.33,64.47) .. (104,62.8) .. controls (105.67,61.13) and (107.33,61.13) .. (109,62.8) .. controls (110.67,64.47) and (112.33,64.47) .. (114,62.8) .. controls (115.67,61.13) and (117.33,61.13) .. (119,62.8) .. controls (120.67,64.47) and (122.33,64.47) .. (124,62.8) .. controls (125.67,61.13) and (127.33,61.13) .. (129,62.8) .. controls (130.67,64.47) and (132.33,64.47) .. (134,62.8) .. controls (135.67,61.13) and (137.33,61.13) .. (139,62.8) .. controls (140.67,64.47) and (142.33,64.47) .. (144,62.8) .. controls (145.67,61.13) and (147.33,61.13) .. (149,62.8) .. controls (150.67,64.47) and (152.33,64.47) .. (154,62.8) .. controls (155.67,61.13) and (157.33,61.13) .. (159,62.8) .. controls (160.67,64.47) and (162.33,64.47) .. (164,62.8) .. controls (165.67,61.13) and (167.33,61.13) .. (169,62.8) -- (177,62.8) ;
\draw [shift={(179,62.8)}, rotate = 180] [color={rgb, 255:red, 155; green, 155; blue, 155 }  ,draw opacity=1 ][line width=0.75]    (10.93,-3.29) .. controls (6.95,-1.4) and (3.31,-0.3) .. (0,0) .. controls (3.31,0.3) and (6.95,1.4) .. (10.93,3.29)   ;
\draw [color={rgb, 255:red, 155; green, 155; blue, 155 }  ,draw opacity=1 ] [dash pattern={on 0.75pt off 0.75pt}]  (179,62.8) .. controls (180.65,61.12) and (182.32,61.11) .. (184,62.76) .. controls (185.68,64.41) and (187.35,64.4) .. (189,62.72) .. controls (190.65,61.04) and (192.32,61.03) .. (194,62.68) .. controls (195.67,64.33) and (197.34,64.32) .. (199,62.65) .. controls (200.65,60.97) and (202.32,60.96) .. (204,62.61) .. controls (205.68,64.26) and (207.35,64.25) .. (209,62.57) .. controls (210.65,60.89) and (212.32,60.88) .. (214,62.53) .. controls (215.68,64.18) and (217.35,64.17) .. (219,62.49) .. controls (220.65,60.81) and (222.32,60.8) .. (224,62.45) .. controls (225.67,64.1) and (227.34,64.09) .. (229,62.42) .. controls (230.65,60.74) and (232.32,60.73) .. (234,62.38) .. controls (235.68,64.03) and (237.35,64.02) .. (239,62.34) .. controls (240.65,60.66) and (242.32,60.65) .. (244,62.3) .. controls (245.68,63.95) and (247.35,63.94) .. (249,62.26) .. controls (250.65,60.58) and (252.32,60.57) .. (254,62.22) .. controls (255.68,63.87) and (257.35,63.86) .. (259,62.18) .. controls (260.66,60.51) and (262.33,60.5) .. (264,62.15) .. controls (265.68,63.8) and (267.35,63.79) .. (269,62.11) -- (273,62.08) -- (281,62.02) ;
\draw [shift={(283,62)}, rotate = 179.56] [color={rgb, 255:red, 155; green, 155; blue, 155 }  ,draw opacity=1 ][line width=0.75]    (10.93,-3.29) .. controls (6.95,-1.4) and (3.31,-0.3) .. (0,0) .. controls (3.31,0.3) and (6.95,1.4) .. (10.93,3.29)   ;
\draw [color={rgb, 255:red, 155; green, 155; blue, 155 }  ,draw opacity=1 ] [dash pattern={on 0.75pt off 0.75pt}]  (283,62) .. controls (284.67,60.35) and (286.34,60.36) .. (288,62.03) .. controls (289.66,63.7) and (291.33,63.71) .. (293,62.06) .. controls (294.67,60.4) and (296.34,60.41) .. (298,62.08) .. controls (299.66,63.75) and (301.33,63.76) .. (303,62.11) .. controls (304.67,60.46) and (306.34,60.47) .. (308,62.14) .. controls (309.66,63.81) and (311.33,63.82) .. (313,62.17) .. controls (314.67,60.52) and (316.34,60.53) .. (318,62.2) .. controls (319.66,63.87) and (321.33,63.88) .. (323,62.23) .. controls (324.67,60.57) and (326.34,60.58) .. (328,62.25) .. controls (329.66,63.92) and (331.33,63.93) .. (333,62.28) .. controls (334.67,60.63) and (336.34,60.64) .. (338,62.31) .. controls (339.66,63.98) and (341.33,63.99) .. (343,62.34) .. controls (344.67,60.69) and (346.34,60.7) .. (348,62.37) .. controls (349.66,64.04) and (351.33,64.05) .. (353,62.4) .. controls (354.67,60.74) and (356.34,60.75) .. (358,62.42) .. controls (359.66,64.09) and (361.33,64.1) .. (363,62.45) .. controls (364.67,60.8) and (366.34,60.81) .. (368,62.48) .. controls (369.66,64.15) and (371.33,64.16) .. (373,62.51) .. controls (374.67,60.86) and (376.34,60.87) .. (378,62.54) -- (379,62.54) -- (387,62.59) ;
\draw [shift={(389,62.6)}, rotate = 180.32] [color={rgb, 255:red, 155; green, 155; blue, 155 }  ,draw opacity=1 ][line width=0.75]    (10.93,-3.29) .. controls (6.95,-1.4) and (3.31,-0.3) .. (0,0) .. controls (3.31,0.3) and (6.95,1.4) .. (10.93,3.29)   ;
\draw  [color={rgb, 255:red, 155; green, 155; blue, 155 }  ,draw opacity=1 ] (178,212) .. controls (177.99,216.67) and (180.32,219) .. (184.99,219.01) -- (270.49,219.18) .. controls (277.16,219.19) and (280.48,221.53) .. (280.47,226.2) .. controls (280.48,221.53) and (283.82,219.21) .. (290.49,219.22)(287.49,219.21) -- (375.99,219.38) .. controls (380.66,219.39) and (382.99,217.06) .. (383,212.39) ;
\draw  [color={rgb, 255:red, 208; green, 2; blue, 27 }  ,draw opacity=1 ] (284.04,189.6) .. controls (284.04,188.22) and (285.34,187.11) .. (286.95,187.11) .. controls (288.55,187.11) and (289.85,188.22) .. (289.85,189.6) .. controls (289.85,190.97) and (288.55,192.08) .. (286.95,192.08) .. controls (285.34,192.08) and (284.04,190.97) .. (284.04,189.6) -- cycle ;
\draw   (381.75,165.29) .. controls (381.75,163.92) and (383.05,162.8) .. (384.65,162.8) .. controls (386.26,162.8) and (387.56,163.92) .. (387.56,165.29) .. controls (387.56,166.67) and (386.26,167.78) .. (384.65,167.78) .. controls (383.05,167.78) and (381.75,166.67) .. (381.75,165.29) -- cycle ;
\draw [color={rgb, 255:red, 155; green, 155; blue, 155 }  ,draw opacity=1 ]   (384.65,167.78) -- (389.42,185.18) ;
\draw [shift={(389.95,187.11)}, rotate = 254.68] [color={rgb, 255:red, 155; green, 155; blue, 155 }  ,draw opacity=1 ][line width=0.75]    (4.37,-1.32) .. controls (2.78,-0.56) and (1.32,-0.12) .. (0,0) .. controls (1.32,0.12) and (2.78,0.56) .. (4.37,1.32)   ;
\draw  [color={rgb, 255:red, 0; green, 0; blue, 0 }  ,draw opacity=1 ] (387.04,189.6) .. controls (387.04,188.22) and (388.34,187.11) .. (389.95,187.11) .. controls (391.55,187.11) and (392.85,188.22) .. (392.85,189.6) .. controls (392.85,190.97) and (391.55,192.08) .. (389.95,192.08) .. controls (388.34,192.08) and (387.04,190.97) .. (387.04,189.6) -- cycle ;
\draw  [color={rgb, 255:red, 74; green, 144; blue, 226 }  ,draw opacity=1 ] (491.1,122.81) .. controls (493.89,122.81) and (496.16,125.08) .. (496.16,127.87) -- (496.16,155.61) .. controls (496.16,158.4) and (493.89,160.67) .. (491.1,160.67) -- (475.93,160.67) .. controls (473.14,160.67) and (470.88,158.4) .. (470.88,155.61) -- (470.88,127.87) .. controls (470.88,125.08) and (473.14,122.81) .. (475.93,122.81) -- cycle ;
\draw [color={rgb, 255:red, 155; green, 155; blue, 155 }  ,draw opacity=1 ]   (486.42,141.74) -- (525.04,165.25) ;
\draw [shift={(526.75,166.29)}, rotate = 211.34] [color={rgb, 255:red, 155; green, 155; blue, 155 }  ,draw opacity=1 ][line width=0.75]    (4.37,-1.96) .. controls (2.78,-0.92) and (1.32,-0.27) .. (0,0) .. controls (1.32,0.27) and (2.78,0.92) .. (4.37,1.96)   ;
\draw  [color={rgb, 255:red, 74; green, 144; blue, 226 }  ,draw opacity=1 ] (536.99,151.15) .. controls (539.78,151.15) and (542.04,153.41) .. (542.04,156.21) -- (542.04,201.14) .. controls (542.04,203.94) and (539.78,206.2) .. (536.99,206.2) -- (521.82,206.2) .. controls (519.02,206.2) and (516.76,203.94) .. (516.76,201.14) -- (516.76,156.21) .. controls (516.76,153.41) and (519.02,151.15) .. (521.82,151.15) -- cycle ;
\draw [color={rgb, 255:red, 155; green, 155; blue, 155 }  ,draw opacity=1 ]   (486.42,141.74) -- (526.47,108.62) ;
\draw [shift={(528.01,107.35)}, rotate = 140.41] [color={rgb, 255:red, 155; green, 155; blue, 155 }  ,draw opacity=1 ][line width=0.75]    (4.37,-1.96) .. controls (2.78,-0.92) and (1.32,-0.27) .. (0,0) .. controls (1.32,0.27) and (2.78,0.92) .. (4.37,1.96)   ;
\draw  [color={rgb, 255:red, 74; green, 144; blue, 226 }  ,draw opacity=1 ] (537.74,72.2) .. controls (540.54,72.2) and (542.8,74.46) .. (542.8,77.26) -- (542.8,116.35) .. controls (542.8,119.15) and (540.54,121.41) .. (537.74,121.41) -- (522.58,121.41) .. controls (519.78,121.41) and (517.52,119.15) .. (517.52,116.35) -- (517.52,77.26) .. controls (517.52,74.46) and (519.78,72.2) .. (522.58,72.2) -- cycle ;
\draw   (480.61,141.74) .. controls (480.61,140.37) and (481.91,139.25) .. (483.52,139.25) .. controls (485.12,139.25) and (486.42,140.37) .. (486.42,141.74) .. controls (486.42,143.12) and (485.12,144.23) .. (483.52,144.23) .. controls (481.91,144.23) and (480.61,143.12) .. (480.61,141.74) -- cycle ;
\draw   (526.75,166.29) .. controls (526.75,164.92) and (528.05,163.8) .. (529.65,163.8) .. controls (531.26,163.8) and (532.56,164.92) .. (532.56,166.29) .. controls (532.56,167.67) and (531.26,168.78) .. (529.65,168.78) .. controls (528.05,168.78) and (526.75,167.67) .. (526.75,166.29) -- cycle ;
\draw   (528.01,107.35) .. controls (528.01,105.98) and (529.31,104.86) .. (530.92,104.86) .. controls (532.52,104.86) and (533.83,105.98) .. (533.83,107.35) .. controls (533.83,108.72) and (532.52,109.84) .. (530.92,109.84) .. controls (529.31,109.84) and (528.01,108.72) .. (528.01,107.35) -- cycle ;
\draw   (524.35,96.8) .. controls (524.35,95.43) and (525.65,94.32) .. (527.25,94.32) .. controls (528.86,94.32) and (530.16,95.43) .. (530.16,96.8) .. controls (530.16,98.18) and (528.86,99.29) .. (527.25,99.29) .. controls (525.65,99.29) and (524.35,98.18) .. (524.35,96.8) -- cycle ;
\draw   (521.06,183.6) .. controls (521.06,182.22) and (522.36,181.11) .. (523.97,181.11) .. controls (525.57,181.11) and (526.87,182.22) .. (526.87,183.6) .. controls (526.87,184.97) and (525.57,186.08) .. (523.97,186.08) .. controls (522.36,186.08) and (521.06,184.97) .. (521.06,183.6) -- cycle ;
\draw   (531.8,156.56) .. controls (531.8,155.18) and (533.1,154.07) .. (534.71,154.07) .. controls (536.32,154.07) and (537.62,155.18) .. (537.62,156.56) .. controls (537.62,157.93) and (536.32,159.05) .. (534.71,159.05) .. controls (533.1,159.05) and (531.8,157.93) .. (531.8,156.56) -- cycle ;
\draw [color={rgb, 255:red, 155; green, 155; blue, 155 }  ,draw opacity=1 ]   (527.25,94.32) -- (535.55,83.06) ;
\draw [shift={(536.73,81.45)}, rotate = 126.38] [color={rgb, 255:red, 155; green, 155; blue, 155 }  ,draw opacity=1 ][line width=0.75]    (4.37,-1.32) .. controls (2.78,-0.56) and (1.32,-0.12) .. (0,0) .. controls (1.32,0.12) and (2.78,0.56) .. (4.37,1.32)   ;
\draw   (533.83,78.96) .. controls (533.83,77.59) and (535.13,76.47) .. (536.73,76.47) .. controls (538.34,76.47) and (539.64,77.59) .. (539.64,78.96) .. controls (539.64,80.33) and (538.34,81.45) .. (536.73,81.45) .. controls (535.13,81.45) and (533.83,80.33) .. (533.83,78.96) -- cycle ;
\draw  [color={rgb, 255:red, 74; green, 144; blue, 226 }  ,draw opacity=1 ] (582.87,71.2) .. controls (585.66,71.2) and (587.92,73.46) .. (587.92,76.26) -- (587.92,115.35) .. controls (587.92,118.15) and (585.66,120.41) .. (582.87,120.41) -- (567.7,120.41) .. controls (564.91,120.41) and (562.64,118.15) .. (562.64,115.35) -- (562.64,76.26) .. controls (562.64,73.46) and (564.91,71.2) .. (567.7,71.2) -- cycle ;
\draw   (573.13,106.35) .. controls (573.13,104.98) and (574.44,103.86) .. (576.04,103.86) .. controls (577.65,103.86) and (578.95,104.98) .. (578.95,106.35) .. controls (578.95,107.72) and (577.65,108.84) .. (576.04,108.84) .. controls (574.44,108.84) and (573.13,107.72) .. (573.13,106.35) -- cycle ;
\draw [color={rgb, 255:red, 155; green, 155; blue, 155 }  ,draw opacity=1 ]   (530.16,96.8) -- (571.18,105.92) ;
\draw [shift={(573.13,106.35)}, rotate = 192.52] [color={rgb, 255:red, 155; green, 155; blue, 155 }  ,draw opacity=1 ][line width=0.75]    (4.37,-1.32) .. controls (2.78,-0.56) and (1.32,-0.12) .. (0,0) .. controls (1.32,0.12) and (2.78,0.56) .. (4.37,1.32)   ;
\draw  [color={rgb, 255:red, 208; green, 2; blue, 27 }  ,draw opacity=1 ] (573.13,87.35) .. controls (573.13,85.98) and (574.44,84.86) .. (576.04,84.86) .. controls (577.65,84.86) and (578.95,85.98) .. (578.95,87.35) .. controls (578.95,88.72) and (577.65,89.84) .. (576.04,89.84) .. controls (574.44,89.84) and (573.13,88.72) .. (573.13,87.35) -- cycle ;
\draw [color={rgb, 255:red, 208; green, 2; blue, 27 }  ,draw opacity=1 ]   (539.64,78.96) -- (571.19,86.86) ;
\draw [shift={(573.13,87.35)}, rotate = 194.06] [color={rgb, 255:red, 208; green, 2; blue, 27 }  ,draw opacity=1 ][line width=0.75]    (4.37,-1.32) .. controls (2.78,-0.56) and (1.32,-0.12) .. (0,0) .. controls (1.32,0.12) and (2.78,0.56) .. (4.37,1.32)   ;
\draw [color={rgb, 255:red, 155; green, 155; blue, 155 }  ,draw opacity=1 ]   (533.83,107.35) -- (574.04,108.77) ;
\draw [shift={(576.04,108.84)}, rotate = 182.02] [color={rgb, 255:red, 155; green, 155; blue, 155 }  ,draw opacity=1 ][line width=0.75]    (4.37,-1.32) .. controls (2.78,-0.56) and (1.32,-0.12) .. (0,0) .. controls (1.32,0.12) and (2.78,0.56) .. (4.37,1.32)   ;
\draw [color={rgb, 255:red, 155; green, 155; blue, 155 }  ,draw opacity=1 ]   (529.65,168.78) -- (534.42,186.18) ;
\draw [shift={(534.95,188.11)}, rotate = 254.68] [color={rgb, 255:red, 155; green, 155; blue, 155 }  ,draw opacity=1 ][line width=0.75]    (4.37,-1.32) .. controls (2.78,-0.56) and (1.32,-0.12) .. (0,0) .. controls (1.32,0.12) and (2.78,0.56) .. (4.37,1.32)   ;
\draw  [color={rgb, 255:red, 0; green, 0; blue, 0 }  ,draw opacity=1 ] (532.04,190.6) .. controls (532.04,189.22) and (533.34,188.11) .. (534.95,188.11) .. controls (536.55,188.11) and (537.85,189.22) .. (537.85,190.6) .. controls (537.85,191.97) and (536.55,193.08) .. (534.95,193.08) .. controls (533.34,193.08) and (532.04,191.97) .. (532.04,190.6) -- cycle ;
\draw [color={rgb, 255:red, 155; green, 155; blue, 155 }  ,draw opacity=1 ] [dash pattern={on 0.75pt off 0.75pt}]  (389,62.6) .. controls (390.68,60.95) and (392.35,60.96) .. (394,62.64) .. controls (395.65,64.32) and (397.32,64.34) .. (399,62.69) .. controls (400.68,61.04) and (402.35,61.05) .. (404,62.73) .. controls (405.65,64.41) and (407.32,64.42) .. (409,62.77) .. controls (410.68,61.12) and (412.35,61.13) .. (414,62.81) .. controls (415.65,64.49) and (417.32,64.51) .. (419,62.86) .. controls (420.68,61.21) and (422.35,61.22) .. (424,62.9) .. controls (425.65,64.58) and (427.32,64.59) .. (429,62.94) .. controls (430.68,61.29) and (432.35,61.3) .. (434,62.98) .. controls (435.65,64.66) and (437.32,64.68) .. (439,63.03) .. controls (440.68,61.38) and (442.35,61.39) .. (444,63.07) .. controls (445.65,64.75) and (447.32,64.76) .. (449,63.11) .. controls (450.68,61.46) and (452.35,61.47) .. (454,63.15) .. controls (455.65,64.83) and (457.32,64.85) .. (459,63.2) .. controls (460.68,61.55) and (462.35,61.56) .. (464,63.24) .. controls (465.65,64.92) and (467.32,64.93) .. (469,63.28) .. controls (470.68,61.63) and (472.35,61.64) .. (474,63.32) .. controls (475.65,65) and (477.32,65.02) .. (479,63.37) .. controls (480.68,61.72) and (482.35,61.73) .. (484,63.41) .. controls (485.65,65.09) and (487.32,65.1) .. (489,63.45) .. controls (490.68,61.8) and (492.35,61.81) .. (494,63.49) .. controls (495.65,65.17) and (497.32,65.19) .. (499,63.54) .. controls (500.68,61.89) and (502.35,61.9) .. (504,63.58) .. controls (505.65,65.26) and (507.32,65.27) .. (509,63.62) .. controls (510.68,61.97) and (512.35,61.98) .. (514,63.66) .. controls (515.65,65.34) and (517.32,65.36) .. (519,63.71) -- (520,63.71) -- (528,63.78) ;
\draw [shift={(530,63.8)}, rotate = 180.49] [color={rgb, 255:red, 155; green, 155; blue, 155 }  ,draw opacity=1 ][line width=0.75]    (10.93,-3.29) .. controls (6.95,-1.4) and (3.31,-0.3) .. (0,0) .. controls (3.31,0.3) and (6.95,1.4) .. (10.93,3.29)   ;

\draw (108,45.4) node [anchor=north west][inner sep=0.75pt] [font=\footnotesize,color={rgb, 255:red, 74; green, 74; blue, 74 }  ,opacity=1 ]    {\textbf{\fontfamily{ppl}\selectfont $\exists$ step}};
\draw (178,45.4) node [anchor=north west][inner sep=0.75pt]  [font=\footnotesize,color={rgb, 255:red, 74; green, 74; blue, 74 }  ,opacity=1 ] [align=left] {\textbf{\fontfamily{ppl}\selectfont Horizontal step}};
\draw (294,45.4) node [anchor=north west][inner sep=0.75pt] [font=\footnotesize,color={rgb, 255:red, 74; green, 74; blue, 74 }  ,opacity=1 ]  [align=left] {\textbf{\fontfamily{ppl}\selectfont Vertical step}};
\draw (256,227.4) node [anchor=north west][inner sep=0.75pt]  [font=\footnotesize,color={rgb, 255:red, 74; green, 74; blue, 74 }  ,opacity=1 ]  {\textbf{\fontfamily{ppl}\selectfont $\Diamond$ step}};
\draw (394,45.4) node [anchor=north west][inner sep=0.75pt] [font=\footnotesize,color={rgb, 255:red, 74; green, 74; blue, 74 }  ,opacity=1 ]  [align=left] {\textbf{\fontfamily{ppl}\selectfont Commutativity step}};
\end{tikzpicture}
\end{center}

\begin{lem}\label{rmk}
Suppose $\widehat{u},\widehat{w}\in X_k$.
\begin{enumerate}
    \item $\widehat{u}\mathrel{E_k}\widehat{w}$ iff $u\mathrel{E}w$. \label{rmk1}

    \item $\widehat{u}\mathrel{R_k}\widehat{w}$ implies $u\mathrel{Q}w$. \label{rmk3}
    
    \item $\widehat{u}\mathrel{R_k}\widehat{w}$ implies ($u\mathrel{R}w$ and $u\mathrel{E}w$) or ($u\mathrel{Q}w$ and $u \mathrel{\cancel{E}} w$). \label{rmk2}
    
    \item $\widehat{u}\mathrel{Q_k}\widehat{w}$ implies $u\mathrel{Q}w$. \label{rmk4}
    
    \item $u=w$ iff $\widehat{u}=\widehat{w}$ (no duplicates). \label{rmk5}

\end{enumerate}
\end{lem}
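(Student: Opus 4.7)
Part (1) is nothing but Lemma \ref{E-lemma}. Part (5) is immediate from the construction: in each of the $\exists$-, Horizontal, Vertical, and Commutativity Steps, a hat-copy $\widehat{t}$ is introduced into the working frame only after checking that no such copy is already present, so no original point $t \in X$ is duplicated. Part (3) follows directly from Part (2): both disjuncts of its conclusion imply $u \mathrel{Q} w$, since $R \subseteq Q$ by reflexivity of $E$ (take $z=y$ in the definition $x\mathrel{Q}y \iff \exists z\,(x\mathrel{R}z\text{ and }z\mathrel{E}y)$). For Part (4), unravel $\widehat{u}\mathrel{Q_k}\widehat{w}$ as $\widehat{u}\mathrel{R_k}\widehat{v}\mathrel{E_k}\widehat{w}$ for some $\widehat{v}\in X_k$; Parts (3) and (1) yield $u\mathrel{Q}v$ and $v\mathrel{E}w$. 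Writing the former as $u\mathrel{R}u'\mathrel{E}v$, the chain $u\mathrel{R}u'\mathrel{E}v\mathrel{E}w$ gives $u\mathrel{R}u'\mathrel{E}w$, hence $u\mathrel{Q}w$.

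The heart of the lemma is Part (2), which I plan to prove by induction on the length of a chain of \emph{basic} $R_k$-arrows realising $\widehat{u}\mathrel{R_k}\widehat{w}$; a basic arrow is one introduced directly in the Horizontal, Vertical, or Commutativity Step, or a reflexive loop. As recorded in the table at the end of the Commutativity Step, each basic arrow $\widehat{a}\to\widehat{b}$ corresponds in $\mathfrak{F}$ to exactly one of: $a\mathrel{R}b$ with $a\mathrel{E}b$ (Horizontal); $a\mathrel{Q}b$ with $a\mathrel{\cancel{E}}b$ (Vertical); or $a\mathrel{R}b$ with $a\mathrel{\cancel{E}}b$ (Commutativity). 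The base case of length $0$ is $\widehat{u}=\widehat{w}$, which by Part (5) gives $u=w$, so $u\mathrel{R}w$ (reflexivity) and $u\mathrel{E}w$. The inductive step composes a shorter chain terminating at $\widehat{v}$ with a final basic arrow $\widehat{v}\to\widehat{w}$ and case-splits on the inductive-hypothesis disjunct for $(\widehat{u},\widehat{v})$ together with the type of the final arrow. Because $(X,R,E)$ is an \textbf{MK}-frame, commutativity of $R$ and $E$ yields that $Q$ is transitive and that $E\circ Q,\,Q\circ E\subseteq Q$; combined with the transitivity of $R$ and $R\subseteq Q$, these identities handle every sub-case in which the resulting pair satisfies $u\mathrel{\cancel{E}}w$ by landing it in the second disjunct.

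The delicate sub-case, and the main obstacle, is when the chain leaves $\widehat{u}$'s $E_k$-cluster via a Vertical or Commutativity arrow and eventually returns to it, so that $u\mathrel{E}w$ holds in $\mathfrak{F}$; here we must show that the return is realised by a genuine $R$-arrow $u\mathrel{R}w$. To this end I will exploit the strong maximality of every selected point, supplied by \Cref{vert} and \Cref{smax}: each cross-cluster arrow targets some $v\in\textbf{smax}_R A$ for a specific clopen $A$, and the Commutativity Step post-composes such an arrow by a witness $b\in\textbf{smax}_R E[A]$ satisfying $u\mathrel{R}b$ and $b\mathrel{E}v$, by \crefdefpart{vert}{vert5}. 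Chasing these witnesses through the inductive composition and invoking the compatibility properties \crefdefpart{max_smax}{max_smax2} and \crefdefpart{max_smax}{max_smax3} forces the return to $\widehat{u}$'s cluster to be mediated by an actual $R$-step in $\mathfrak{F}$. Once this strong-maximality invariant is isolated, the case analysis closes uniformly and the lemma follows.
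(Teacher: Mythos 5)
Your treatment of parts (\labelcref{rmk1}), (\labelcref{rmk3}), (\labelcref{rmk4}), and (\labelcref{rmk5}) coincides with the paper's, which likewise reduces (\labelcref{rmk1}) to \Cref{E-lemma}, derives (\labelcref{rmk3}) from (\labelcref{rmk2}) and (\labelcref{rmk4}) from (\labelcref{rmk1}) and (\labelcref{rmk3}), and gets (\labelcref{rmk5}) from the no-duplicates discipline of the construction. For part (\labelcref{rmk2}) the paper contents itself with the one-line remark that $R_k$-arrows are introduced only when one of the two conditions holds; you are right to insist that, since $R_k$ is a reflexive-transitive closure, the dichotomy must also survive composition of basic arrows, and your classification of the three basic arrow types and the routine transitivity cases (using $R\subseteq Q$, transitivity of $Q$, and $E\circ Q,\,Q\circ E\subseteq Q$) are all correct.

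The gap is in the one sub-case you yourself flag as delicate: a composite chain from $\widehat{u}$ to $\widehat{w}$ that contains a cross-cluster (Vertical or Commutativity) arrow while $u\mathrel{E}w$. Your plan --- chasing witnesses via \crefdefpart{vert}{vert5} and \crefpart{max_smax}{max_smax2}{max_smax3} to show that ``the return to $\widehat{u}$'s cluster is mediated by an actual $R$-step'' --- is never carried out, and it aims at the wrong target: that sub-case is vacuous. Indeed, if $u\mathrel{E}w$ then $w\mathrel{Q}u$ (reflexivity of $R$ plus $w\mathrel{E}u$), so every point of the chain lies in the single $Q$-cluster $E_Q[u]$; since every selected point is strongly maximal in some clopen set, \crefdefpart{E_Q}{E_Q1} gives $E_Q[u]=E[u]$, so all chain points lie in one $E$-cluster and no cross-cluster arrow can occur in the chain at all. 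Hence every arrow of the chain is a Horizontal (or reflexive) one, i.e.\ a genuine $R$-arrow within an $E$-cluster, and transitivity of $R$ yields $u\mathrel{R}w$, landing the pair in the first disjunct. Replacing your witness-chasing sketch with this appeal to \crefdefpart{E_Q}{E_Q1} is what closes the induction; as written, the proposal leaves its central case unproved.
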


\begin{proof}
(\labelcref{rmk1}) and (\labelcref{rmk3}) 
follow from \Cref{lemma_lc}.

(\labelcref{rmk2})  
Let $\widehat{u}\mathrel{R_k}\widehat{w}$. If $\widehat{u}\mathrel{\cancel{E_k}}\widehat{w}$, applying (\labelcref{rmk1}) and (\labelcref{rmk3}) yields $u\mathrel{Q}w$ and $u\mathrel{\cancel{E}}w$. 
Suppose that $\widehat{u}\mathrel{E_k}\widehat{w}$. Then $u\mathrel{E}w$ by (\labelcref{rmk1}). Because we take the reflexive-transitive closure at each step, we have $\widehat{u}\mathrel{R_k}\widehat{x}_1\mathrel{R_k}\dots\widehat{x}_{n-1}\mathrel{R_k}\widehat{w}$, where each $R_k$-arrow is introduced in one of the steps of the construction. By (\labelcref{rmk3}), $u\mathrel{Q}x_1\mathrel{Q}\dots x_{n-1}\mathrel{Q}w$. We 
show that $u\mathrel{E}x_i$ for each $1\leq i\leq n-1$. First, 
note that $u\mathrel{Q}x_i\mathrel{Q}w\mathrel{Q}u$, where $w\mathrel{Q}u$ follows from $w\mathrel{E}u$. Consequently, $u\mathrel{Q}x_i\mathrel{Q}u$, and since $u$ is strongly maximal, we have $u\mathrel{E}x_i$ by \crefdefpart{E_Q}{E_Q1}. Thus, $\widehat{u}\mathrel{R_k}\widehat{x}_1\mathrel{R_k}\dots\widehat{x}_{n-1}\mathrel{R_k}\widehat{w}$ and  $\widehat{u}\mathrel{E_k}\widehat{x}_1\mathrel{E_k}\dots\widehat{x}_{n-1}\mathrel{E_k}\widehat{w}$, where the latter follows from (\labelcref{rmk1}). This means that each $R_k$-relation is introduced in the horizontal step, and by construction, such relations correspond to $R$-relations in the original frame. Thus, $u\mathrel{R}x_1\mathrel{R}\dots x_{n-1}\mathrel{R}w$, and hence $u\mathrel{R}w$.

(\labelcref{rmk4}) This follows from (\labelcref{rmk1}) and (\labelcref{rmk3}).

(\labelcref{rmk5}) Suppose $\widehat{u}\neq\widehat{w}$. If $u=w$, then $\widehat{u}$ was added to $X_k$ twice, which contradicts the fact that we introduce new points only if they have not already been added. Hence, $u=w\implies\widehat{u}=\widehat{w}$. Suppose $u\neq w$. If $\widehat{u}=\widehat{w}$, then distinct points in $X$ have the same copies in $X_k$, which is impossible. Hence, $\widehat{u}=\widehat{w}\implies u=w$.
\end{proof}

For the next lemma, we recall that a subset $C$ of a partially ordered set $(P,\leq)$ is a \emph{chain} if $x\le y$ or $y\le x$ for each $x,y\in C$. 
The \emph{length} of a chain $C$ is the cardinality of $C$. 
The \emph{depth} of $P$ is $n<\omega$ if there is a chain in $P$ of length $n$ and no other chain in $P$ has higher length. We say that $P$ is of \emph{finite depth} if $P$ is of depth $n$ for some $n<\omega$.

\begin{lem}{\label{partial}}
    $\mathfrak{F}_k$ is a finite partially ordered {\textbf{MS4}}-frame for each $k<\omega$.
\end{lem}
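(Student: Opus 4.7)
The plan is to proceed by induction on $k$, verifying for $\mathfrak{F}_k$ the four required properties: finiteness, that $R_k$ is a partial order, that $E_k$ is an equivalence relation, and commutativity between $R_k$ and $E_k$. The base case $\mathfrak{F}_0 = (\{\widehat{x}\}, R_0, E_0)$ is immediate. For the inductive step, reflexivity and transitivity of $R_k$, together with $E_k$ being an equivalence relation, follow directly from the construction: each substep explicitly takes the reflexive-transitive closure when $R$-arrows are added and generates the least equivalence relation when $E$-pairs are added. Commutativity is exactly what the dedicated commutativity substep enforces.

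The conceptual heart is antisymmetry of $R_k$. Assume $\widehat{u}\mathrel{R_k}\widehat{w}$ and $\widehat{w}\mathrel{R_k}\widehat{u}$. By \crefdefpart{rmk}{rmk2}, each direction is of one of two types: (H) $u\mathrel{R}w$ with $u\mathrel{E}w$, or (V) $u\mathrel{Q}w$ with $u\mathrel{\cancel{E}}w$. Mixed pairings are excluded at once, since $u\mathrel{E}w$ and $u\mathrel{\cancel{E}}w$ cannot coexist. For the (H,H) case, $u$ and $w$ lie in the same $R$-cluster; a direct inspection of the construction (the initial $\widehat{x}$, the $\exists$-witnesses provided by \Cref{horz}, the horizontal and vertical $\Diamond$-witnesses from \Cref{vert}, and the commutativity witnesses from \crefdefpart{vert}{vert5}) shows that every point of $X_k$ is strongly maximal in some clopen set, hence maximal; \Cref{enter-exit} then forces $u = w$, so $\widehat{u}=\widehat{w}$ by \crefdefpart{rmk}{rmk5}. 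For the (V,V) case, $u\mathrel{Q}w$ and $w\mathrel{Q}u$ give $u\mathrel{E_Q}w$; since $u$ is strongly maximal in some clopen, \crefdefpart{E_Q}{E_Q1} yields $E_Q[u]=E[u]$, contradicting $u\mathrel{\cancel{E}}w$.

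The principal obstacle is finiteness, and specifically the termination of the commutativity substep. The $\exists$-, horizontal, and vertical substeps each introduce at most $|X_{\text{current}}|\cdot|S|$ new points, which is finite by the inductive hypothesis together with the finiteness of $S$. For commutativity, the key observations are that every newly added witness $\widehat{b}$ joins an already-present $E_k^\Diamond$-cluster (namely that of the corresponding vertical target $\widehat{z}$), so the total number of $E_k^\Diamond$-clusters remains fixed throughout this step, and that the bridging arrows $(\widehat{t},\widehat{b})$ connect only pairs of clusters already linked by a vertical arrow. A careful bookkeeping of how cluster sizes grow, and how cascading commutativity defects triggered by witnesses added in cluster-joining fashion are resolved, yields termination after finitely many additions; this is the most delicate ingredient of the proof, whereas antisymmetry, once the right ingredients are assembled, is conceptually clean.
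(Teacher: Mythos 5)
Your proof is correct and takes essentially the same route as the paper's: the preorder, equivalence, and commutativity properties are read off the construction, and antisymmetry is handled by the same case split via \crefdefpart{rmk}{rmk2}, using the strong maximality of all selected points together with \Cref{enter-exit} in the $(u\mathrel{R}w,\,u\mathrel{E}w)$ case and \crefdefpart{E_Q}{E_Q1} in the $(u\mathrel{Q}w,\,u\mathrel{\cancel{E}}w)$ case. Your discussion of the termination of the commutativity step is in fact more attentive than the paper's, which simply asserts that only finitely many new points are introduced at each stage.
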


\begin{proof}
First, it is immediate from the construction that $R_k$ is a preorder and $E_k$ is an equivalence relation. Commutativity holds as well since it is built into the construction of $\mathfrak{F}_k$. Hence, $\mathfrak{F}_{k}$ is an \textbf{MS4}-frame. 

We show that $R_k$ is antisymmetric. Suppose $\widehat{u} \mathrel{R_k} \widehat{w}$ in $\mathfrak{F}_k$ with $\widehat{u}\neq\widehat{w}$. By \crefdefpart{rmk}{rmk5}, $u\neq w$. By \crefdefpart{rmk}{rmk2}, ($u\mathrel{R}w$ and $u\mathrel{E}w$) or ($u\mathrel{Q}w$ and $u\mathrel{\cancel{E}}w$). First suppose the former condition holds. If $\widehat{w}\mathrel{R_k}\widehat{u}$, then since $w\mathrel{E}u$, we must have $w\mathrel{R}u$ by \crefdefpart{rmk}{rmk2}. However, $u$ and $w$ are both (strongly) maximal points, which means that they cannot form an $R$-cluster (see \Cref{enter-exit}), a contradiction. Now suppose the latter condition holds. If $\widehat{w}\mathrel{R_k}\widehat{u}$, then since $w\mathrel{\cancel{E}}u$, we must have $w\mathrel{Q}u$ by \crefdefpart{rmk}{rmk2}. However, $u$ and $w$ are strongly maximal points, which means that they cannot form a $Q$-cluster that is not already an $E$-cluster (see \crefdefpart{E_Q}{E_Q1}), a contradiction. Thus, we cannot have $\widehat{w}\mathrel{R_k}\widehat{u}$, and hence the relation $R_k$ is antisymmetric. Therefore,  $\mathfrak{F}_k=(X_k,R_k,E_k)$ is a partially ordered $\textbf{MS4}$-frame. 

Finally, we show that the set $X_k$ is finite by induction on $k$. Clearly $X_0$ is finite. Let $k>1$. First note that the $E_k$-skeleton $(\mathfrak{F}_k)_0$ of $\mathfrak{F}_k$ is entirely determined by the points in the set $X_k^\Diamond$ because no new $E_k$-clusters are introduced in the commutativity step. Since $X_k^\Diamond$ is finite (because it is obtained by adding finitely many points to $X_{k-1}$, which is finite by the induction hypothesis), we conclude that $(\mathfrak{F}_k)_0$ is finite. By \crefpart{rmk}{rmk1}{rmk4}, we see that $\mathfrak{F}_k$ does not have any $Q_k$-clusters that are not already $E_k$-clusters (see \crefdefpart{E_Q}{E_Q1}), so $(\mathfrak{F}_k)_0$ is a poset, which is clearly rooted by construction. Let the depth of $(\mathfrak{F}_k)_0$ be $n<\omega$.
Observe that the newly added points in the $j^{th}$ commutativity step are given by the set $X_k^{j}-X_k^{j-1}$. It is clear that the points in $X_k^1-X_k^0$ cannot occur in the root cluster, the points in $X_k^2-X_k^1$ cannot occur in the root cluster or the $E_k$-clusters that correspond to the immediate successors of the root cluster in $(\mathfrak{F}_k)_0$, and so on. In general, points in $X_k^{j+1}-X_k^j$ are not in the $E_k$-clusters that correspond to points of depth at most $n-j$ in $(\mathfrak{F}_k)_0$. Therefore, at some $j$, newly added points occur in the final $E_k$-clusters, after which no more commutativity is demanded. Hence, with this $j$, we have $X_k^j=X_k^{j+l}$ for all $l<\omega$. Since $X_k^j$ is finite, $X_k$ is finite. 
\end{proof}

Let $\widehat{\mathfrak{F}} = (\widehat{X},\widehat{R},\widehat{E})$, where
\begin{equation*}
    \widehat{X}=\bigcup_{k<\omega}X_k, \quad \widehat{R}=\bigcup_{k<\omega}R_k, \quad \widehat{E}=\bigcup_{k<\omega}E_k.
\end{equation*}

As a direct consequence of \Cref{partial}, we obtain: 

\begin{lem}{\label{partial2}}
    $\widehat{\mathfrak{F}}$ is a partially ordered \textbf{MS4}-frame.
\end{lem}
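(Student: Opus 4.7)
The plan is to observe that the sequence $(\mathfrak{F}_k)_{k<\omega}$ is an increasing chain and then lift the properties pointwise via \Cref{partial}. First I would inspect each substep of the construction ($\exists$-step, horizontal step, vertical step, and commutativity step) and note that points are only ever added to $X_k$, and pairs only ever added to $R_k$ and $E_k$; nothing is removed. This gives $X_{k-1}\subseteq X_k$, $R_{k-1}\subseteq R_k$, and $E_{k-1}\subseteq E_k$ for every $k\geq 1$, so $\widehat{X}$, $\widehat{R}$, $\widehat{E}$ are genuine directed unions.

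Next I would verify each requirement for $\widehat{\mathfrak{F}}$ to be a partially ordered \textbf{MS4}-frame: $\widehat{R}$ is reflexive, transitive, and antisymmetric; $\widehat{E}$ is reflexive, symmetric, and transitive; and the commutativity condition holds. Each such property is witnessed by at most three points and at most two pairs at a time, so given any instance one can pick a single $k$ large enough that all witnessing points and pairs already lie in $\mathfrak{F}_k$. By \Cref{partial}, $\mathfrak{F}_k$ is a partially ordered \textbf{MS4}-frame, so the required conclusion (an equality, a pair of the form $(x,x)$, or the existence of a commutativity witness $u$) holds in $\mathfrak{F}_k$; because $R_k\subseteq\widehat{R}$ and $E_k\subseteq\widehat{E}$, it holds in $\widehat{\mathfrak{F}}$.

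I do not expect any real obstacle. The only mildly delicate cases are antisymmetry---where $\widehat{u}\mathrel{\widehat{R}}\widehat{w}$ and $\widehat{w}\mathrel{\widehat{R}}\widehat{u}$ both sit in some common $R_k$, whence $\widehat{u}=\widehat{w}$---and commutativity, where one must note that if $\widehat{x}\mathrel{\widehat{E}}\widehat{y}\mathrel{\widehat{R}}\widehat{z}$ with all three pairs appearing in some $E_k$ and $R_k$, then the witness $\widehat{u}$ provided inside the finite frame $\mathfrak{F}_k$ continues to witness commutativity in $\widehat{\mathfrak{F}}$. Both reduce to the chain property verified in the first paragraph.
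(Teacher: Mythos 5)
Your proposal is correct and matches the paper's approach: the paper simply states the lemma as a direct consequence of \Cref{partial}, and your argument spells out the standard justification that the properties in question are all finitely witnessed and hence lift from the increasing chain $(\mathfrak{F}_k)_{k<\omega}$ to the union $\widehat{\mathfrak{F}}$.
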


Define a valuation $\widehat{v}$ by taking $\widehat{v}(p)=\{\widehat{y}\in \widehat{X}\mid y\in v(p)\}$
for a propositional letter $p$ in $S$, and $\widehat{v}(p)=\varnothing$ otherwise.

\begin{lem}[Truth Lemma]{\label{TL}}
    For any $\widehat{y}\in \widehat{X}$ and $\psi\in S$,
    \begin{equation*}
        \widehat{y}\models_{\widehat{v}}\psi \iff y\models_{v}\psi.
    \end{equation*}
\end{lem}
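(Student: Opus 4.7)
The plan is to proceed by induction on the complexity of $\psi\in S$. The atomic case $\psi=p$ is immediate from the definition of $\widehat v$, and the Boolean cases are routine since $S$ is closed under subformulae. The real content lies in the modal cases.

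For $\psi=\exists\chi$, the $(\Rightarrow)$ direction uses \Cref{rmk}(\ref{rmk1}): from $\widehat y\mathrel{\widehat E}\widehat w$ with $\widehat w\models_{\widehat v}\chi$ we obtain $y\mathrel{E}w$, and the induction hypothesis gives $w\models_v\chi$, so $y\models_v\exists\chi$. For $(\Leftarrow)$, if $y\models_v\chi$ then reflexivity of $E$ and the induction hypothesis suffice; otherwise $\exists\chi\in W_y^\exists$, and the $\exists$-Step applied at the stage after $\widehat y$ is introduced supplies a witness $\widehat w$ with $\widehat y\mathrel{\widehat E}\widehat w$ and $w\models_v\chi$. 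The $(\Leftarrow)$ direction for $\psi=\Diamond\chi$ is handled analogously using the $\Diamond$-Step (horizontal or vertical, depending on whether $\chi$ is witnessed inside the $E$-cluster of $y$).

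The main obstacle is the $(\Rightarrow)$ direction for $\psi=\Diamond\chi$: from $\widehat y\mathrel{\widehat R}\widehat z$ with $\widehat z\models_{\widehat v}\chi$ we only get $y\mathrel{Q}z$ via \Cref{rmk}(\ref{rmk3}), which by itself is insufficient to conclude $y\models_v\Diamond\chi$. The remedy is to decompose $\widehat y\mathrel{\widehat R}\widehat z$ into a chain of \emph{atomic} arrows $\widehat y=\widehat{y_0}\to\widehat{y_1}\to\cdots\to\widehat{y_n}=\widehat z$, where by ``atomic'' I mean an arrow introduced directly by the Horizontal, Vertical, or Commutativity Step, rather than one produced by the reflexive-transitive closure. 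Such a decomposition always exists because $\widehat R$ is the reflexive-transitive closure of all atomic arrows added throughout the construction.

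The goal is to prove by backward induction on $i$ that $y_i\models_v\Diamond\chi$; the base case $i=n$ follows from $z\models_v\chi$ and reflexivity of $R$. For the inductive step, if the atomic arrow $\widehat{y_i}\to\widehat{y_{i+1}}$ comes from the Horizontal or the Commutativity Step, then $y_i\mathrel{R}y_{i+1}$ by construction, and transitivity of $R$ combined with $y_{i+1}\models_v\Diamond\chi$ yields $y_i\models_v\Diamond\chi$. If the arrow is vertical, then $\widehat{y_{i+1}}$ was introduced as a vertical witness for $\widehat{y_i}$ with respect to some $\Diamond\psi'\in W_{y_i}^\Diamond$, so
\[
y_{i+1}\in A=v(\Diamond\psi')\cap\bigcap\bigl\{v(\neg\Diamond\alpha)\mid\Diamond\alpha\in S,\,y_i\not\models_v\Diamond\alpha\bigr\};
\]
since $\Diamond\chi\in S$ and $y_{i+1}\models_v\Diamond\chi$ by the induction hypothesis, $y_{i+1}\notin v(\neg\Diamond\chi)$, which forces $y_i\models_v\Diamond\chi$. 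Taking $i=0$ completes the argument. The crux of the proof is precisely this vertical case: the set $A$ is engineered so that $\Diamond$-formulae from $S$ propagate backwards along vertical arrows, which is exactly what the chain induction needs once horizontal and commutativity arrows are handled by transitivity of $R$.
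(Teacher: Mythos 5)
Your proof is correct and follows the same overall strategy as the paper's: induction on the complexity of $\psi$, with the forward direction of the modal cases discharged by the witnesses built into the construction, and the backward $\Diamond$ direction discharged by the $\bigcap\{v(\neg\Diamond\alpha)\mid \Diamond\alpha\in S,\ y\not\models_v\Diamond\alpha\}$ component of the set $A$. Where you genuinely diverge is in how you handle $\widehat{y}\mathrel{\widehat{R}}\widehat{z}$ in the hard direction. The paper argues in a single step: it assumes $y\mathrel{\cancel{R}}z$, asserts that the only way such a pair can enter $R_k$ is as a directly added vertical-witness arrow, and then applies \crefdefpart{vert}{vert2} to the pair $(y,z)$ itself. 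Strictly speaking, that assertion overlooks the reflexive-transitive closure: a pair $(\widehat{y},\widehat{z})\in R_k$ with $y\mathrel{\cancel{R}}z$ can be a composite, e.g.\ a vertical arrow followed by a horizontal one, in which case $\widehat{z}$ is not a vertical witness for $\widehat{y}$ and $z$ need not lie in the set $A$ associated with $y$. Your decomposition into atomic arrows, together with the backward induction that propagates $\Diamond\chi$ through vertical arrows via membership in $A$ and through horizontal/commutativity arrows via transitivity of $R$, fills exactly this gap and reaches the same conclusion. So your argument is not merely an alternative route; it is the rigorous version of the step the paper treats somewhat elliptically. The remaining cases ($\exists$ in both directions, $\Diamond$ forward, atoms, Booleans) match the paper's proof essentially verbatim.
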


\begin{proof}
We prove this by induction on the complexity of the formula $\psi$. The base case follows from the definition of $\widehat{v}$, and the $\wedge$ and $\lnot$ cases are obvious. It is left to consider the $\Diamond$ and $\exists$ cases.

Suppose $y\models_v\exists\delta$. If $y\models_v\delta$, then 
$\widehat{y}\models_{\widehat{v}}\delta$ by the induction hypothesis, and hence $\widehat{y}\models_{\widehat{v}}\exists\delta$ since $\widehat{y}\mathrel{\widehat{E}}\widehat{y}$. Suppose $y\not\models_v\delta$. Then $\exists\delta\in W_y^\exists$. Let $k<\omega$ be the least number for which $\widehat{y}\in X_k$. Since $\exists\delta\in W_y^\exists$, by construction, there 
is $\widehat{z}\in X_{k+1}$ such that $\widehat{y}\mathrel{E_{k+1}}\widehat{z}$ and $z\models_v\delta$. 
By the induction hypothesis, $\widehat{z}\models_{\widehat{v}}\delta$, and $\widehat{y}\mathrel{E_{k+1}}\widehat{z}$ implies that $\widehat{y}\mathrel{\widehat{E}}\widehat{z}$. Thus,  $\widehat{y}\models_{\widehat{v}}\exists\delta$.

Suppose $\widehat{y}\models_{\widehat{v}}\exists\delta$. Then there is $\widehat{z}\in\widehat{X}$ such that $\widehat{y}\mathrel{\widehat{E}}\widehat{z}$ and $\widehat{z}\models_{\widehat{v}}\delta$. By the induction hypothesis, $z\models_v\delta$, and $\widehat{y}\mathrel{\widehat{E}}\widehat{z}$ implies $\widehat{y}\mathrel{E_k}\widehat{z}$ for some $k<\omega$. Therefore, $y\mathrel{E}z$ by \crefdefpart{rmk}{rmk1}, and thus $y\models_v\exists\delta$.

Now let $y\models_v\Diamond\delta$. If $y\models_v\delta$, 
then the induction hypothesis yields $\widehat{y}\models_{\widehat{v}}\delta$, and 
$\widehat{y}\mathrel{\widehat{R}}\widehat{y}$ implies  that $\widehat{y}\models_{\widehat{v}}\Diamond\delta$. Suppose $y\not\models_v\delta$. Then $\Diamond\delta\in W_y^\Diamond$, so there is a least $k<\omega$ such that $\widehat{y}\in X_k$. By construction, there is $\widehat{z}\in X_{k+1}$ such that $\widehat{y}\mathrel{R_{k+1}}\widehat{z}$ and $z\models_v\delta$. 
By the induction hypothesis, $\widehat{z}\models_v\delta$, and $\widehat{y}\mathrel{\widehat{R}}\widehat{z}$ since 
$R_{k+1}\subseteq\widehat{R}$. Thus,
$\widehat{y}\models_{\widehat{v}}\Diamond\delta$.

Finally, let $\widehat{y}\models_{\widehat{v}}\Diamond\delta$. Then there is $\widehat{z}\in\widehat{X}$ such that  $\widehat{y}\mathrel{\widehat{R}}\widehat{z}$ and $\widehat{z}\models_{\widehat{v}}\delta$. Choose the least $k,l<\omega$ for which $\widehat{y}\mathrel{R_k^l}\widehat{z}$. Because we take the reflexive-transitive closure every time an $R_k^l$-relation is introduced, we have a chain $$\widehat{y}=\widehat{x}_1\mathrel{R_k^l}\widehat{x}_2\mathrel{R_k^l}\dots
\widehat{x}_n\mathrel{R_k^l}\widehat{x}_{n+1}=\widehat{z}.$$ 

To see that $y\models\Diamond\delta$, we first show that for any $j\geq2$, we have  $x_j\models\Diamond\delta\implies x_{j-1}\models \Diamond\delta$. So, assume that $x_j\models\Diamond\delta$. If $x_{j-1}\mathrel{R}x_j$, then $x_{j-1}\models\Diamond\Diamond\delta$, which yields $x_{j-1}\models\Diamond\delta$. Suppose $x_{j-1}\mathrel{\cancel{R}}x_j$. By \crefdefpart{rmk}{rmk3}, $x_{j-1}\mathrel{Q}x_j$. Since $x_{j-1}\mathrel{Q}x_j$, $x_{j-1}\mathrel{\cancel{R}}x_j$, and $\widehat{x}_{j-1}\mathrel{R_k^l}\widehat{x}_j$, a proper $Q$-arrow was turned into a $R_k^l$-arrow. This can happen only in the vertical step of the construction, and hence $\widehat{x}_{j}$ is a vertical $\Diamond$-witness for $\widehat{x}_{j-1}$ with respect to some formula $\Diamond\psi\in S$. Since vertical $\Diamond$-witnesses are chosen using \Cref{vert}, we must have $x_j\in\textbf{smax}_R A$, where
\begin{equation*}
    A = v(\Diamond\psi)\cap\bigcap\bigl\{ v(\neg\Diamond\alpha)\mid \Diamond\alpha\in S,\, x_{j-1}\not\models \Diamond\alpha \bigr\}.
\end{equation*}
Therefore, if $x_{j-1}\not\models\Diamond\delta$, we must also have $x_j\not\models\Diamond\delta$, which contradicts our assumption that $x_j\models\Diamond\delta$. 
Thus, $x_{j-1}\models\Diamond\delta$.

Applying \crefdefpart{rmk}{rmk5} to $\widehat{x}_{n+1}=\widehat{z}$ yields $x_{n+1}=z$, and by the induction hypothesis we have $z\models\delta$. Consequently,   
$x_{n+1}\models\delta$, and by repeated application of $x_j\models\Diamond\delta\implies x_{j-1}\models\Diamond\delta$, we obtain $x_1\models\Diamond\delta$. From $\widehat{x}_1=\widehat{y}$ it follows that $x_1=y$ by \crefdefpart{rmk}{rmk5}, and hence $y \models\Diamond\delta$.
\end{proof}


The next two lemmas show that the length of each $\widehat{R}$-chain in $\widehat{\mathfrak{F}}=(\widehat{X},\widehat{R},\widehat{E})$ is bounded.

\begin{lem}{\label{chain1}}
    If $\widehat{C}$ is an $\widehat{R}$-chain \[\widehat{x}_1\mathrel{\widehat{R}}\widehat{x}_2\dots \widehat{x}_{n-1}\mathrel{\widehat{R}}\widehat{x}_n\dots\] in $\widehat{X}$ such that $\widehat{x}_i\mathrel{\widehat{E}}\widehat{x}_j$ for all $i$, $j$, then the length of $\widehat{C}$ is bounded by $2^{|S|}$.
\end{lem}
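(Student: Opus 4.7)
The plan is to map $\widehat{C}$ injectively into the power set $2^S$ by sending each $\widehat{x}_i$ to its \emph{$S$-type} $\tau(\widehat{x}_i) = \{\psi \in S \mid x_i \models_v \psi\}$. Since $|2^S| = 2^{|S|}$, injectivity of $\tau$ on $\widehat{C}$ yields the claimed bound.

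First, I would transfer the chain back to the original frame. By \crefdefpart{rmk}{rmk1}, the hypothesis that all $\widehat{x}_i$ are $\widehat{E}$-equivalent gives $x_i \mathrel{E} x_j$ in $\mathfrak{F}$, so the $x_i$ live in a single $E$-cluster. By \crefdefpart{rmk}{rmk2}, every step $\widehat{x}_i \mathrel{\widehat{R}} \widehat{x}_{i+1}$ of the chain then comes from an honest $R$-arrow $x_i \mathrel{R} x_{i+1}$ in $\mathfrak{F}$; and by antisymmetry of $\widehat{R}$ (\Cref{partial}), the points $x_1,x_2,\dots$ are pairwise distinct. By construction, each $x_i$ is strongly maximal in some clopen subset of $X$.

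To prove injectivity of $\tau$, fix $i < j$. Within a single $\widehat{E}$-cluster, only the horizontal step contributes direct $\widehat{R}$-arrows (the vertical and commutativity steps both cross distinct $\widehat{E}$-clusters). Hence any decomposition of $\widehat{x}_i \mathrel{\widehat{R}} \widehat{x}_j$ into direct arrows begins with a horizontal-step witness $\widehat{x}_i \to \widehat{y}_1$ introduced for some $\Diamond\psi \in W_{x_i}^{\Diamond}$; thus $x_i \models \Diamond\psi$, $x_i \not\models \psi$, $y_1 \models \psi$, and $y_1$ is strongly maximal in the clopen set $A$ of \Cref{vert}. A short calculation, using $R$-monotonicity of the $\neg\Diamond\alpha$-constraints defining $A$, yields $y_1 \in \textbf{max}_R v(\psi)$. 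Applying \crefdefpart{rmk}{rmk2} again gives $y_1 \mathrel{R} x_j$ with $y_1 \neq x_j$. Suppose, toward a contradiction, that $x_j \models \Diamond\psi$; then $x_j \mathrel{R} w$ for some $w \in v(\psi)$, and transitivity combined with the $R$-maximality of $y_1$ in $v(\psi)$ forces $w = y_1$, so $x_j \mathrel{R} y_1$. This makes $\{y_1, x_j\}$ a nontrivial $R$-cluster, contradicting \Cref{enter-exit}, which rules out such clusters on strongly maximal points in a descriptive \textbf{MGrz}-frame. Therefore $x_j \not\models \Diamond\psi$, while $x_i \models \Diamond\psi \in S$, giving $\tau(\widehat{x}_i) \neq \tau(\widehat{x}_j)$.

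The main obstacle is the bookkeeping in the middle step: one must verify that within a fixed $\widehat{E}$-cluster the relation $\widehat{R}$ arises purely from horizontal witness arrows, and that the $y_1$ introduced there is $R$-maximal in $v(\psi)$. Once these facts are secured, the descriptive \textbf{MGrz}-condition---through the refusal of strongly maximal points to inhabit any nontrivial $R$-cluster---supplies the separating formula $\Diamond\psi$ and hence the desired injectivity of $\tau$, which bounds $|\widehat{C}|$ by $2^{|S|}$.
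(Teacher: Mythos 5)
Your overall strategy is the same as the paper's: transfer the chain to an $R$-chain of pairwise distinct points inside a single $E$-cluster of $\mathfrak{F}$ via \crefpart{rmk}{rmk1}{rmk2}, and then bound its length by showing the selected points have pairwise distinct $S$-types. Where you genuinely diverge is in how the types are separated. The paper fixes $i<j$, takes $U$ to be the (clopen) $\sim_S$-class of $x_i$, shows $x_{i+1}\notin U$, and then runs a three-way case analysis on how $\widehat{x}_i$ itself entered the construction ($\exists$-step, $\Diamond$-step, commutativity step) to exhibit a clopen set, built from $U$, in which $x_i$ is $R$-maximal; the passive-point form of \Cref{enter-exit} then forbids $x_j\in U$. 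You instead read off an explicit separating formula $\Diamond\psi$ from the first horizontal witness arrow out of $\widehat{x}_i$ and use only properties of the witness $y_1$ (its maximality in $v(\psi)$ and the fact that maximal points admit no nontrivial $R$-cluster). This avoids the paper's case analysis on the provenance of $\widehat{x}_i$ entirely, which is a real simplification. Your two supporting claims are sound: the reduction to horizontal arrows does hold, since an $\widehat{R}$-path cannot leave an $\widehat{E}$-cluster and return (by \crefdefpart{rmk}{rmk4}, \crefdefpart{E_Q}{E_Q1}, and \crefdefpart{rmk}{rmk1}, essentially the antisymmetry argument of \Cref{lem: F0 is poset}); and $y_1\in{\textbf{max}}_R v(\psi)$ holds in both branches of the horizontal step (directly from \crefdefpart{vert}{vert4} in one, and from $y_1\in{\textbf{smax}}_R A$ together with $R$-persistence of the $\neg\Diamond\alpha$ conjuncts of $A$ in the other).

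The one genuine gap is the unjustified assertion that $y_1\neq x_j$. Nothing rules out $\widehat{y}_1=\widehat{x}_j$ --- this happens, for instance, whenever $j=i+1$ and $\widehat{x}_{i+1}$ is itself the direct horizontal witness for $\widehat{x}_i$ --- and in that case your conclusion $x_j\not\models_v\Diamond\psi$ is actually false: $x_j=y_1\models_v\psi$ forces $x_j\models_v\Diamond\psi$ by reflexivity. The lemma survives because in that case $\psi$ itself (which lies in $S$) separates the types: $x_i\not\models_v\psi$ since $\Diamond\psi\in W_{x_i}^\Diamond$, while $x_j\models_v\psi$. So your argument needs a short case split on whether $y_1=x_j$; with that added, it goes through and yields the bound $2^{|S|}$.
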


\begin{proof}
 First, observe that it is enough to establish the above claim when $\widehat{C}$ is a chain where every relation $\widehat{x}_i\mathrel{\widehat{R}}\widehat{x}_{i+1}$ is introduced at some stage of the construction (otherwise, we can decompose $\widehat{x}_i\mathrel{\widehat{R}}\widehat{x}_{i+1}$ further into a chain $\widehat{x}_i\mathrel{\widehat{R}}\widehat{y}_1\dots\widehat{y}_n\mathrel{\widehat{R}}\widehat{x}_{i+1}$, where the latter relations are indeed introduced through the course of the construction).  Now, note that $\widehat{x}_i\mathrel{\widehat{R}}\widehat{x}_j$ and $x_i\mathrel{\widehat{E}}x_j$ imply that $\widehat{x}_i\mathrel{R_k} \widehat{x}_j$ and $\widehat{x}_i\mathrel{E_k} \widehat{x}_j$ for some $k<\omega$. Hence, $x_i\mathrel{R}x_j$ and $x_i\mathrel{E}x_j$ by \crefpart{rmk}{rmk1}{rmk2}.
We let $C$ be the $R$-chain  
\[
x_1 \mathrel{R} x_2 \dots x_{n-1} \mathrel{R} x_{n} \dots
\] 
in $X$. 
We claim that $x_i \mathrel{\cancel{\sim}_S} \, x_j$ for $i \neq j$. Without loss of generality, suppose $i<j$. Let
\begin{equation*}
    U= \bigcap\{v(\alpha)\mid \alpha\in S,\,  x_i\models_v\alpha\} \cap \bigcap \{v(\neg\beta) \mid \beta\in S,\,x_i\not\models_v\beta\}.
\end{equation*}

We have:
\begin{itemize}
    \item $U$ is a clopen set in $X$ such that
    $x_i\in U$.
    \item $x_i\mathrel{\sim_S}x_k$ iff $x_k\in U$ for each $k$. 
    \item $\widehat{x}_i\mathrel{\widehat{R}} \widehat{x}_{i+1}$ and $\widehat{x}_i\mathrel{E_k}\widehat{x}_{i+1}$ for some $k<\omega$. Because ${R}_k$-arrows are drawn in $E_k$-clusters only to introduce $\Diamond$-witnesses,  it follows that $\widehat{x}_{i+1}$ is a witness for $\widehat{x}_i$ with respect to a formula $\Diamond\delta\in W_{{x}_i}^\Diamond$; i.e., $x_i\models_v\Diamond\delta$ and $x_i\not\models_v\delta$. Since $x_{i+1}\models_v\delta$, we have $x_{i+1}\notin v(\neg\delta)$. Further, $x_i\not\models_v\delta$ implies $U\subseteq v(\neg\delta)$. Therefore, because $x_{i+1}\notin v(\neg\delta)$, we see that $x_{i+1}\notin U$.  
\end{itemize}

Now, $\widehat{x}_i$ is added to $\widehat{X}$ in one of three ways: in the $\exists$-\textbf{Step}, in the $\Diamond$-\textbf{Step}, or in the \textbf{Commutativity Step}. We address each case separately. 

\begin{enumerate}
    \item Suppose $\widehat{x}_i$ is added in the $\exists$-\textbf{step}. Then, by \Cref{horz}, we must have $x_i\in \textbf{smax}_R(E[V]\cap v(\psi))$ for some clopen $V$ and $\exists\psi\in S$. Since $U\subseteq v(\psi)$, we have $E[V]\cap U\subseteq E[V]\cap v(\psi)$. Thus, $x_i\in\textbf{max}_R(E[V]\cap U)$. For the sake of contradiction, suppose $x_i\mathrel{\sim_S}x_j$, so $x_j\in U$ (and hence $j>i+1$ since $j>i$ and $x_{i+1}\notin U$). From  $x_i\mathrel{E}x_j$ and $x_i\in E[V]$ it follows that $x_j\in E[V]$. Therefore, $x_j\in E[V]\cap U$. We thus have
    $x_i\mathrel{R}x_{i+1}\mathrel{R}x_j$,  
    $x_i\in\textbf{max}_R(E[V]\cap U)$, $x_{i+1}\notin E[V]\cap U$, and $x_j\in E[V]\cap U$. This contradicts
    that $\mathfrak{F}=(X,R,E)$ is a descriptive \textbf{MGrz}-frame (see \Cref{enter-exit}).

    \item Suppose $\widehat{x}_i$ is introduced in the $\Diamond$-step. By \crefdefpart{vert}{vert3}, we must have ${x_i\in\textbf{smax}_R A \cap \textbf{max}_R v(\psi)}$ for some $\Diamond\psi\in S$. From $U\subseteq v(\psi)$ it follows that  $x_i\in\textbf{max}_R U$. If $x_i\mathrel{\sim_S}x_j$, then we have $x_i\mathrel{R}x_{i+1}\mathrel{R}x_j$,  $x_i\in\textbf{max}_R U$, $x_{i+1}\notin U$, and $x_j \in U$. This again contradicts that $(X,R,E)$ is a descriptive \textbf{MGrz}-frame.

    \item Suppose $\widehat{x}_i$ is introduced to ensure commutativity. Since such a point is chosen using \Cref{l-comm}, we must have $x_i\in\textbf{max}_R E[A]$. Thus, $x_i\in\textbf{max}_R E[A]\cap U$, and putting $A$ in place of $V$, the rest of the proof is identical to the first case.
\end{enumerate}

Thus, we conclude that $x_i\mathrel{\cancel{\sim}_S} x_j$ for $i \neq j$. Consequently, the length of $C$, and hence that of $\widehat{C}$, can be at most $2^{|S|}$.
\end{proof}


\begin{lem}{\label{chain2}}
    If $\widehat{C}$ is an $\widehat{R}_0$-chain \[[\widehat{y}_1]\mathrel{\widehat{R}_0}[\widehat{y}_2]\dots [\widehat{y}_{n-1}]\mathrel{\widehat{R}_0}[\widehat{y}_n]\dots\]  in $(\widehat{X}_0, \widehat{R}_0)$, then the length of $\widehat{C}$ is bounded by $2\cdot2^{|S|}$.
\end{lem}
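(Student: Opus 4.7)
My plan is to adapt the strategy of \Cref{chain1} to the $\widehat{E}$-skeleton. First, I lift the $\widehat{R}_0$-chain in $\widehat{X}_0$ to an $\widehat{R}$-chain in $\widehat{X}$: setting $\widehat{a}_1 := \widehat{y}_1$ and using the commutativity axiom of $\widehat{\mathfrak{F}}$ (recall \Cref{partial2}) inductively, I select $\widehat{a}_i \in [\widehat{y}_i]$ with $\widehat{a}_{i-1} \mathrel{\widehat{R}} \widehat{a}_i$. By \crefpart{rmk}{rmk1}{rmk3}, the corresponding $a_i \in X$ lie in pairwise distinct $E$-classes and satisfy $a_i \mathrel{Q} a_j$ and $a_i \mathrel{\cancel{E}} a_j$ whenever $i<j$.

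The key claim is that no three indices in the sequence $a_1, \dots, a_n$ share the same $\sim_S$-type; granting this, there are at most $2^{|S|}$ such types and at most two indices per type, yielding the claimed bound of $2 \cdot 2^{|S|}$. Suppose for contradiction that $a_i \sim_S a_j \sim_S a_k$ with $i < j < k$, and let $U$ be the clopen subset of $X$ of points sharing the $\sim_S$-type of $a_j$, so that $a_i, a_k \in U$. Following the three-case analysis in the proof of \Cref{chain1} ($\exists$-step, $\Diamond$-step, and commutativity-step), one shows that $a_j$ is strongly $R$-maximal in a clopen $W_j$ built from formulas in $S$, that $a_{j+1} \notin W_j \cap U$, and that $a_k \in W_j \cap U$. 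Combined with the $\widehat{R}$-arrow from $\widehat{a}_j$ to $\widehat{a}_{j+1}$ (which in $X$ produces either an $R$-step or an appropriate $Q$-step via \crefdefpart{rmk}{rmk2}) and $a_{j+1} \mathrel{Q} a_k$, this yields the exit-re-entry pattern forbidden by \Cref{enter-exit}, contradicting the strong maximality of $a_j$ in $W_j$.

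The main technical obstacle is the commutativity-step case, in which $a_j \in \textbf{smax}_R E[A]$ for a clopen $A$ that is $\sim_S$-definable but whose $E$-saturation $E[A]$ is not; extracting $a_k \in E[A]$ from $a_k \sim_S a_j$ requires the saturation observations of \Cref{sat-smax} together with the construction of $a_j$ via \crefdefpart{vert}{vert5}. A secondary subtlety is that an $\widehat{R}$-arrow between distinct $\widehat{E}$-classes in the lifted chain may correspond to a composition of drawn arrows rather than a single one, so the case analysis must be applied at the first drawn arrow along the path $\widehat{a}_j \mathrel{\widehat{R}} \widehat{a}_{j+1}$ that actually leaves $[\widehat{a}_j]$; the factor of $2$ in the stated bound absorbs precisely this slack over the propositional Grz bound $2^{|S|}$.
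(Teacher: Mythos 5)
Your reduction of the lemma to the claim that no three points of the chain share a $\sim_S$-type is exactly the paper's strategy, and the bound $2\cdot 2^{|S|}$ falls out the same way. But the mechanism you propose for proving that claim --- rerunning the three-case analysis of \Cref{chain1} on the middle point $a_j$ --- has a genuine gap. In \Cref{chain1} the argument relies on two facts that are unavailable here. First, $x_i\mathrel{E}x_j$, which is what lets one pass from $x_j\in U$ to $x_j\in E[V]\cap U$ when the clopen set in which $x_i$ is strongly maximal is an $E$-saturation. Second, genuine $R$-steps $x_i\mathrel{R}x_{i+1}\mathrel{R}x_j$, which is what makes the exit--re-entry pattern of \Cref{enter-exit} applicable. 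In your setting the $a_i$ lie in pairwise distinct $E$-classes and successive points are related only by $Q$, not $R$ (\crefdefpart{rmk}{rmk2}), so neither fact is available: in the $\exists$- and commutativity-cases the relevant clopen set has the form $E[V]\cap v(\psi)$ or $E[A]$, and $a_k\mathrel{\sim_S}a_j$ together with $a_j\mathrel{\cancel{E}}a_k$ gives no way to place $a_k$ in such a set; and \Cref{enter-exit} says nothing about $Q$-paths. You flag the commutativity case as the obstacle, but the $\exists$-case suffers from the same problem, and the cure you gesture at (\Cref{sat-smax} plus \crefdefpart{vert}{vert5}) does not supply the missing membership $a_k\in E[A]$. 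Only the case where $a_j$ is itself a vertical $\Diamond$-witness goes through, since there the set $A$ is a Boolean combination of valuations of formulas in $S$ and is therefore $\sim_S$-definable.

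The paper closes the argument by reasoning about the witness rather than about the middle point: every $\widehat{R}_0$-step in the skeleton arises from a \emph{vertical} $\Diamond$-witness, so one may choose for each $i$ a point $z_{i+1}\mathrel{E}y_{i+1}$ that is a vertical witness for $y_i$ and hence satisfies \crefdefpart{vert}{vert6}. If $y_i\mathrel{\sim_S}y_j\mathrel{\sim_S}y_k$ with $i<j<k$, then $z_{i+1}\mathrel{Q}y_j$ and $z_{i+1}\mathrel{Q}y_k$, and the contrapositive of \crefdefpart{vert}{vert6} forces $z_{i+1}\mathrel{E}y_j$ and $z_{i+1}\mathrel{E}y_k$, whence $y_j\mathrel{E}y_k$ --- contradicting the distinctness of the $E$-classes. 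No case analysis on how the points were introduced is needed. Finally, the factor of $2$ is not slack absorbed from composite arrows; it is precisely the ``at most two indices per $\sim_S$-type'' count, which this argument cannot improve to one because the contradiction requires an index strictly earlier than both repeated occurrences.
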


\begin{proof}
From  $[\widehat{y}_i]\mathrel{\widehat{R}_0}[\widehat{y}_{i+1}]$ it follows that  $\widehat{y}_i\mathrel{\widehat{Q}}\widehat{y}_{i+1}$ and $\widehat{y}_i\mathrel{\cancel{\widehat{E}}}\widehat{y}_{i+1}$ (otherwise $[\widehat{y}_i] = [\widehat{y}_{i+1}]$). 
Let $k<\omega$ be the least number for which $\widehat{y}_i\mathrel{Q_k}\widehat{y}_{i+1}$. From $\widehat{y}_i\mathrel{\cancel{\widehat{E}}}\widehat{y}_{i+1}$ it follows that $\widehat{y}_i\mathrel{\cancel{E}_k}\widehat{y}_{i+1}$.  
We may assume that $\widehat{y}_{i+1}$ is an immediate $Q_k$-successor of $\widehat{y}_i$ since otherwise we can decompose $\widehat{y}_i\mathrel{Q_k}\widehat{y}_{i+1}$ further into a chain $\widehat{y}_i\mathrel{Q_k}\widehat{t}_1\dots\widehat{t}_{n}\mathrel{Q_k}\widehat{y}_{i+1}$, where each point along the chain is indeed an immediate $Q_k$-successor, 
and extend $\widehat{C}$ to a longer chain by including the points $[\widehat{t}_i]$. This means that $[\widehat{y}_{i+1}]$ is an immediate successor of $[\widehat{y}_i]$ in $(\mathfrak{F}_k)_0$. Since 
immediate successors in $(\mathfrak{F}_k)_0$ 
are determined by $R_k^\Diamond$-arrows drawn to introduce vertical $\Diamond$-witnesses,  there must be $\widehat{y}\in E_k[\widehat{y}_i]$, $\widehat{z}\in E_k[\widehat{y}_{i+1}]$, and 
$\Diamond\psi\in W_y^\Diamond$ such that $\widehat{z}$ is a vertical $\Diamond$-witness for $\widehat{y}$ with respect to $\Diamond\psi$. Because $\widehat{y}\in E_k[\widehat{y}_i]$, $\widehat{z}\in E_k[\widehat{y}_{i+1}]$, we have $[\widehat{y}]=[\widehat{y}_i]$ and $[\widehat{y}_{i+1}]=[\widehat{z}]$.
Therefore, $[\widehat{y}]\mathrel{\widehat{R}_0}[\widehat{y}_{i+1}]$ and $[\widehat{y}_{i+1}]=[\widehat{z}]$. This means that we can choose a representative $\widehat{y}$ for each $[\widehat{y}_i]$ such that the cluster $\widehat{E}[\widehat{y}_{i+1}]$ contains a vertical $\Diamond$-witness $\widehat{z}$ for $\widehat{y}$. Thus, 
we may assume that for each $i$ there is $\widehat{z}_{i+1}\in \widehat{X}$ such that $\widehat{y}_{i+1}\mathrel{\widehat{E}}\widehat{z}_{i+1}$, $\widehat{y}_i\mathrel{\widehat{R}}\widehat{z}_{i+1}$, and $\widehat{z}_{i+1}$ is added as a vertical $\Diamond$-witness for $\widehat{y}_i$ with respect to some 
$\Diamond\psi\in S$. 

Now, $[\widehat{y}_i]\widehat{R}_0[\widehat{y}_j]\implies\widehat{y}_i\mathrel{\widehat{Q}}\widehat{y}_j\implies y_i\mathrel{Q}y_j$, where the first implication follows from the definition of $\widehat{R}_0$ and the second 
from \crefdefpart{rmk}{rmk4}.  Let $C$ be the $Q$-chain \[y_1 \mathrel{Q} y_2\dots y_{n-1} \mathrel{Q} y_n \dots\] in $X$, where $y_i \mathrel{\cancel{E}} y_j$ for $i \neq j$. Note that $y_i\mathrel{\cancel{E}}y_j$ since $\widehat{y}_i\mathrel{\cancel{\widehat{E}}}\widehat{y}_{i+1}$ (see \crefdefpart{rmk}{rmk1}).  
Also since $\widehat{y}_{i+1}\mathrel{\widehat{E}}\widehat{z}_{i+1}$, 
we have that $y_{i+1}\mathrel{E}z_{i+1}$. 
Since $\widehat{z}_{i+1}$ is a $\Diamond$-witness for $\widehat{y}_i$, 
it must satisfy \Cref{vert}. 

Define a function $f\colon C\to \wp(S)$ by
\begin{equation*}
    f(y_i)=\{\psi\in S \mid y_i\models_v\psi \}.
\end{equation*}
Let $U\in\wp(S)$. We claim that $|f^{-1}(U)|\leq 2$. Suppose $|f^{-1}(U)|\geq 3$. This implies the existence of distinct $y_i$, $y_j$, $y_k$ for which $f(y_i)=f(y_j)=f(y_k)=U$. By the definition of $f$, we must have $y_i\mathrel{\sim_S}y_j\mathrel{\sim_S}y_k$. Without loss of generality, let $i<j<k$. Since $y_{i+1}\mathrel{E}z_{i+1}$, we have $z_{i+1}\mathrel{Q}y_j$ and $z_{i+1}\mathrel{Q}y_k$. 
Also, since $y_i\mathrel{\sim_S}y_j$, we have $z_{i+1}\mathrel{E}y_j$ by \crefdefpart{vert}{vert6}. Similarly, $z_{i+1}\mathrel{E}y_k$. Therefore, $y_j\mathrel{E}y_k$, which is a contradiction to 
$y_j \mathrel{\cancel{E}} y_k$ since $j<k$. Consequently, $C$ must have at most $2\cdot 2^{|S|}$ elements. Thus, the length of $\widehat{C}$ is at most $2\cdot 2^{|S|}$.
\end{proof}

\begin{rmk}
    It is worth pointing out that \Cref{chain1,chain2} are enough to obtain the fmp of $\textbf{MGrz}$. Together, they imply that $\widehat{\mathfrak{F}}$ has finite depth with respect to $\widehat{R}$. Indeed, the map $\pi\colon\widehat{\mathfrak{F}}\twoheadrightarrow\widehat{\mathfrak{F}}_0$ associates a chain $C$ in $\widehat{\mathfrak{F}}$ to a chain $C_0$ in $\widehat{\mathfrak{F}}_0$. By \Cref{chain2}, the length of $C_0$ is bounded by $2^{|S|+1}$, and for any $[\widehat{y}]\in C_0$, the chain $\pi^{-1}[\widehat{y}]\cap C$ has at most $2^{|S|}$ points by \Cref{chain1}. Thus, $C$ may have at most $2^{2|S|+1}$ points. Therefore, $\widehat{\mathfrak{F}}$ is an $\textbf{MGrz}$-frame of finite depth, and $\widehat{\mathfrak{F}}\not\models\varphi$ by \Cref{TL}. 

    Let $\textbf{MGrz}[n]$ denote the extension of $\textbf{MGrz}$ by the finite depth formula $bd_n$ (see, e.g., \cite[Prop.~3.44]{CZ}). Since $\widehat{\mathfrak{F}}$ is an $\textbf{MGrz}[n]$-frame with $n=2^{2|S|+1}$, we have $\varphi\notin\textbf{MGrz}[n]$. By \cite[Sec.~4.10]{locally_finite}, 
    $\textbf{MGrz}[n]$ is locally tabular, 
    so $\varphi$ is refuted in a finite $\textbf{MGrz}[n]$-frame, thereby establishing the fmp of $\textbf{MGrz}$. However, it is worth noting that the constructed frame $\widehat{\mathfrak{F}}$ is indeed finite, and we furnish the details below for the reader's convenience. 
\end{rmk}

To prove that $\widehat{\mathfrak{F}}=(\widehat{X},\widehat{R},\widehat{E})$ is finite, it suffices to show that $\widehat{\mathfrak{F}}_0=(\widehat{X}_0,\widehat{R}_0)$ is a finite poset and that $\widehat{E}[\widehat{y}]$ is finite for each $[\widehat{y}]\in\widehat{X}_0$. To this end, we first have the following.



\begin{lem}\label{lem: F0 is poset}
    The frame $\widehat{\mathfrak{F}}_0=(\widehat{X}_0, \widehat{R}_0)$ is a rooted poset.
\end{lem}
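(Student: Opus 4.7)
The plan is to verify in turn that $\widehat{R}_0$ is reflexive, transitive, antisymmetric, and that $[\widehat{x}]$ (where $\widehat{x}$ is the single point of $X_0$ chosen at the outset of the construction) is a root of $\widehat{\mathfrak{F}}_0$.

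Reflexivity and transitivity of $\widehat{R}_0$ come for free from the fact that $\widehat{\mathfrak{F}}$ is an \textbf{MS4}-frame (\Cref{partial2}). Indeed, $\widehat{x}\mathrel{\widehat{R}}\widehat{x}$ and $\widehat{x}\mathrel{\widehat{E}}\widehat{x}$ give $\widehat{x}\mathrel{\widehat{Q}}\widehat{x}$, so $[\widehat{x}]\mathrel{\widehat{R}_0}[\widehat{x}]$. For transitivity, suppose $\widehat{x}\mathrel{\widehat{Q}}\widehat{y}$ and $\widehat{y}\mathrel{\widehat{Q}}\widehat{z}$; unfold to obtain $\widehat{u},\widehat{v}$ with $\widehat{x}\mathrel{\widehat{R}}\widehat{u}\mathrel{\widehat{E}}\widehat{y}\mathrel{\widehat{R}}\widehat{v}\mathrel{\widehat{E}}\widehat{z}$, and apply commutativity to $\widehat{u}\mathrel{\widehat{E}}\widehat{y}\mathrel{\widehat{R}}\widehat{v}$ to find $\widehat{w}$ with $\widehat{u}\mathrel{\widehat{R}}\widehat{w}\mathrel{\widehat{E}}\widehat{v}$, which together with transitivity of $\widehat{R}$ and $\widehat{E}$ yields $\widehat{x}\mathrel{\widehat{Q}}\widehat{z}$.

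The main obstacle is antisymmetry, and it is precisely here that the use of strongly maximal points in the selection pays off. Suppose $[\widehat{x}]\mathrel{\widehat{R}_0}[\widehat{y}]$ and $[\widehat{y}]\mathrel{\widehat{R}_0}[\widehat{x}]$. Then $\widehat{x}\mathrel{\widehat{Q}}\widehat{y}$ and $\widehat{y}\mathrel{\widehat{Q}}\widehat{x}$, so by \crefdefpart{rmk}{rmk4} we have $x\mathrel{Q}y$ and $y\mathrel{Q}x$, i.e., $x\in E_Q[y]$. Every point added to $\widehat{X}$ during the construction is strongly maximal in some clopen set of $X$ (this is built into each of the $\exists$-step, horizontal, vertical, and commutativity steps via \Cref{horz} and \crefpart{vert}{vert1}{vert5}), so by \crefdefpart{E_Q}{E_Q1} we have $E_Q[y]=E[y]$. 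Hence $x\mathrel{E}y$, and then \crefdefpart{rmk}{rmk1} gives $\widehat{x}\mathrel{\widehat{E}}\widehat{y}$, i.e., $[\widehat{x}]=[\widehat{y}]$.

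Finally, for rootedness, I will show by induction on the stage at which $\widehat{z}$ is introduced that $\widehat{x}\mathrel{\widehat{Q}}\widehat{z}$, so that $[\widehat{x}]\mathrel{\widehat{R}_0}[\widehat{z}]$ for every $[\widehat{z}]\in\widehat{X}_0$. The base case is $\widehat{z}=\widehat{x}$, handled by reflexivity. For the inductive step, each newly introduced $\widehat{z}$ is linked to some already-present $\widehat{t}$ either by $\widehat{t}\mathrel{\widehat{E}}\widehat{z}$ (in the $\exists$-step, and as part of the horizontal and vertical steps) or by $\widehat{t}\mathrel{\widehat{R}}\widehat{z}$ (in the $\Diamond$-steps and commutativity step). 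In the former case, $\widehat{x}\mathrel{\widehat{Q}}\widehat{t}\mathrel{\widehat{E}}\widehat{z}$ gives $\widehat{x}\mathrel{\widehat{Q}}\widehat{z}$ directly; in the latter, writing $\widehat{x}\mathrel{\widehat{R}}\widehat{u}\mathrel{\widehat{E}}\widehat{t}$ and using commutativity on $\widehat{u}\mathrel{\widehat{E}}\widehat{t}\mathrel{\widehat{R}}\widehat{z}$ together with transitivity of $\widehat{R}$ yields $\widehat{x}\mathrel{\widehat{Q}}\widehat{z}$ as in the transitivity argument above. This completes the proof.
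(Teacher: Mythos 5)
Your proof is correct and follows essentially the same route as the paper's: the preorder properties come from $\widehat{\mathfrak{F}}$ being an \textbf{MS4}-frame, antisymmetry is obtained by passing to $X$ via \crefdefpart{rmk}{rmk4}, invoking strong maximality of all selected points together with \crefdefpart{E_Q}{E_Q1} to get $E_Q[x]=E[x]$, and returning via \crefdefpart{rmk}{rmk1}, while rootedness follows from $\widehat{x}\mathrel{\widehat{Q}}\widehat{z}$ for all selected $\widehat{z}$. The only difference is that you spell out the transitivity and rootedness steps in more detail than the paper, which treats them as immediate.
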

\begin{proof}
    Since $\widehat{R}$ is a preorder, 
    $\widehat{Q}$ is a preorder, and hence $\widehat{R}_0$ is a preorder. To show 
    anti-symmetry, let $[\widehat{x}]\mathrel{\widehat{R}_0}[\widehat{y}]$ and $[\widehat{y}]\mathrel{\widehat{R}_0}[\widehat{x}]$. Therefore,
    $\widehat{x}\mathrel{\widehat{Q}}\widehat{y}$ and $\widehat{y}\mathrel{\widehat{Q}}\widehat{x}$, so 
    there is $k<\omega$ such that
    $\widehat{x}\mathrel{Q_k}\widehat{y}$ and $\widehat{y}\mathrel{Q_k}\widehat{x}$. By \crefdefpart{rmk}{rmk4}, $x\mathrel{Q}y$ and $y\mathrel{Q}x$, yielding that $y\in E_Q[x]$. However, $x$ and $y$ are strongly maximal points, so
    \crefdefpart{E_Q}{E_Q1} implies that $E_Q[x]=E[x]$. Thus, $y\in E[x]$, and so $\widehat{y}\mathrel{{E}_k}\widehat{x}$ by \crefdefpart{rmk}{rmk1}. Therefore, $\widehat{y}\mathrel{\widehat{E}}\widehat{x}$, and so $[\widehat{x}]=[\widehat{y}]$. That $\widehat{\mathfrak{F}}_0$ is rooted is immediate from the construction. Indeed, we started the construction with $\widehat{x}$ where $x\in\textbf{smax}_R v(\neg\varphi)$; and for each $[\widehat{y}]\in\widehat{X}_0$, 
    we have $[\widehat{x}]\mathrel{\widehat{R}_0}[\widehat{y}]$ because $\widehat{x}\mathrel{\widehat{Q}}\widehat{y}$. Thus, $[\widehat{x}]$ is the root {of~$\widehat{\mathfrak{F}}_0$}. 
\end{proof}

For a poset $(P,\leq)$, recall that the {\em branching} at a point $x\in P$ is the cardinality of the set of its immediate successors.

\begin{lem}{\label{rooted}}
    A rooted poset $(P,\leq)$ is finite iff it has finite depth and finite branching at each point.
\end{lem}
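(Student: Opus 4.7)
The forward direction is immediate: if $|P| = m$, then every chain has cardinality at most $m$, and each point has at most $m$ immediate successors. So I would focus all the work on the converse.

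Suppose $P$ has finite depth and finite branching at each point. My plan is to induct on the depth $n$ of $P$. For the base case $n = 1$, no two distinct elements are comparable, and since $P$ is rooted at some $r$, every $x \in P$ satisfies $r \leq x$, forcing $x = r$; thus $P = \{r\}$. For the inductive step, let $P$ have depth $n \geq 2$ with root $r$, and let $I = \{s_1, \ldots, s_k\}$ be the finite set of immediate successors of $r$ (finite by the branching hypothesis at $r$). For each $s_i$, set $U_i = \{x \in P \mid s_i \leq x\}$. Each $U_i$ is a rooted poset (with root $s_i$) that inherits finite branching at each point, and its depth is at most $n - 1$: any chain in $U_i$ extends by prepending $r$ to a strictly longer chain in $P$. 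The induction hypothesis then yields that each $U_i$ is finite, and hence $P$ itself is finite provided $P = \{r\} \cup U_1 \cup \cdots \cup U_k$.

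The main obstacle is establishing precisely this decomposition: given any $x \in P$ with $x \neq r$, I need to produce $s_i \in I$ with $s_i \leq x$. To do so, I would take $s$ to be a minimal element of the set $A = \{y \in P \mid r < y \leq x\}$. The existence of such a minimal element uses finite depth in an essential way, since finite depth implies the descending chain condition on $P$ and hence every nonempty subset of $P$ has a minimal element. Once $s$ is chosen, minimality forces it to be an immediate successor of $r$: if $r < z < s$, then $z \leq s \leq x$ would place $z$ in $A$, contradicting minimality of $s$. Thus $s \in I$ and $x \in U_s$, which completes the decomposition and, together with the inductive step above, the proof.
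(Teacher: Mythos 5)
Your proof is correct, and it takes a somewhat different route from the paper's. The paper argues bottom-up: starting from $C_1=\{r\}$, it repeatedly adjoins immediate successors to form sets $C_1\subseteq C_2\subseteq\cdots\subseteq C_n$, each finite by the branching hypothesis, and concludes by asserting that $C_n=P$. You instead argue top-down by induction on depth, decomposing $P$ as $\{r\}\cup U_1\cup\cdots\cup U_k$ where the $U_i$ are the principal upsets of the immediate successors of $r$, each a rooted poset of strictly smaller depth. The substantive difference is that the paper's final step $C_n=P$ --- i.e., that every element of $P$ is reached from the root by a finite chain of immediate-successor steps --- is asserted without justification, whereas you prove exactly this coverage fact explicitly: finite depth gives the descending chain condition, so the set $\{y\mid r<y\leq x\}$ has a minimal element, which by minimality must be an immediate successor of $r$. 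This is the one point in the lemma where finite depth is genuinely needed for reachability (and not merely for bounding the number of layers), so your version is the more complete argument; the paper's layering is slightly more economical notationally but leaves that step implicit. One small presentational remark: since the depth of $U_i$ may be strictly less than $n-1$, your induction should formally be on ``depth at most $n$'' (or phrased as strong induction), but this is routine.
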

\begin{proof}
    The forward implication is obvious. For the converse, suppose $P$ has depth $n$ and finite branching at each $x\in P$. Let $r$ be the root of $(P,\leq)$. We proceed by induction to define a finite subset $C_m\subseteq P$ for each $m\leq n$ such that $(C_m,\leq)$ has depth $m$. 
    Set $C_1=\{r\}$ and observe that $C_1$ is finite and $(C_1,\leq)$ has depth 1.
    Now suppose $C_k\subseteq P$ has been defined for $k<n$. By the induction hypothesis, 
    we may assume that $C_k$ is finite and that $(C_k,\leq)$ has depth $k$. We define $C_{k+1}$ to be the set obtained by adjoining the immediate successors of elements in $C_k$. Since the branching is finite at each point, we only introduce finitely many new points. Therefore, $C_{k+1}$ is finite. Moreover, adjoining immediate successors increases the length of chains by 1. Hence, $(C_{k+1},\leq)$ has depth $k+1$. 
    Thus, $C_m$ is finite and $(C_m,\leq)$ has depth $m$ for all $m\leq n$. Since $C_{n}=P$, we conclude that $P$ is finite. 
\end{proof}

In \Cref{lem: F0 is poset}, we saw that $\widehat{\mathfrak{F}}_0$ is a rooted poset whose root is $[\widehat{x}]$. We next show that $\widehat{E}[\widehat{x}]$ is finite. To this end, note that a rooted poset of depth $n$ and branching at most $k$ at each point can have at most $1+k+k^2+\cdots+k^{n-1}$ points.

\begin{lem}{\label{base-case}}
    $\widehat{E}[\widehat{x}]$ is finite.
\end{lem}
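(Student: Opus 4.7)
The plan is to apply (a mild adaptation of) \Cref{rooted} to the subposet $(\widehat{E}[\widehat{x}], \widehat{R}|_{\widehat{E}[\widehat{x}]})$, which is a poset by \Cref{partial2}. The three ingredients needed are finite depth, finite branching at every point, and a finite bound on the number of $\widehat{R}$-minimal elements (replacing the rootedness hypothesis of \Cref{rooted}, which one arranges by formally prepending a phantom root below the minimal elements).

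The first two ingredients are the easier half. By \Cref{chain1}, every $\widehat{R}$-chain contained in the single $\widehat{E}$-cluster $\widehat{E}[\widehat{x}]$ has length at most $2^{|S|}$, which bounds the depth. For branching, inspecting the construction shows that an $\widehat{R}$-arrow staying inside one $\widehat{E}$-cluster is produced only in the horizontal $\Diamond$-step; the vertical and commutativity steps contribute $\widehat{R}$-arrows exclusively across distinct $\widehat{E}$-clusters. Since the horizontal step adds (or re-uses) at most one witness per formula $\Diamond\psi \in W_y^\Diamond \subseteq S$, each $\widehat{y}\in\widehat{E}[\widehat{x}]$ has at most $|S|$ immediate $\widehat{R}$-successors inside $\widehat{E}[\widehat{x}]$.

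The hard part is rootedness: $\widehat{x}$ is not generally the $\widehat{R}$-minimum of $\widehat{E}[\widehat{x}]$, because $\exists$-step additions connect to $\widehat{x}$ by $\widehat{E}$ but carry no $\widehat{R}$-arrow to or from $\widehat{x}$. The plan is instead to bound the number of $\widehat{R}$-minimal elements. Any minimal element distinct from $\widehat{x}$ must have been introduced as an $\exists$-witness, and the $\exists$-step fires for a formula $\exists\psi\in S$ only when no existing point of the source's current $\widehat{E}$-cluster already witnesses $\psi$. A careful tracking of the cluster-merging that can occur inside the $\exists$-step (using \Cref{horz} to describe what is added, and \crefdefpart{max_smax}{max_smax2} to control the merges) should show that at most $|S|$ fresh $\exists$-witnesses are ever added to $\widehat{E}[\widehat{x}]$, giving at most $|S|+1$ $\widehat{R}$-minimal elements in total. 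The proof of \Cref{rooted} then adapts directly by induction on depth to any poset with finitely many minimal elements and finite branching, yielding the finiteness of $\widehat{E}[\widehat{x}]$. The most delicate step is this merging analysis, since merges can silently transport into $\widehat{E}[\widehat{x}]$ points that were originally created elsewhere in the construction.
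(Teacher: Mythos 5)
Your proposal is correct and follows essentially the same route as the paper: the paper also bounds $\widehat{E}[\widehat{x}]$ by noting it starts from at most $|S|+1$ points ($\widehat{x}$ plus its $\exists$-witnesses), grows only by horizontal $\Diamond$-witnesses with branching at most $|S|$, and has $\widehat{R}$-chains of length at most $2^{|S|}$ by \Cref{chain1}, giving the bound $(|S|+1)(1+|S|+\cdots+|S|^{2^{|S|}-1})$ via the same counting as in \Cref{rooted}. The cluster-merging issue you flag as delicate is in fact passed over silently in the paper's proof, so your treatment is, if anything, slightly more careful.
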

\begin{proof}
    We start the construction by taking $\widehat{x}$ along with at most $|S|$-many $\exists$-witnesses. Hence, the cluster $\widehat{E}[\widehat{x}]$ initially has at most $|S|+1$ points. Then for each such point, we add at most $|S|$-many $\Diamond$-witnesses to $\widehat{E}[\widehat{x}]$, and we continue adding points to $\widehat{E}[\widehat{x}]$ in this manner. But this process terminates 
    since, by \Cref{chain1}, 
    $\widehat{R}$-chains within $\widehat{E}$-clusters have 
    length 
    at most $2^{|S|}$. Therefore, $\widehat{E}[\widehat{x}]$ has at most as many points as the disjoint union of $(|S|+1)$-many rooted posets of depth at most $2^{|S|}$ and the branching size at each point at most $|S|$. Thus, the size of $\widehat{E}[\widehat{x}]$ is bounded by $(|S|+1)(1+|S|+|S|^2+\cdots+|S|^{2^{|S|}-1})$. 
\end{proof}

\begin{lem}{\label{clusters-branching}}
    For each $[\widehat{y}]\in\widehat{X}_0$, we have that $\widehat{E}[\widehat{y}]$ is finite and $[\widehat{y}]$ has finite $\widehat{R}_0$-branching. 
\end{lem}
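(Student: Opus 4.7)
The plan is to prove both conclusions simultaneously by induction on the $\widehat{R}_0$-depth $n$ of $[\widehat{y}]$. The induction is well-founded because \Cref{chain2} bounds the depth of $\widehat{\mathfrak{F}}_0$ by $2\cdot 2^{|S|}$.

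For the base case $n=1$, we have $[\widehat{y}]=[\widehat{x}]$, the root of $\widehat{\mathfrak{F}}_0$ by \Cref{lem: F0 is poset}. Finiteness of the cluster is \Cref{base-case}. For the branching, every $\widehat{R}_0$-successor of $[\widehat{x}]$ is of the form $[\widehat{z}]$, where $\widehat{z}$ was added as a vertical $\Diamond$-witness for some $\widehat{w}\in\widehat{E}[\widehat{x}]$; since $\widehat{E}[\widehat{x}]$ is finite and each such $\widehat{w}$ admits at most $|S|$-many vertical $\Diamond$-witnesses (one per $\Diamond\psi\in S$), the $\widehat{R}_0$-branching at $[\widehat{x}]$ is finite.

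For the inductive step $n>1$, assume the result at every $\widehat{R}_0$-predecessor of $[\widehat{y}]$. Because $\widehat{\mathfrak{F}}_0$ is rooted at $[\widehat{x}]$, has depth at most $2\cdot 2^{|S|}$, and has finite branching at every node along any path from $[\widehat{x}]$ to $[\widehat{y}]$, applying \Cref{rooted} to the principal down-set below $[\widehat{y}]$ yields that the set of $\widehat{R}_0$-predecessors of $[\widehat{y}]$ is finite. The crucial combinatorial observation is that a point enters $\widehat{E}[\widehat{y}]$ in exactly one of four ways: (i) as a vertical $\Diamond$-witness from a predecessor cluster; (ii) as a commutativity witness $\widehat{b}$ triggered by an $\widehat{R}$-arrow out of a predecessor cluster into $\widehat{E}[\widehat{y}]$; (iii) as an $\exists$-witness for an existing point of $\widehat{E}[\widehat{y}]$; or (iv) as a horizontal $\Diamond$-witness for such a point. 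Contributions (i) and (ii) are \emph{external} and are bounded by the inductive hypothesis, since there are finitely many predecessor clusters, each finite, each of their points produces at most $|S|$-many vertical $\Diamond$-witnesses, and each such vertical witness triggers only a bounded number of commutativity witnesses. Contributions (iii) and (iv) are \emph{internal}: each point contributes at most $|S|$-many $\exists$-witnesses and at most $|S|$-many horizontal $\Diamond$-witnesses, while \Cref{chain1} caps the $\widehat{R}$-depth within the cluster by $2^{|S|}$, so the counting of \Cref{base-case} extends to give a finite bound on $|\widehat{E}[\widehat{y}]|$. Finite $\widehat{R}_0$-branching at $[\widehat{y}]$ then follows exactly as in the base case.

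The principal obstacle is to verify that commutativity never enlarges a cluster from within, which would otherwise risk unbounded growth and break the depth induction. When commutativity is invoked on account of $\widehat{t}\mathrel{\widehat{E}}\widehat{u}\mathrel{\widehat{R}}\widehat{w}$ with $\widehat{u}\mathrel{\cancel{\widehat{E}}}\widehat{w}$, the new point $\widehat{b}$ is placed in $\widehat{E}[\widehat{w}]$, and antisymmetry of $\widehat{R}_0$ (\Cref{lem: F0 is poset}) forces $[\widehat{u}]$ to be a strict $\widehat{R}_0$-predecessor of $[\widehat{w}]$. Hence commutativity only feeds a cluster from its strict $\widehat{R}_0$-predecessors, and the depth induction goes through.
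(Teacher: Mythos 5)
Your proof is correct and follows essentially the same route as the paper's: a well-founded induction along $\widehat{R}_0$ (the paper phrases it via immediate predecessors, you via depth), using \Cref{rooted} to get finitely many predecessor clusters, splitting the growth of $\widehat{E}[\widehat{y}]$ into external seeds (vertical $\Diamond$-witnesses and commutativity witnesses from predecessor clusters) and internal growth ($\exists$- and horizontal $\Diamond$-witnesses) bounded via \Cref{chain1} as in \Cref{base-case}, and then bounding the branching by $|S|$ times the cluster size. Your closing observation that commutativity witnesses are only ever injected into a cluster from its strict $\widehat{R}_0$-predecessors is a point the paper leaves implicit, and making it explicit is a welcome clarification rather than a deviation.
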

\begin{proof}
    We show that if the asserted property holds for immediate predecessors of $[\widehat{y}]$, it also 
    holds for $[\widehat{y}]$. 
    By \Cref{base-case}, $\widehat{E}[\widehat{x}]$ is finite. 
     Each $\widehat{y}\in\widehat{E}[\widehat{x}]$ can contribute at most 
    $|S|$-many immediate $R_0$-successors 
    of $[\widehat{x}]$ . Therefore, $\widehat{E}[\widehat{x}]$ can have at most $|S|\cdot|\widehat{E}[\widehat{x}]|$-many immediate $\widehat{R}_0$-successors. Thus, the $\widehat{R}_0$-branching of $[\widehat{x}]$ is at most $|S|\cdot|\widehat{E}[\widehat{x}]|$, and so the claim holds for the root $[\widehat{x}]$.

    Now 
    assume that for each $[\widehat{u}]\in\widehat{R}_0^{-1}[\widehat{y}]\setminus\{[\widehat{y}]\}$ we have that $\widehat{E}[\widehat{u}]$ is finite and $[\widehat{u}]$ has finite $\widehat{R}_0$-branching. 
    Then $(\widehat{R}_0^{-1}[\widehat{y}]\setminus\{[\widehat{y}]\},\widehat{R}_0)$ is a finite poset by \Cref{rooted}.
    Let $F$ be the set of immediate $\widehat{R}_0$-predecessors of $[\widehat{y}]$. Clearly, $F$ is finite. Define
    \begin{equation*}
        N=\max\{|\widehat{E}[\widehat{u}]| \mid [\widehat{u}]\in F \}.
    \end{equation*}
    
     Let $[\widehat{u}]\in F$ and $\widehat{z}\in \widehat{E}[\widehat{u}]$. We draw at most $|S|$-many $\widehat{R}$-arrows to introduce $\Diamond$-witnesses for $\widehat{z}$. Due to commutativity, we would need to draw at most $|S|\cdot N$-many $\widehat{R}$-arrows from points in $\widehat{E}[\widehat{u}]$ to introduce $\Diamond$-witnesses for $\widehat{z}$. Hence, to add all the $\Diamond$-witnesses for points in $\widehat{E}[\widehat{u}]$, we would need to draw at most $|S|\cdot N^2$-many $\widehat{R}$-arrows. Thus, we would have to draw $|S|\cdot N^2\cdot |F|$-many $\widehat{R}$-arrows to add all the necessary $\Diamond$-witnesses for any point in $\widehat{E}[\widehat{u}]$ and $[\widehat{u}]\in F$. Therefore, the cluster $\widehat{E}[\widehat{y}]$ may start with at most $|S|\cdot N^2\cdot |F|$-many points. Then an argument similar to that in \Cref{base-case} yields that $\widehat{E}[\widehat{y}]$ has at most $(|S|+|S|\cdot N^2\cdot |F|)(1+|S|+|S|^2+\cdots+|S|^{2^{|S|}-1})$-many points; and 
     the $\widehat{R}_0$-branching of $[\widehat{y}]$ is at most $|S|\cdot|\widehat{E}[\widehat{y}]|$. 
\end{proof}

\begin{lem}{\label{finite-skeleton}}
    The frame $\widehat{\mathfrak{F}}_0=(\widehat{X}_0,\widehat{R}_0)$ is finite.
\end{lem}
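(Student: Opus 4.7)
The plan is to apply \Cref{rooted}, which characterizes finite rooted posets as exactly those of finite depth with finite branching at each point. By \Cref{lem: F0 is poset}, $\widehat{\mathfrak{F}}_0$ is a rooted poset (with root $[\widehat{x}]$), so it suffices to verify these two hypotheses.

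First, I would establish finite depth. This is already done by \Cref{chain2}, which bounds the length of any $\widehat{R}_0$-chain in $\widehat{X}_0$ by $2\cdot 2^{|S|}$. Consequently, $\widehat{\mathfrak{F}}_0$ has depth at most $2\cdot 2^{|S|}$.

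Next, I would verify finite branching at each point. This follows directly from \Cref{clusters-branching}, which asserts that every $[\widehat{y}]\in\widehat{X}_0$ has finite $\widehat{R}_0$-branching. Invoking \Cref{rooted}, we conclude that $\widehat{X}_0$ is finite.

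Since the two substantive ingredients (the depth bound and the branching bound) have already been proved in the preceding lemmas, there is no real obstacle here; the argument is a direct citation. The essential work, in particular the careful bookkeeping of cluster sizes via \Cref{base-case,clusters-branching} and the chain-length bound via \Cref{chain1,chain2}, has already been carried out, and this lemma is the clean assembly of those pieces.
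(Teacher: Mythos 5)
Your proposal is correct and matches the paper's own proof exactly: both cite \Cref{lem: F0 is poset} for rootedness, \Cref{chain2} for the depth bound, \Cref{clusters-branching} for finite branching, and then conclude via \Cref{rooted}. No differences worth noting.
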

\begin{proof}
    By \Cref{lem: F0 is poset}, $\widehat{\mathfrak{F}}_0$ is a rooted poset. 
    By \Cref{chain2}, $\widehat{\mathfrak{F}}_0$ has finite depth; and by \Cref{clusters-branching}, $\widehat{\mathfrak{F}}_0$ has finite branching at each point. Thus,  $\widehat{\mathfrak{F}}_0$ is finite by \Cref{rooted}.
\end{proof}

\begin{thm}{\label{finite}}
    $\widehat{\mathfrak{F}}$ is a finite partially ordered {\textbf{MS4}}-frame.
\end{thm}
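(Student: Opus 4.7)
The plan is to combine the three results that immediately precede the statement, all of which do the heavy lifting. By \Cref{partial2}, the frame $\widehat{\mathfrak{F}}$ is already known to be a partially ordered \textbf{MS4}-frame, so the only remaining content in the theorem is the assertion that $\widehat{X}$ is finite.

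For finiteness, I would write $\widehat{X}$ as the disjoint union of its $\widehat{E}$-equivalence classes, indexed by the quotient $\widehat{X}_0$:
\[
\widehat{X} = \bigsqcup_{[\widehat{y}]\in\widehat{X}_0} \widehat{E}[\widehat{y}].
\]
By \Cref{finite-skeleton}, the index set $\widehat{X}_0$ is finite, and by \Cref{clusters-branching}, each fiber $\widehat{E}[\widehat{y}]$ is finite. Hence $\widehat{X}$ is a finite union of finite sets, and therefore finite.

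Because of the groundwork done in the previous subsections, there is no real obstacle at this step: all the combinatorial difficulty, namely bounding chain lengths via \Cref{chain1,chain2}, controlling branching via \Cref{clusters-branching}, and verifying antisymmetry and commutativity via \Cref{partial,lem: F0 is poset}, has already been carried out. The present theorem is merely the bookkeeping that packages those bounds into the single statement that $\widehat{\mathfrak{F}}$ is a finite partially ordered \textbf{MS4}-frame, thereby supplying the finite refuting frame for $\varphi$ needed to establish the fmp.
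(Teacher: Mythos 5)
Your proposal is correct and follows the paper's own proof exactly: both cite \Cref{partial2} for the partially ordered \textbf{MS4}-frame part, \Cref{finite-skeleton} for the finiteness of $\widehat{X}_0$, and \Cref{clusters-branching} for the finiteness of each cluster $\widehat{E}[\widehat{y}]$, concluding that $\widehat{X}$ is a finite union of finite sets. The explicit disjoint-union decomposition you write out is just a slightly more detailed phrasing of the same bookkeeping.
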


\begin{proof}
By \Cref{partial2}, $\widehat{\mathfrak{F}}$ is a partially ordered \textbf{MS4}-frame. By \Cref{finite-skeleton}, $\widehat{\mathfrak{F}}_0=(\widehat{X}_0, \widehat{R}_0)$ is finite. By \Cref{clusters-branching}, $\widehat{E}[\widehat{y}]$ is finite for each $[\widehat{y}]\in\widehat{X}_0$. Thus,  $\widehat{\mathfrak{F}}$ is finite.
\end{proof}

\begin{thm}{\label{fmp}}
    {\textbf{MGrz}} has the fmp. 
\end{thm}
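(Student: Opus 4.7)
The plan is to assemble the pieces built up through \Cref{finite}. Suppose $\textbf{MGrz}\not\vdash\varphi$. By algebraic completeness (\Cref{thm: JT for monadic} together with the duality between $\textbf{MGrz}$-algebras and descriptive $\textbf{MGrz}$-frames), there is a descriptive $\textbf{MGrz}$-frame $\mathfrak{F}=(X,R,E)$ and a valuation $v$ on $\mathfrak{F}$ refuting $\varphi$. Choose $u\in v(\lnot\varphi)$, and apply \Cref{smax} to the clopen set $v(\lnot\varphi)$ to obtain $x\in Q[u]\cap\textbf{smax}_R v(\lnot\varphi)$. This gives a legitimate starting point $\widehat{x}$ for the selective filtration construction, with $x\models_v\lnot\varphi$.

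Next I would run the selective filtration construction of this section with $S=\text{Sub}(\varphi)$, producing the frame $\widehat{\mathfrak{F}}=(\widehat{X},\widehat{R},\widehat{E})$ together with its valuation $\widehat{v}$. By \Cref{finite}, $\widehat{\mathfrak{F}}$ is a finite partially ordered $\textbf{MS4}$-frame. Since any finite partial order is a Noetherian partial order, $\widehat{\mathfrak{F}}$ is automatically a finite $\textbf{MGrz}$-frame (see the comment after \Cref{M-frame}). The Truth Lemma (\Cref{TL}) then gives $\widehat{x}\models_{\widehat{v}}\lnot\varphi$, so $\widehat{\mathfrak{F}}\not\models\varphi$ under $\widehat{v}$.

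Thus every non-theorem of $\textbf{MGrz}$ is refuted on some finite $\textbf{MGrz}$-frame, which is exactly the fmp. There is no serious obstacle left at this stage: the heavy lifting has been done in \Cref{smax}, the multi-stage construction of $\mathfrak{F}_k$ and $\widehat{\mathfrak{F}}$, and the three key technical results—\Cref{TL} (truth is preserved on subformulae of $\varphi$), \Cref{partial2} (the limit frame is a partially ordered $\textbf{MS4}$-frame), and \Cref{finite} (finiteness via bounded depth, bounded branching and bounded cluster size). The final proof only needs to record the step from algebraic refutation to a refutation in $\mathfrak{F}$, invoke the construction at the strongly maximal root $\widehat{x}$, cite \Cref{finite} and \Cref{TL}, and note that finite partial orders validate \textbf{Grz}.
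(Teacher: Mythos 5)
Your proposal is correct and follows essentially the same route as the paper: take a descriptive \textbf{MGrz}-frame refuting $\varphi$, run the selective filtration construction starting from a strongly maximal point supplied by \Cref{smax}, and conclude via \Cref{finite} and the Truth Lemma (\Cref{TL}). You merely make explicit two details the paper leaves implicit, namely the choice of the root $\widehat{x}$ and the observation that a finite partial order is Noetherian and hence yields an \textbf{MGrz}-frame.
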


\begin{proof}
Suppose $\textbf{MGrz}\not\vdash\varphi$. Then there is a descriptive \textbf{MGrz}-frame $\mathfrak{F}$ and a valuation $v$ on $\mathfrak{F}$ such that $\mathfrak{F}\not\models_v\varphi$. From the above construction, we can extract a finite partially ordered \textbf{MS4}-frame $\widehat{\mathfrak{F}}$. 
By \Cref{TL}, we can define a model $(\widehat{\mathfrak{F}},\widehat{v})$ such that $\widehat{\mathfrak{F}}\not\models_{\widehat{v}}\varphi$.
\end{proof}

The following are some immediate consequences of \Cref{fmp}. 

\begin{cor}{\label{consequences}}
    \begin{enumerate}
        \item[] 
        \item \textbf{MGrz} is complete with respect to \textbf{MGrz}-frames. {\label{completeness}}
        \item \textbf{MGrz} is decidable. {\label{decidable}}
        \item \textbf{MGrz} is the monadic fragment of \textbf{QGrz}. {\label{fragment}}
    \end{enumerate}
\end{cor}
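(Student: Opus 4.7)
All three parts follow from \Cref{fmp}. For part (\labelcref{completeness}), the plan is essentially immediate: every finite \textbf{MGrz}-frame is, by definition, an \textbf{MGrz}-frame, so \Cref{fmp} already provides a countermodel from the target class for any non-theorem, and soundness is built into \Cref{M-frame}. For part (\labelcref{decidable}), I would invoke the standard Harrop argument---\textbf{MGrz} is recursively enumerable from its recursive axiomatization, while \Cref{fmp} renders the set of non-theorems r.e.\ by enumerating finite \textbf{MK}-frames, filtering to those validating the Grzegorczyk axiom (a decidable check on a finite frame), and testing whether some valuation refutes $\varphi$.

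The substantive part is (\labelcref{fragment}), for which the plan is to apply \Cref{useful} with $\textbf{L}=\textbf{Grz}$ and $\mathsf{C}$ taken to be the class of finite \textbf{MGrz}-frames. Hypothesis (1) of \Cref{useful} is exactly part (\labelcref{completeness}). For hypothesis (2), one must establish $\textbf{QGrz}\models^{+} \mathscr{B}(\mathfrak F)$ for every finite \textbf{MGrz}-frame $\mathfrak F=(X,R,E)$. Mirroring the \textbf{MS4} argument in the proof of \Cref{folk}, this reduces via \Cref{sound} to showing that the $n^{\text{th}}$ level $\mathcal X_n=(X^n,R^n)$ of $\mathscr{B}(\mathfrak F)$ is a \textbf{Grz}-frame for each $n<\omega$; since \textbf{Grz} is validated on every finite poset, it is enough to verify that $\mathcal X_n$ is a finite poset.

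The anticipated (mild) obstacle is this inheritance check. Finiteness of $X^n$ is clear from finiteness of $X$; reflexivity and antisymmetry of $R^n$ descend componentwise from those of $R$, using that $\vec x\,\textbf{sub}\,\vec x$ holds trivially; and transitivity of $R^n$ combines transitivity of $R$ with transitivity of the subordination relation \textbf{sub} (if $x_i=x_j\Rightarrow y_i=y_j$ and $y_i=y_j\Rightarrow z_i=z_j$, then $x_i=x_j\Rightarrow z_i=z_j$). Once these checks are in place, \Cref{sound} yields strong soundness of \textbf{QGrz} on $\mathscr{B}(\mathfrak F)$, and \Cref{useful} delivers the conclusion.
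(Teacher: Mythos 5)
Your proposal is correct and follows essentially the same route as the paper: parts (1) and (2) are dispatched exactly as you describe, and part (3) is an application of \Cref{useful} via \Cref{sound} by checking that every level of $\mathscr{B}(\mathfrak F)$ validates \textbf{Grz}. The only (immaterial) difference is that the paper takes $\mathsf{C}$ to be the class of \emph{all} \textbf{MGrz}-frames and argues that each level $(X^n,R^n)$ is a Noetherian poset, whereas you restrict to finite \textbf{MGrz}-frames and finite posets; both suffice. One small point you should not skip: by \Cref{nthlevel}, the $0^{\text{th}}$ level of $\mathscr{B}(\mathfrak F)$ is the skeleton $(X_0,R_0)$, not a power of $(X,R)$, so its antisymmetry does not ``descend componentwise'' from that of $R$. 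For a finite \textbf{MGrz}-frame this is still true --- it amounts to $E_Q=E$, which is \crefdefpart{E_Q}{E_Q2} --- but it needs to be said; the paper covers it by explicitly asserting that $(X_0,R_0)$ is a Noetherian poset. With that one-line addition your argument is complete.
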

\begin{proof}
(\labelcref{completeness}) This is an immediate consequence of the fmp for \textbf{MGrz}.

(\labelcref{decidable}) Since \textbf{MGrz} is finitely axiomatizable and has the fmp, \textbf{MGrz} is decidable by Harrop's theorem (see, e.g., \cite[Thm.~16.3]{CZ}).

(\labelcref{fragment}) By (\labelcref{completeness}), \textbf{MGrz} is complete with respect to the class $\mathsf{C}$ of all \textbf{MGrz}-frames. Let $\mathscr{B}(\mathsf{C})=\{\mathscr{B}(\mathfrak{F})\mid \mathfrak{F}\in\mathsf{C}\}$. We show that \textbf{QGrz} is strongly sound with respect to $\mathscr{B}(\mathsf{C})$. Let $\mathfrak{F}\in \mathsf{C}$, where $\mathfrak{F}=(X,R,E)$. We claim that $\mathscr{B}(\mathfrak{F})\models^+\hspace{-3pt}\textbf{QGrz}$. By \Cref{sound}, it suffices to show that the $n^{\text{th}}$ level of $\mathscr{B}(\mathfrak{F})$ 
is a Noetherian poset. From the definition of $\mathscr{B}$, we have $\mathscr{B}(\mathfrak{F})=((X,R),\pi_X,(X_0,R_0))$. Thus, the $n^{\text{th}}$ level of $\mathscr{B}(\mathfrak{F})$ is the frame $(X^n,R^n)$ for $n>1$, $(X,R)$ for $n=1$, and $(X_0,R_0)$ for $n=0$. Since $\mathfrak{F}$ is an \textbf{MGrz}-frame, $(X,R)$ and $(X_0,R_0)$ are Noetherian posets. The former gives that $(X^n,R^n)$ is also a Noetherian poset for $n>1$. Thus, by \Cref{useful}, \textbf{MGrz} is the monadic fragment of \textbf{QGrz}.
\end{proof}


\section{Conclusions}{\label{conclusion}}

The main contributions of this paper can be summarized as follows: 
\begin{enumerate}
    \item We proved that 
    the categories of Kripke bundles and \textbf{MK}-frames are equivalent, thus generalizing the results of \cite{szk} and \cite{BV}. 
    \item We provided a criterion to determine when a given monadic modal logic is the monadic fragment of a given predicate modal logic, thus generalizing the results of \cite{onoszk,szk}. This, in particular, yields a criterion for when the monadic extension of a propositional modal logic $\bf L$ is the monadic fragment of the predicate extension of $\bf L$. 
    \item We refined the selective filtration methods of 
    \cite{Grefe} and \cite{GBm} to show that \textbf{MGrz} has the fmp,  thus resolving in the positive the issue of completeness of \textbf{MGrz} (see \cite{esa}).  The key new ingredient is the notion of 
    a strongly maximal point of 
    a descriptive \textbf{MGrz}-frame, and an adaptation of the Fine-Esakia maximality principle. 
    As a consequence, we proved that \textbf{MGrz} 
    is the monadic fragment of \textbf{QGrz}.
\end{enumerate}


We point out that our construction, in particular, implies that the Grzegorczyk logic enriched with the universal modality also has the fmp. For a propositional modal logic \textbf{L}, we let  
\[
\textbf{L}_u=\textbf{ML}+\Diamond q\to\exists q .
\]
We refer to $\textbf{L}_u$ as ``the modal logic \textbf{L} enriched with the universal modality." A detailed study of such logics was initiated in \cite{goranko}, where it was shown (among other things) that if $\textbf{L}$ admits filtration, then so does $\textbf{L}_u$. 
Since $\textbf{Grz}$ does not admit standard filtration, the above result does not immediately yield that $\textbf{Grz}_u$ has the fmp. However, a simplified version of our construction does yield that $\textbf{Grz}_u$ has the fmp. 

Recall that an \textbf{MGrz}-frame $(X,R,E)$ is a $\textbf{Grz}_u$-frame iff $x\mathrel{R}y\implies x\mathrel{E}y$ for each $x,y\in X$. Therefore, $(X,R,E)$ is rooted iff $E=X^2$. Thus, our construction in \Cref{construction} simplifies considerably since the selection needs to be performed only within one $E$-cluster. Consequently, we obtain:

 
\begin{cor}
    $\textbf{Grz}_u$ has the fmp.
\end{cor}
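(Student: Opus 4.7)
The plan is to rerun the selective filtration construction of \Cref{construction} in the restricted setting of descriptive $\textbf{Grz}_u$-frames, where the defining condition $R\subseteq E$ causes the construction to collapse into a single $\widehat{E}$-cluster and to produce a frame validating $\Diamond q\to\exists q$.

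Suppose $\textbf{Grz}_u\not\vdash\varphi$. As with $\textbf{MGrz}$, the algebraic semantics yields a descriptive $\textbf{MGrz}$-frame $\mathfrak{F}=(X,R,E)$ refuting $\varphi$ under some valuation $v$ and satisfying, in addition, $R\subseteq E$. Because $R$ is reflexive, $R\subseteq E$ gives $Q=E\circ R=E$. In particular, for every clopen $U\subseteq X$ and $x\in U$, every $\Diamond$-successor and every $Q$-successor of $x$ lies in $E[x]$, so the conflict $x\mathrel{Q}y$ with $x\mathrel{\cancel{E}}y$ is vacuous.

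Now run the construction of \Cref{construction} verbatim, starting at $\widehat{x}$ for some $x\in Q[u]\cap\textbf{smax}_R v(\neg\varphi)$ with $u\in v(\neg\varphi)$ (supplied by \Cref{smax}). The $\exists$-step and the horizontal $\Diamond$-step proceed unchanged, and all points they introduce land in the single $\widehat{E}$-cluster of $\widehat{x}$. The vertical $\Diamond$-step, which fires only when a required witness $z$ satisfies $y\mathrel{Q}z$ and $y\mathrel{\cancel{E}}z$, never triggers, since $Q=E$ forbids this configuration; every $\Diamond$-witness is already handled horizontally. Consequently, every added $\widehat{R}$-arrow is drawn inside an $\widehat{E}$-cluster, and \crefdefpart{rmk}{rmk2} upgrades to $\widehat{R}\subseteq\widehat{E}$. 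The commutativity step then adds no new points, because $\widehat{u}\mathrel{R_k^\Diamond}\widehat{w}$ already implies $\widehat{u}\mathrel{E_k^\Diamond}\widehat{w}$, so the witness $\widehat{b}=\widehat{t}$ suffices.

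With these simplifications, the finiteness bound of \Cref{finite} and the Truth Lemma \Cref{TL} carry over unchanged, yielding a finite partially ordered $\textbf{MS4}$-frame $\widehat{\mathfrak{F}}=(\widehat{X},\widehat{R},\widehat{E})$ that refutes $\varphi$; indeed, the only $\widehat{E}$-cluster to be built is that of $\widehat{x}$, so the depth and branching estimates of Lemmas \Cref{chain1}, \Cref{base-case}, and \Cref{clusters-branching} reduce to the single-cluster case. Since $\widehat{R}\subseteq\widehat{E}$, the frame $\widehat{\mathfrak{F}}$ validates $\Diamond q\to\exists q$ and is thus a $\textbf{Grz}_u$-frame, giving the fmp. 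The main obstacle is bookkeeping: one must carefully verify, step by step, that the hypothesis $Q=E$ indeed suppresses every cross-cluster operation and that the arrows retained preserve both the $\textbf{MGrz}$-frame conditions and $\widehat{R}\subseteq\widehat{E}$; all of this, however, is straightforward given the reduction $Q=E$.
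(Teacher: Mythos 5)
Your proposal is correct and follows essentially the same route as the paper: the paper likewise observes that the defining condition $R\subseteq E$ forces the entire selection to take place within a single $E$-cluster (so that, in particular, the vertical and commutativity steps become vacuous), and then invokes the simplified construction. You merely spell out in more detail the reduction $Q=E$ and the resulting collapse, which the paper leaves as a brief remark.
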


We next observe that the criterion developed in \Cref{translation,useful} can be applied to a variety of mm- and pm-logics. Below we give some additional examples. 

It is well known (see, e.g., \cite[Sec.~3.9]{CZ}) that both \textbf{IPC} and \textbf{Grz} embed faithfully into the provability logic \textbf{GL}, and hence admit provability interpretations. 
Japaridze \cite{japaridze88,japaridze} 
proved that Solovay's translation \cite{solovay} 
of \textbf{GL} into Peano Arithmetic
extends to \textbf{MGL}. 
In doing so, he proved that \textbf{MGL} 
has the fmp. 

However, \textbf{MIPC} and \textbf{MGrz} no longer embed faithfully into \textbf{MGL}. To get a faithful embedding, we need to work with proper extensions of these systems (see \cite{GBm}). Let
\begin{align*}
    &\textbf{M}^+\textbf{IPC}=\textbf{MIPC}+\forall((p\to\forall p)\to\forall p)\to\forall p,\\
    &\textbf{M}^+\textbf{Grz}=\textbf{MGrz}+\Box\forall(\Box(\Box p\to \Box\forall p)\to\Box\forall p)\to\Box\forall p.
\end{align*}

%
%
%
Semantic characterizations of $\textbf{M}^+\textbf{IPC}$ and $\textbf{M}^+\textbf{Grz}$ are given in \cite{GBm}.
This yields the following characterization of finite $\textbf{M}^+\textbf{Grz}$-frames: a finite \textbf{MK}-frame $\mathfrak F=(X,R,E)$ is an  $\textbf{M}^+\textbf{Grz}$-frame iff $R$ is a partial order such that $x\mathrel{E}y$ and $x\mathrel{R}y$ imply $x=y$ for all $x,y\in X$. 
A similar characterization yields that $\mathfrak F$ is a finite \textbf{MGL}-frame iff $R$ is a strict partial order such that $x\mathrel{E}y$ 
implies $x \mathrel{\cancel{R}} y$ for all $x,y\in X$. This explains why it is $\textbf{M}^+\textbf{Grz}$ and not \textbf{MGrz} that embeds faithfully into \textbf{MGL}. 

Let $\textbf{Q}^+\textbf{Grz}=\textbf{QGrz}+\Box\forall x(\Box(\Box p(x)\to \Box\forall xp(x))\to\Box\forall xp(x))\to\Box\forall xp(x)$. As was pointed out in \cite[Rem.~5.19]{GBm}, $\textbf{M}^{+}\textbf{IPC}$ axiomatizes the monadic fragment of the corresponding predicate intuitionistic logic. That the same is true for $\textbf{M}^{+}\textbf{Grz}$ and $\textbf{MGL}$ 
is a consequence of 
\Cref{translation} and \Cref{useful}: 



%
%

\begin{samepage}
\begin{cor}{\label{MGL_M+Grz}}
    \begin{enumerate}
        \item[] 
        \item $\textbf{MGL}$ is the monadic fragment of $\textbf{QGL}$. \label{folk3}
        \item $\textbf{M}^{+}\textbf{Grz}$ is the monadic fragment of $\textbf{Q}^{+}\textbf{Grz}$. \label{folk4}
    \end{enumerate}
\end{cor}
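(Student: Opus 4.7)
My plan is to deduce each of (\labelcref{folk3}) and (\labelcref{folk4}) by verifying the hypotheses of \Cref{translation}; for part~(\labelcref{folk3}) this can be streamlined to \Cref{useful}, while for part~(\labelcref{folk4}) one must invoke \Cref{translation} directly, since the defining axiom of $\textbf{M}^{+}\textbf{Grz}$ is not a theorem of any underlying propositional modal logic. In both cases I take $\mathsf{C}$ to be the class of finite frames of the given mm-logic; this supplies the completeness hypothesis, for $\textbf{MGL}$ by Japaridze's fmp \cite{japaridze88,japaridze} and for $\textbf{M}^{+}\textbf{Grz}$ by the fmp of \cite{GBm}.

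For part~(\labelcref{folk3}), only the strong soundness of $\textbf{QGL}$ with respect to $\{\mathscr{B}(\mathfrak{F})\mid \mathfrak{F}\in\mathsf{C}\}$ remains to be checked, and this mirrors \Cref{consequences}(\labelcref{fragment}). Since a finite $\textbf{MGL}$-frame $(X,R,E)$ has $R$ a strict partial order, \Cref{sound} reduces the task to showing that each $n^{\text{th}}$ level $(X^n,R^n)$ of $\mathscr{B}(\mathfrak{F})$ is a $\textbf{GL}$-frame: transitivity and irreflexivity of $R^n$ are inherited pointwise from $R$ together with the transitivity of the subordination relation, and finiteness of $X$ forces $X^n$ to be finite, hence conversely well-founded. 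A short argument using commutativity and the frame condition $x\mathrel{E}y \Rightarrow x\mathrel{\cancel{R}}y$ gives the analogous statement for the base level $(X_0,R_0)$.

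For part~(\labelcref{folk4}), condition~(\labelcref{trans1}) of \Cref{translation} is established by induction on derivation length in $\textbf{M}^{+}\textbf{Grz}$, following the template of the proof of \Cref{useful}; the only new case is that the $\textbf{M}^{+}\textbf{Grz}$ axiom translates, after unfolding $(\forall p)^t = \forall x\,p^*(x)$, exactly to the defining extra axiom of $\textbf{Q}^{+}\textbf{Grz}$. For condition~(\labelcref{trans3}), the $\textbf{QGrz}$-fragment of $\textbf{Q}^{+}\textbf{Grz}$ is strongly sound on $\mathscr{B}(\mathfrak{F})$ by the argument of \Cref{consequences}(\labelcref{fragment}) applied to the finite $\textbf{MGrz}$-frame $\mathfrak{F}$. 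Validity of the extra axiom $\alpha^t$ in $\mathscr{B}(\mathfrak{F})$ is immediate from \Cref{t_thm} and $\mathfrak{F}\models\alpha$; to upgrade to strong validity, I observe that $\alpha^t$ contains only the single monadic predicate $p^*$, so that any substitution instance $\alpha^t[\psi/p^*(x)]$, after fixing an assignment to the extra free variables of $\psi$ under a valuation $I$, corresponds via the induced extension of $\psi$ and the correspondence of \Cref{models} to the validity of $\alpha$ under some $\textbf{MK}$-frame valuation on $\mathfrak{F}$, which holds by assumption on $\mathfrak{F}$.

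The main obstacle is this last step: substitutions by formulas carrying additional free variables raise the usual inheritor issues of Kripke bundle semantics, so the reduction to an $\textbf{MK}$-frame valuation via \Cref{models} requires some care. The monadicity of $p^*$ is what makes the argument work, because once the extra free variables of $\psi$ are instantiated, the extension of $\psi$ at each world is just a subset of the fiber, which is precisely the datum of an $\textbf{MK}$-frame valuation.
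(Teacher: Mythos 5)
Your part~(\labelcref{folk3}) is correct and essentially parallel to the paper's, though you take a slightly different route: you work with the class of \emph{all} finite \textbf{MGL}-frames and verify strong soundness of $\textbf{QGL}$ level-by-level via \Cref{sound}, which works because every axiom of $\textbf{QGL}$ beyond $\textbf{QK}$ is propositional. The paper instead invokes Japaridze's sharper completeness result: \textbf{MGL} is complete with respect to those finite \textbf{MGL}-frames $\mathfrak{F}$ for which $\mathscr{B}(\mathfrak{F})$ is a \emph{predicate Kripke frame}, on which validity automatically coincides with strong validity. Both arguments go through for~(\labelcref{folk3}).

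For part~(\labelcref{folk4}) there is a genuine gap, and it sits exactly where you flag the ``main obstacle.'' The defining axiom of $\textbf{Q}^{+}\textbf{Grz}$ is a genuinely predicate formula, so \Cref{sound} does not apply, and you must show that \emph{every substitution instance} $\alpha^t[\psi/p^*(x)]$ is valid in $\mathscr{B}(\mathfrak{F})$. Your proposed reduction --- fix the extra free variables of $\psi$, read off ``the extension of $\psi$ at each world'' as a subset of the fiber, and feed this to \Cref{models} --- is not well defined on a general Kripke bundle: the extra parameters of $\psi$ are instantiated at the world of evaluation, but as one passes through the $\Box$'s occurring in $\alpha^t$ those parameters must be replaced by inheritors, which need not be unique in $\mathscr{B}(\mathfrak{F})$ when $\mathfrak{F}$ is an arbitrary finite $\textbf{M}^{+}\textbf{Grz}$-frame. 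The monadicity of $p^*$ controls only the substituted position, not the side variables of $\psi$, so it does not dissolve the inheritor problem (compare the paper's own example of $\Diamond p\to p$ versus $\Diamond P(x)\to P(x)$). The paper's proof avoids this entirely by using the stronger completeness result of \cite{GBm}: $\textbf{M}^{+}\textbf{Grz}$ is complete with respect to those finite $\textbf{M}^{+}\textbf{Grz}$-frames $\mathfrak{F}$ for which $\mathscr{B}(\mathfrak{F})$ is a predicate Kripke frame; there validity implies strong validity, and plain validity of the extra axiom follows because it is the G\"{o}del translation of the Casari formula, which holds in every Noetherian (in particular every finite) predicate Kripke frame. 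To repair your argument, replace your choice of $\mathsf{C}$ by this subclass (mere fmp is not enough here), or else supply an actual proof that $\mathscr{B}(\mathfrak{F})$ strongly validates the extra axiom for arbitrary finite $\textbf{M}^{+}\textbf{Grz}$-frames --- a claim that is not established by the sketch as written.
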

\end{samepage}
    
\begin{proof}
(\labelcref{folk3}) By \cite[Lem.~9]{japaridze}, 
\textbf{MGL} is 
complete with respect to those finite \textbf{MGL}-frames $\mathfrak{F}$ for which $\mathscr{B}(\mathfrak{F})$ is a predicate Kripke frame (see, e.g., \cite[Def.~3.2.2]{GSS} for the definition). 
Since those are \textbf{QGL}-frames, the result follows from \Cref{useful}.

(\labelcref{folk4}) By \cite{GBm}, $\textbf{M}^+\textbf{Grz}$ is complete with respect to those finite $\textbf{M}^+\textbf{Grz}$-frames $\mathfrak{F}$ for which $\mathscr{B}(\mathfrak{F})$ is a predicate Kripke frame.  
We claim that each such $\mathscr{B}(\mathfrak{F})$ is a $\textbf{Q}^+\textbf{Grz}$-frame. 
The defining axiom of $\textbf{Q}^+\textbf{Grz}$ is the G\"{o}del translation of the Casari formula (see, e.g., \cite[p.~59]{ono}, where the formula is denoted by $W^*$), which
holds in every Noetherian predicate Kripke frame (see, e.g., \cite[Thm.~3(2)]{ono}).
Therefore, its G\"{o}del translation also holds in every such viewed as a \textbf{QS4}-frame. But the Noetherian condition holds trivially in each
$\mathscr{B}(\mathfrak{F})$ since these are finite. Thus, $\mathscr{B}(\mathfrak{F})\models \textbf{Q}^+\textbf{Grz}$, which implies that $\mathscr{B}(\mathfrak{F})\models^+\hspace{-2.8pt}\textbf{Q}^+\textbf{Grz}$ 
since $\mathscr{B}(\mathfrak{F})$ is a predicate Kripke frame.
It is also clear that $\textbf{M}^+\textbf{Grz}\vdash\varphi$ implies $\textbf{Q}^+\textbf{Grz}\vdash\varphi^t$ for each $\mathcal{L}_\exists$-formula $\varphi$ since the defining axiom of  $\textbf{Q}^+\textbf{Grz}$ is the translation of the defining axiom of  $\textbf{M}^+\textbf{Grz}$. 
Consequently, \Cref{translation} is applicable, and the result follows.  
\end{proof}

%
Our construction also yields the fmp for $\textbf{M}^+\textbf{Grz}$ since the only adjustment would be the omission of the \textbf{Horizontal Step} when adding $\Diamond$-witnesses. Thus, the main result in \cite[Sec.~6]{GBm} can be seen as a specific consequence of the selection technique we have developed here. This technique can further be adapted to show that \textbf{MGL} has the fmp: Given a descriptive \textbf{MGL}-frame, we can proceed by selecting irreflexive maximal points. For any $x$, $y$ in the $E$-cluster of an irreflexive maximal point, we have $x \mathrel{\cancel{R}} y$.
Therefore, 
we no longer need the \textbf{Horizontal Step} to introduce $\Diamond$-witnesses. A similar counting argument as in the case for \textbf{MGrz} shows that chains along the selected points are of finite length. 
Consequently, we obtain: 
\begin{cor}
\begin{enumerate}
    \item[] 
    \item \cite[Thm. 6.16]{GBm} $\textbf{M}^+\textbf{Grz}$ has the fmp.
    \item \cite[Lem. 9]{japaridze} \textbf{MGL} has the fmp.
\end{enumerate}
\end{cor}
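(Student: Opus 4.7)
The plan is to rerun the selective filtration of \Cref{construction} with a single targeted modification in each case, starting from a descriptive frame refuting the given non-theorem $\varphi$ of the corresponding logic.

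For $\textbf{M}^+\textbf{Grz}$, the semantic characterization recalled above says that a finite $\textbf{MK}$-frame is an $\textbf{M}^+\textbf{Grz}$-frame iff $R$ is a partial order in which $x \mathrel{E} y$ and $x \mathrel{R} y$ force $x=y$. Since no non-trivial $R$-arrow can lie inside an $E$-cluster, the \textbf{Horizontal Step} of the construction in \Cref{construction} produces no witnesses and may be omitted; all $\Diamond$-witnesses are then supplied by the \textbf{Vertical Step}. The remaining ingredients---strongly maximal points (\Cref{smax}), the $\exists$-\textbf{Step}, the \textbf{Commutativity Step}, the Truth Lemma (\Cref{TL}), and the branching and depth bounds in \Cref{chain2}, \Cref{clusters-branching}, and \Cref{finite-skeleton}---are reused without change, while \Cref{chain1} becomes trivial because $\widehat{R}$-chains inside $\widehat{E}$-clusters reduce to singletons. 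The resulting finite frame satisfies the defining axiom of $\textbf{M}^+\textbf{Grz}$ because every new $R_k$-arrow is introduced by the \textbf{Vertical} or \textbf{Commutativity Step}, each of which only connects distinct $E_k$-clusters.

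For $\textbf{MGL}$, the input is a descriptive $\textbf{MGL}$-frame $(X,R,E)$, where $R$ is transitive and converse well-founded; the finite $\textbf{MGL}$-frame characterization further gives $x \mathrel{E} y \Rightarrow x \mathrel{\cancel{R}} y$. I would select irreflexive maximal points in place of the strongly maximal points of \Cref{strongly_maximal}; their existence in every nonempty clopen set follows from the standard $\textbf{GL}$ maximality principle, and the monadic strong-maximality property is obtained by mimicking the commutativity argument in the proof of \Cref{smax}. The \textbf{Horizontal Step} is again unnecessary and is omitted, while the $\exists$-\textbf{Step}, \textbf{Vertical Step}, and \textbf{Commutativity Step} proceed as before with \Cref{vert} reformulated for irreflexive witnesses. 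The chain bound of \Cref{chain1} is trivial, and the bound of \Cref{chain2} is replaced by the classical $\textbf{GL}$ counting argument: along a strictly ascending $\widehat{R}$-chain of selected witnesses, the $S$-type of consecutive points must differ, giving length at most $2^{|S|}$. The remaining finiteness arguments of \Cref{base-case}, \Cref{clusters-branching}, and \Cref{finite-skeleton} carry over, and the Truth Lemma applies without change.

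The main obstacle is to check that dropping the \textbf{Horizontal Step} does not break antisymmetry or irreflexivity of the selected relation, and that the \textbf{Commutativity Step} still terminates under the modified witness selection. For $\textbf{M}^+\textbf{Grz}$ this is immediate from \Cref{partial}: the within-cluster case of antisymmetry is vacuous once the \textbf{Horizontal Step} is dropped, and the between-cluster case is handled as before. For $\textbf{MGL}$ one must verify that the vertical witnesses remain irreflexive and maximal in the required sense, so that no $\widehat{R}$-loop is created; this follows from the adapted form of \Cref{vert}. Once these verifications are in place, the finite partially ordered (respectively, strictly ordered) $\textbf{MS4}$-frame extracted from the construction validates the defining axioms of $\textbf{M}^+\textbf{Grz}$ (respectively, $\textbf{MGL}$) and refutes $\varphi$, yielding the fmp in both cases.
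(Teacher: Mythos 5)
Your proposal follows essentially the same route as the paper: for $\textbf{M}^+\textbf{Grz}$ the paper likewise observes that the only adjustment to the construction of \Cref{construction} is omitting the \textbf{Horizontal Step}, and for $\textbf{MGL}$ it likewise selects irreflexive maximal points, drops the \textbf{Horizontal Step}, and appeals to the standard \textbf{GL} counting argument for chain lengths. The paper's own justification is given at the same level of detail (a sketch in the concluding section), so your elaboration of which lemmas carry over and which become trivial is consistent with, and slightly more explicit than, the argument given there.
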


An important extension of \textbf{MGrz} is obtained by adding the monadic version of the Barcan formula:
\[
\textbf{MGrzB}=\textbf{MGrz}+\Diamond\exists p\to \exists\Diamond p.
\]
The monadic Barcan formula $\Diamond\exists p\to \exists\Diamond p$ is valid on an \textbf{MK}-frame $\mathfrak F=(X,R,E)$ iff 
\[
(\forall x,y,z\in X) \, (x \mathrel{E} z \mbox{ and } y \mathrel{R} z \Longrightarrow \exists u \in X : u \mathrel{R} x \mbox{ and } u \mathrel{E} y).
\]
\[\begin{tikzcd}
	x && z \\
	\\
	u && y
	\arrow["E"{description}, tail reversed, from=1-1, to=1-3]
	\arrow["R"{description}, dashed, from=3-1, to=1-1]
	\arrow["E"{description}, dashed, tail reversed, from=3-1, to=3-3]
	\arrow["R"{description}, from=3-3, to=1-3]
\end{tikzcd}\]

In other words, $\mathfrak F \models \Diamond\exists p\to \exists\Diamond p$ iff $E\circ R=R\circ E$. We have the following proper inclusions
%
%
{
\begin{equation*}  \textbf{MGrz}\subset\textbf{MGrzB}\subset\textbf{Grz}_u.
\end{equation*}
}
As we have seen, both \textbf{MGrz} and $\textbf{Grz}_u$ possess the fmp. However, it is not immediately clear whether $\textbf{MGrzB}$ also has the fmp. Indeed, our construction would require an appropriate adjustment
to ensure that the condition $E\circ R=R\circ E$ is satisfied. The challenge is twofold --- firstly, the necessary arrows for backward commutativity (as defined above) should be introduced through strongly maximal points; secondly, it must be shown that this additional step does not affect the termination of the selection process. 

In view of the above, it is worth considering whether the systems $\textbf{M}^+\textbf{GrzB}$ and $\textbf{MGLB}$, obtained by adding the Barcan formula to $\textbf{M}^+\textbf{Grz}$ and $\textbf{MGL}$, also have the fmp. More generally, it is desirable to develop a selective filtration technique for mm-logics that contain the monadic Barcan formula.

\section*{Acknowledgments}

We are thankful to the two 
reviewers and Luca Carai for careful reading and 
constructive comments, which have improved both the clarity and presentation of the manuscript.

\printbibliography 

\end{document}